\def\G_m{{\bf G}_m}
\def\e{{\varepsilon}}
\let\G\Gamma
\let\w\omega
\def\Gal{\mathop{\rm Gal}\nolimits}
\def\Hom{\mathop{\rm Hom}\nolimits}
\def\id{\mathop{\rm id}\nolimits}
\def\ind{\mathop{\rm ind}\nolimits}
\def\id{\mathop{\rm id} \nolimits}
\def\lim{\mathop{\rm lim} \nolimits}
\def\inf{\mathop{\rm inf} \nolimits}
\def\Gal{\mathop{\rm Gal}\nolimits}
\def\choosenot#1#2{\setbox0=\hbox{$\mathsurround0pt#1#2$}\rlap{\hbox
to\wd0{\hfil$\mathsurround0pt#1/$\hfil}}\hbox{$\mathsurround0pt#1#2$}}
\def\not#1{\mathchoice{\choosenot{\displaystyle}{#1}}{\choosenot{\textstyle}{#1}}{\choosenot{\scriptstyle}{#1}}{\choosenot{\scriptscriptstyle}{#1}}}
\begin{document}

\centerline {\bf De $GL(2,F)$ \`a  $\Gal_{\bf Q_{p}}$}

\bigskip 
\centerline{Marie-France Vigneras}

\bigskip
\centerline{Paris Juillet 2009, r\'evis\'e septembre 2009}


\section{Introduction} Soit $F$ une extension finie de $\bf Q_{p}$ de corps r\'esiduel  un corps fini $\bf F_{q} $ \`a $q$ \'el\'ements, ou $F= {\bf F}_{q} ((T))$.   Soit $L$ une extension finie de $\bf Q_{p}$, d'anneau des entiers $o$ et de corps r\'esiduel $k$. 
Notons  $\Gal_{F}$ le groupe de Galois absolu de $F$ et   ${\cal M}_{o}(G)$ la cat\'egorie ab\'elienne des repr\'esentations lisses de  $G:=GL(2,F)$ sur des $o$-modules. 

En utilisant la th\'eorie des $(\varphi, \Gamma)$-modules, 
Colmez a  d\'efini un foncteur  exact  contravariant de la cat\'egorie des repr\'esentations lisses de $GL(2, {\bf Q_{p}})$ sur $o$ de longueur finie ayant un caract\`ere central (donc admissibles) dans la cat\'egorie ${\cal M}_{o}^{f}(\Gal _{\bf Q_{p}})$  des  repr\'esentations  continues de $\Gal_{\bf Q_{p}}$ dans des $o$-modules finis, dont il a d\'eduit la correspondance de Langlands par d\'eformation pour $\Gal_{\bf Q_{p}}$ et $GL(2, {\bf Q_{p}})$.  

Une g\'en\'eralisation de ce foncteur a \'et\'e donn\'ee lorsque l'on remplace $G$ par un groupe r\'eductif $p$-adique d\'eploy\'e sur $\bf Q_{p}$ de centre connexe dans \cite{SVig}.

 Ici, nous ne supposons pas $ \bf Q_{p}=F$ et nous allons construire un foncteur  d'une sous-cat\'egorie de   ${\cal M}_{o}(G)$ vers la cat\'egorie des  $(\varphi, \Gamma)$-modules.

On dit que $V\in {\cal M}_{o}(G)$ admet une pr\'esentation finie (relative \`a $KZ$), s'il existe
 une suite exacte dans  la sous-cat\'egorie ${\cal M}_{o}^{tf}(G)$ des repr\'esentations de type fini dans ${\cal M}_{o}(G)$,
$$0 \to R_{V}(V_{0}) \to \ind_{KZ}^{G}(V_{0}) \to V \to 0 \quad , $$
o\`u  $V_{0}\subset V|_{KZ}$, $Z$ le centre de $G$, $K=GL(2,O_{F})$ et $\ind_{KZ}^{G}$ est le foncteur d'induction compacte de $KZ$ \`a $G$. Notons $ {\cal M}_{o-tor}(G)$ la sous-cat\'egorie des repr\'esentations de $ {\cal M}_{o}(G)$ sur des $o$-modules de torsion et 
$$ {\cal M}_{o-tor}^{apf}(G) \quad,\quad  {\rm resp.} \quad  {\cal M}_{o-tor}^{alf}(G) \quad, 
$$
les sous-cat\'egories   des repr\'esentations  de $G$ admissibles de pr\'esentation finie, resp.  admissible s de longueur finie, dans $ {\cal M}_{o-tor}(G)$. 

Enon\c{c}ons  le th\'eor\`eme principal.

\begin{theorem}\label{0}  1) Une repr\'esentation dans $ {\cal M}_{o-tor}(G)$  est admissible et de pr\'esentation finie, si et seulement si elle est de longueur finie et ses sous-quotients irr\'eductibles sont admissibles et de pr\'esentation finie.

2)  Supposons que $F$ soit  une extension finie de $\bf Q_{p}$.  On peut d\'efinir un foncteur contravariant (th. \ref{th3}, cor. \ref{cor5})
$${\cal V} \quad : \quad {\cal M}_{o-tor}^{apf}(G) \quad \to \quad {\cal M}_{o}^{f}(\Gal _{\bf Q_{p}}) \quad . $$

3) Lorsque $F={\bf Q_{p}}$ les cat\'egories $  {\cal M}_{o-tor}^{apf}(GL(2,{\bf Q_{p}})) , 
 {\cal M}_{o-tor}^{alf}(GL(2,{\bf Q_{p}}))$ sont \'egales, le foncteur 
   contravariant $${\cal V}\quad : \quad {\cal M}_{o-tor}^{alf}(GL(2,{\bf Q_{p}})) \quad \to \quad {\cal M}_{o}^{f}(\Gal _{\bf Q_{p}})$$
est exact, et coincide sur les repr\'esentations ayant un caract\`ere central, avec le foncteur de Colmez.
 \end{theorem}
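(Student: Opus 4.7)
The plan is to establish the three assertions in order, each building on the structural analysis of representations admitting a finite presentation relative to $KZ$.

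For part (1), the nontrivial direction is that an admissible representation of finite presentation is of finite length with subquotients of the same type. I would start from the defining exact sequence $0 \to R_{V}(V_{0}) \to \ind_{KZ}^{G}(V_{0}) \to V \to 0$ and use admissibility to reduce to the case where $V_{0}$ is $o$-finite inside $V|_{KZ}$. Since $V$ is $o$-torsion admissible, $V^{K_{n}}$ is $o$-finite for every principal congruence subgroup $K_{n}$, which should give uniform control on the module structure over the spherical Hecke algebra and hence finite length by a noetherianity argument. The converse (stability under extensions within the torsion admissible category) is a diagram chase: admissibility is extension-stable, and given $0 \to V' \to V \to V'' \to 0$ one lifts a presentation of $V''$ and combines it with one of $V'$ to obtain a presentation of $V$.

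For part (2), the construction of ${\cal V}$ proceeds in two steps. First, attach to $V$ a module over a suitable Iwasawa algebra by taking invariants under the pro-$p$ part of an Iwahori subgroup; the relevant operators come from the ${\bf Q_{p}}$-submonoid generated by $\diag(p,1)$ and $\diag({\bf Z}_{p}^{\times}, 1)$ in $G$, with $p \in {\bf Q_{p}} \subset F$ providing $\varphi$ and the units providing $\Gamma$. This is where $\Gal_{{\bf Q_{p}}}$ (rather than $\Gal_{F}$) enters: only the ${\bf Q_{p}}$-structure of $F$ is used. Second, invoke Fontaine's equivalence between \'etale $(\varphi,\Gamma)$-modules over $o$ and finite continuous $o$-linear representations of $\Gal_{{\bf Q_{p}}}$. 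Finiteness is forced by admissibility; the finite presentation hypothesis is what makes the construction functorial by allowing a compatible choice of $V_{0}$ and of $R_{V}(V_{0})$.

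For part (3), the equality ${\cal M}_{o-tor}^{apf}(GL(2,{\bf Q_{p}})) = {\cal M}_{o-tor}^{alf}(GL(2,{\bf Q_{p}}))$ reduces by part (1) to the fact that every irreducible admissible $o$-torsion representation of $GL(2, {\bf Q_{p}})$ is of finite presentation, which follows from the Barthel--Livn\'e--Breuil classification: supersingular, principal series and special representations all admit explicit presentations. Exactness of ${\cal V}$ in this setting is inherited from the exactness of Colmez's construction on the $(\varphi,\Gamma)$-module side, and the coincidence with Colmez's functor on representations with central character is obtained by comparing the two constructions on the compactly induced building blocks and extending by exactness together with finite presentation.

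The main obstacle is part (2): producing a $(\varphi,\Gamma)$-module of the correct size when $F \neq {\bf Q_{p}}$, where neither the Iwahori--Hecke framework of the split case in \cite{SVig} nor Colmez's original construction applies directly. One has to control the interaction between the $KZ$-presentation of $V$ and the ${\bf Q_{p}}$-submonoid chosen inside $G$, and then verify \'etaleness of the resulting object in Fontaine's sense so that the output lies in $\Gal_{{\bf Q_{p}}}$-representations rather than merely in modules over the Iwasawa ring.
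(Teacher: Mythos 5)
Your proposal diverges from the paper in ways that matter, most seriously in parts 1) and 2).

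For part 1), the idea of deducing finite length from ``uniform control over the spherical Hecke algebra and a noetherianity argument'' does not work and is in fact the crux of the difficulty: for $F\neq\mathbf{Q}_p$ it is not known that every admissible irreducible mod-$p$ representation of $GL(2,F)$ has finite presentation, and one cannot read off finite length from $V^{K_n}$ being $o$-finite together with a Hecke-module structure. The paper instead makes essential use of Hu's theorem (Th.~\ref{Hu}): after reducing by scalar extension to the case where the subquotients of $V_0$ have a central character, it runs an induction along $V_0$ using the equivalence between ``$\Delta_V(V_0)$ finite'' and ``$R_V(V_0)$ of finite type'' and the fact that an admissible quotient of $\ind_{KZ}^G(V_0)$ with $V_0$ irreducible and $\Delta$ finite is of finite length. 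Your proposal contains no analogue of the $\Delta_V(V_0)$-analysis, and without it the argument has a genuine hole.

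For part 2), the construction you sketch --- take invariants under the pro-$p$ Iwahori, then act by a $\mathbf{Q}_p$-submonoid --- is essentially the $V^{I_1}$-style construction of the split-$\mathbf{Q}_p$ situation and is precisely what does \emph{not} transfer to $F\neq\mathbf{Q}_p$. The paper's construction is different: it forms $D_V(V_0)=\langle B^+V_0\rangle/\Delta_V(V_0)$ (the $B^+$ of $F$, with $t=\diag(p_F,1)$), establishes its independence of $V_0$ and \'etaleness only modulo the Serre subcategory $\mathcal{M}_o^{tf}(B_0Z)$ via the technical theorem, passes to the Pontryagin dual $D_V(V_0)^*$ which is a $\Lambda_o(U_0)$-module with a $T^+(F)$-action, and then uses the \emph{trace} $\mathrm{tr}:O_F\to\mathbf{Z}_p$ to get a map $\Lambda_o(U_0)\to\Lambda_o(U(\mathbf{Z}_p))\to O_{\mathcal{E}}$. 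Tensoring $O_{\mathcal{E}}\otimes_{\Lambda_o(U_0)}(-)$ kills the finite-type ambiguity and, by restriction to $T^*(\mathbf{Q}_p)$, lands in \'etale $(\varphi,\Gamma)$-modules. Your proposal omits both the trace map (the actual bridge from $F$ to $\mathbf{Q}_p$) and the modulo-$\mathcal{M}_o^{tf}(B_0Z)$ device that makes the construction well-defined and functorial, so it does not produce the claimed functor. Part 3) of your proposal is broadly in line with the paper, modulo the fact that the paper cites its own integral-structure criterion for finite presentation of irreducibles and obtains exactness directly from the observation that the trace map is the identity when $F=\mathbf{Q}_p$.
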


L'int\^eret de ce r\'esultat d\'epend de l'existence de   $o$-repr\'esentations irr\'eductibles de $G$ admissibles  de pr\'esentation finie lorsque $F\neq \bf Q_{p}$. On sait que les repr\'esentations irr\'eductibles non supersinguli\`eres  sont admissibles de pr\'esentation finie. Malheureusement, on n'a semble-t-il, aucun exemple - ou contre-exemple -  de cette propri\'et\'e dans le cas supersingulier.

\bigskip La d\'emonstration du th\'eor\`eme principal repose sur un th\'eor\`eme technique  dans la cat\'egorie $ {\cal M}_{o}(B)$ o\`u $B$ est le sous-groupe triangulaire sup\'erieur  de $G$. 
 
\bigskip  Pour \'enoncer le th\'eor\`eme technique, introduisons des notations suppl\'ementaires. Notons $B=TU$ la d\'ecomposition de Levi usuelle, $B_{0}=T_{0}U_{0}$ le sous-groupe ouvert compact des points $O_{F}$-entiers, $ t :=\pmatrix {p_{F}&0 \cr 0&1} \  ,$ le monoide  $$B^{+}\quad:=\quad B_{0}Zt^{\bf N}\quad  = \quad \cup_{n \in \bf N}\ B_{0}Zt^{n} \quad , $$  $(B^{+})^{-1}$ son inverse. 
 On identifie naturellement $U(q):=U(O_{F}/p_{F}O_{F})$ \`a un syst\`eme de repr\'esentants de $U_{0}/tU_{0}t^{-1}$.  
On dira que $D\in  {\cal M}_{o}((B^{+})^{-1})$ est \'etale si l'application $$d\mapsto ((ut)^{-1}d)_{u\in U(q)}\quad : \quad D\quad \to \quad \oplus_{u\in U(q)}D$$ est bijective. La cat\'egorie  ${\cal M}_{o}^{et}((B^{+})^{-1})$  des $o$-repr\'esentations lisses \'etales de $(B^{+})^{-1}$ est ab\'elienne.

\bigskip 

Soit $V\in {\cal M}_{o}(B)$ admettant une pr\'esentation finie (relative \`a $B_{0}Z$). On choisit, comme on le peut,  une suite exacte dans  ${\cal M}_{o}^{tf}(B)$,  
$$0 \to R_{V}(V_{0}) \to \ind_{B_{0}Z}^{B}(V_{0}) \to V \to 0 \quad , $$
avec  $V_{0}\subset V|_{B_{0}Z}$. 
Soit $$0\ \to \ V' \ \to \ V  \ \to \  V''\  \to \ 0$$ une suite exacte dans ${\cal M}_{o}(B)$ de repr\'esentations de pr\'esentations finies. On choisit, comme on le peut,  des $V'_{0},V_{0}, V''_{0}$ comme ci-dessus dans les restrictions \`a $B_{0}Z$ de $V',V,V''$ formant 
 une suite exacte   $$0\ \to \ V'_{0}\  \to \ V_{0} \ \to \ V_{0}'' \ \to \ 0$$ dans ${\cal M}_{o}^{tf}(B_{0}Z)$ 
 et induisant des pr\'esentations finies de $V',V,V''$.

\begin{theorem}\label{tech} Modulo $ {\cal M}^{ft}_{o}(B_{0}Z)$,

a)  $<B^{+}V_{0}> $ ne d\'epend pas du choix de $V_{0}$,

b) $\Delta_{V}(V_{0})\ := \ <B^{+}V_{0}> \cap  <(B-B^{+})V_{0}> \ \equiv \ 0 $,

c) $$D_{V}(V_{0}) \ := \ {<B^{+}V_{0}> \over \Delta_{V}(V_{0})} \ \in \ {\cal M}_{o}^{et}((B^{+})^{-1})\quad , $$ 

d) $0\ \to  \  <B^{+}V'_{0}> \ \to \   <B^{+}V_{0}>\ \to \  <B^{+}V''_{0}>\ \to \ 0$ est une suite exacte,

e) si $V$ se prolonge en une repr\'esentation admissible de $G$, alors $<B^{+}V_{0}>^{U_{0}} \  \equiv \ 0.$
\end{theorem}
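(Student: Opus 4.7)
The plan is to reduce to the compactly induced case and then handle the general finite presentation via a diagram chase. The combinatorial input is the coset decomposition
$$B \;=\; \bigsqcup_{(y,n)\in F/O_F\times\Z} B_0 Z\, u_y t^n, \qquad u_y := \pmatrix{1&y\cr 0&1},$$
in which $B^+/B_0 Z$ is identified with $\{(0,n):n\ge 0\}$ and $(B-B^+)/B_0 Z$ with the complement. For $n\ge 0$ one checks $t^n B_0 Z = B_0 Z\, t^n$, hence for each $b\in B$ the subspace $b\tilde V_0$ of $\ind_{B_0 Z}^B V_0$ is supported on the single right $B_0 Z$-coset containing $b$.

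In the free case $V=\ind_{B_0 Z}^B V_0$, the preceding remark shows $<B^+V_0>$ and $<(B-B^+)V_0>$ have disjoint supports, so $\Delta_V(V_0)=0$ and $D_V(V_0)\cong\bigoplus_{n\ge 0}V_0$ as $o$-module. For (c), the étale property of $d\mapsto ((ut)^{-1}d)_{u\in U(q)}$ follows from the Iwahori factorisation $U_0=\bigsqcup_{u\in U(q)}u\cdot tU_0 t^{-1}$: applying $(ut)^{-1}$ to the level-$n$ summand lands in level $n-1$, and the $U(q)$-indexed family of maps realises the direct sum decomposition of the level-$n$ piece into $q$ translates of the level-$(n-1)$ piece. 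Statements (a) and (d) in this case are additivity and functoriality of $\ind$.

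For general $V$, I would fix the finite presentation $0\to R \to \ind_{B_0 Z}^B V_0 \to V\to 0$ with $R\in\mathcal M^{tf}_o(B)$, and let $\tilde V_0$ denote the canonical lift of $V_0$. A diagram chase yields
$$\Delta_V(V_0) \;\cong\; \frac{<B^+\tilde V_0>\cap\bigl(R+<(B-B^+)\tilde V_0>\bigr)}{<B^+\tilde V_0>\cap R}.$$
The decisive point is that $R=\sum_{i=1}^m o[B]\,r_i$ with each $r_i$ of finite support in the coset decomposition. Writing $r_i=r_i^{+}+r_i^{-}$ according to the $B^+$/$(B-B^+)$ split, the numerator is controlled by $B^+$-translates of the finitely many $r_i^{\pm}$ whose negative components can still be shifted into a $B^+$-layer; since only finitely many such translates can bring negative support into positive layers, the resulting contribution is $B_0 Z$-finitely generated. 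This proves (b). Part (a) follows by the same analysis applied to the inclusion $V_0\subset V_0+V_0'$; part (d) follows from the snake lemma applied to the compatible presentations of $V', V, V''$. Part (c) in general reduces to the free case after observing that $D_V(V_0)\equiv V/<(B-B^+)V_0>$ and that the étale property is inherited modulo the $B_0 Z$-finitely generated error coming from $R$.

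For (e), the admissibility of $V$ and its extension to $G$ bring in the Weyl element $w=\pmatrix{0&1\cr 1&0}$: a Bruhat calculation shows $wB^+w^{-1}$ essentially lies in the negative monoid, transporting $<B^+V_0>$ into $<(B-B^+)V_0>$ modulo a bounded correction, while admissibility ensures that each $V^{U_0^{(n)}}$ is $o$-finite. Combining these, $<B^+V_0>^{U_0}$ lies in a $B_0 Z$-finitely generated submodule. The main obstacle is step (b): tracking how the finitely many $B$-generators of $R$ propagate under the $B^+$-action and verifying that they produce only a $B_0 Z$-finitely generated intersection with $<(B-B^+)\tilde V_0>$ is a delicate combinatorial bookkeeping. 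The analogous control in (e), where one must simultaneously leverage admissibility and the $G$-extension, constitutes the heart of the argument.
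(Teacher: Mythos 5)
For (a)--(d) you are broadly following the paper's route, though with two slips. The identity $t^nB_0Z=B_0Zt^n$ is false for $n>0$ (since $t^nU_0t^{-n}$ is a proper subgroup of $U_0$); fortunately all you actually use is that $b[1,V_0]$ is supported on a single right $B_0Z$-coset, which holds regardless. More seriously, your treatment of (c) only handles the free case carefully; the assertion that the general case ``reduces to the free case after observing that $D_V(V_0)\equiv V/<(B-B^+)V_0>$'' elides the point that surjectivity of $d\mapsto((ut)^{-1}d)_{u\in U(q)}$ has to hold exactly, not merely up to error. That requires the explicit computation of $\bigcap_{u\in U(q)}utC$ with $C:=R_V(V_0)+(B-B^+)[1,V_0]$, using the decomposition $R_V(V_0)+<B^+[1,V_0]>=S_V^-(V_0)\oplus<B^+[1,V_0]>$, which is the content of Lemma~\ref{111}; your $B^+/(B-B^+)$ bookkeeping on generators of $R$ is in the spirit of the paper's Proposition~\ref{1} but does not address this surjectivity.

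Part (e) has a genuine gap: your sketch provides no mechanism by which admissibility forces the required finiteness. Conjugating $B^+$ by $w$ and observing that $V^H$ is $o$-finite for compact open $H$ does not bound $<B^+V_0>^{U_0}$, because that module a priori receives contributions at every ``level'' and nothing in a ``bounded correction'' controls how many levels occur. The paper's argument rests on the exhaustive increasing filtration $(\Delta^{(n)}_V(V_0))_{n\ge 0}$ of $<B^+V_0>$ (after Hu), its associated length function $\ell_{V_0}$, and the operators $A_m:=\sum_{u\in U_0/U_m}ut^m$. The decisive step, Lemma~\ref{Am}, is the strict level increase $\ell_{V_0}(A_mv)=\ell_{V_0}(v)+m$ whenever $v\notin\Delta_V(V_0)$. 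On the other side, a $U_0$-invariant $v$ generating an admissible subrepresentation $V(v)$ is fixed by some $C_{a,0}=<U_a^-,U_0>$, and a calculation with the Iwahori factorisation of $C_{a,b}$ shows that $C_{a,0}$ then fixes all the $A_mv$; admissibility makes $V(v)^{C_{a,0}}$ an $o$-module of finite type, which is incompatible with $(A_mv)_m$ reaching arbitrarily high levels. This tension forces $v\in\Delta_V(V_0)$, and (b) then finishes. That level-increasing mechanism is the substance of (e) and is entirely absent from your proposal.
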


On a not\'e $<B^{+}V_{0}> $ et $<(B-B^{+})V_{0}> $ les $o$-modules engendr\'es par $B^{+}V_{0}$  et $<(B-B^{+})V_{0}> $.

Lorsque $V$ se prolonge en une repr\'esentation admissible de $G$, les propri\'et\'es a), \ldots ,  e) sont vraies modulo la cat\'egorie
$ {\cal M}^{f}_{o}$  des $o$-modules finis.

\bigskip Lorsque $V\in  {\cal M}_{o-tor}(B)$, l'image de  $D_{V}(V_{0})$
par la dualit\'e de Pontryagin $M\mapsto M^{*}:=\Hom_{o}(M, L/o)$ est  un module sur  la $o$-alg\`ebre d'Iwasawa  $\Lambda_{o}(U_{0})$ de $U_{0}$ muni d'une action semi-lin\'eaire de $T_{0}t^{\bf N}Z$.  La dualit\'e de Pontryagin induit une \'equivalence entre  la cat\'egorie  $ {\cal M}^{tf}(\Lambda_{o}(U_{0}))$  des $\Lambda_{o}(U_{0})$-modules de type fini et la cat\'egorie des $M\in {\cal M}_{o-tor}(U_{0})$ tels que $M^{U_{0}, p_{L}=0}$ est fini  (lemme de Nakayama topologique).

\bigskip La m\'ethode pour passer du th\'eor\`eme technique \`a la partie 2) du th\'eor\`eme \ref{0} n'est pas nouvelle. Soit $O_{\cal E}$ l'anneau de Fontaine \'egal \`a la compl\'etion $p$-adique du localis\'e de  $\Lambda_{o}(U({\bf Z_{p}}))$  aux \'el\'ements non divisibles par $p$. Lorsque $F$ est de caract\'eristique $0$,  la  trace  $tr: O_{F}\to \bf Z_{p}$  
permet par produit tensoriel avec $O_{\cal E} \otimes_{\Lambda_{0}}- $ de passer des   $\Lambda_{0}$-modules aux $O_{\cal E}$-modules. Le produit tensoriel tue  $ {\cal M}^{f}_{o}(B_{0}Z)$, aussi  par le  th\'eor\`eme technique \ref{tech}, on obtient  un  foncteur contravariant de  $ {\cal M}_{o-tor}^{apf}(G) $   vers  la cat\'egorie des $(\varphi, \Gamma)$-modules \'etales de type fini  sur  $O_{\cal E}$.  Cette derni\`ere est \'equivalente \`a  ${\cal M}_{o}^{f}(\Gal _{\bf Q_{p}})$ par Fontaine.

\bigskip Nous n'obtenons  pas  des repr\'esentations  de $\Gal_{F}$ mais  de  $\Gal_{\bf Q_{p}}$.  La  repr\'esentation galoisienne obtenue  de $\Gal_{\bf Q_{p}}$ devrait  \^etre li\'ee \`a une repr\'esentation de $\Gal_{F}$. peut-\^etre par une induction tensorielle comme le sugg\`ere Breuil \cite{Breuil}.

\bigskip  Les  parties a) \`a d) du th\'eor\`eme technique, \'etendant \`a $F$ des constructions de P. Colmez, datent de  2006 et avaient \'et\'e expos\'ees dans un  cours de M2 de l'universit\'e de Paris 7,  et un colloque \`a Luminy.  La partie e),  li\'ee \`a un probl\`eme ouvert dans mes travaux avec P. Schneider pour des groupes de rang sup\'erieur d\'efinis sur $\bf Q_{p}$, est obtenue gr\^ace aux travaux et m\'ethodes d\'evelopp\'ees par  Y.Hu qui a \'etudi\'e de fa\c{c}on approfondie le cas o\`u $V$ et $V_{0}$ sont des  repr\'esentations irr\'eductibles de $G$ et de $KZ$ avec un caract\`ere central.  

\bigskip Je  remercie vivement  Pierre Colmez, Peter Schneider et Yongquan Hu de leurs  explications et des discussions 
que nous avons eues ensemble. Je remercie aussi  les organisateurs du colloque de Berlin en m\'emoire de Martin Grabitz en septembre 2009, de m'avoir permis d'y exposer ces r\'esutats. 

\section{Repr\'esentations lisses de pr\'esentation finie}

 Soit $o$ un anneau commutatif, $B$ un groupe localement profini, $B_{0} $ un sous-groupe ouvert de $B$. On s'int\'eresse \`a la cat\'egorie ${\cal M}_{o}(B)$ des repr\'esentations lisses du groupe $B$ sur des $o$-modules. La lissit\'e signifie que chaque vecteur est fixe par un sous-groupe ouvert de $B$. On  note 
 $$\ind^{B}_{B_{0}}\ : {\cal M}_o(B_{0}) \to {\cal M}_o(B)$$ le foncteur adjoint \`a gauche de la restriction de $B$ \`a $B_{0}$, appel\'e aussi l'induction compacte de $B_{0}$ \`a $B$.  C'est un foncteur exact.  Soit    $V_{0} \in  {\cal M}_o(B_{0})$.  On note $[1,V_{0}]$ le sous-espace de $\ind_{B_{0}}^{B}(V_{0})$ form\'e par les fonctions de support $B_{0}$ et on note $[1,v_{0}]$ celle de valeur $v_{0}\in V_{0}$ en $1$. 
  Soit $V\in {\cal M}_{o}(B)$ un quotient de $\ind^{B}_{B_{0}}(V_{0})$. On dit que 
 $$\ind^{B}_{B_{0}}(V_{0})\to V$$
 est une pr\'esentation de $V$. On dit que la pr\'esentation  est finie si $V_{0} \in  {\cal M}_o(B_{0})$  et le noyau $R_{V}(V_{0})\in {\cal M}_{o}(B)$ de la pr\'esentation sont des repr\'esentations de type fini.
 
 \begin{lemma} Une repr\'esentation $V\in {\cal M}_{o}(B)$ est de type fini si et seulement s'il existe $V_{0} \in  {\cal M}_o(B_{0})$  de type fini avec $V_{0}\subset V|_{B_{0}}$ engendrant $V$.
 \end{lemma}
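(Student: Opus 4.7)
The statement amounts to verifying that ``of finite type as a $B$-representation'' and ``generated (over $B$) by a finite-type smooth $B_0$-subrepresentation'' express the same finiteness condition. The key point in both directions is that since $B_0$ is open in $B$, the restriction $V|_{B_0}$ automatically lies in $ {\cal M}_o(B_{0})$, so any $B_{0}$-submodule of $V$ is smooth.

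For the direct implication, I would start from finitely many generators $v_{1},\ldots,v_{n}\in V$ of $V$ as a smooth $B$-representation and set
$$V_{0}\ :=\ \sum_{i=1}^{n}\,\langle B_{0}\cdot v_{i}\rangle_{o}\ \subset\ V|_{B_{0}}\ .$$
This is a $B_{0}$-subrepresentation of $V|_{B_{0}}$, hence smooth, and it is generated by $v_{1},\ldots,v_{n}$ as a $B_{0}$-representation, so $V_{0}\in {\cal M}_o(B_{0})$ is of finite type. Because $V_{0}$ contains the $v_{i}$ and these generate $V$ over $B$, $V_{0}$ also generates $V$ as a $B$-representation.

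For the converse, I would pick finitely many elements $w_{1},\ldots,w_{m}\in V_{0}$ generating $V_{0}$ as a $B_{0}$-representation. Any $B$-subrepresentation of $V$ that contains the $w_{i}$ contains the $B_{0}$-span of the $w_{i}$, hence contains $V_{0}$, and therefore equals $V$ since $V_{0}$ generates $V$ over $B$. Thus $w_{1},\ldots,w_{m}$ generate $V$ as a $B$-representation, so $V$ is of finite type.

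There is no real obstacle here: the lemma is essentially the observation that, for the group $B$ with open subgroup $B_{0}$, ``finitely generated'' commutes with the passage between the two levels, the smoothness assumption being preserved for free by openness of $B_{0}$.
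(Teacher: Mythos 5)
Your proof is correct and follows essentially the same route as the paper's: in the forward direction you take the $B_{0}$-subrepresentation generated by a finite generating set of $V$ (the paper does exactly this), and the converse is the same elementary observation the paper dismisses as clear. No discrepancy to report.
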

 \begin{proof} Si $V$ est de type fini, engendr\'e par une partie finie $X\subset V$, alors la sous-repr\'esentation $V_{0}$ de $V|_{B_{0}}$ engendr\'ee par $X$ engendre $V$, et l'application naturelle $B$-\'equivariante $\ind_{B_{0}}^{B}(V_{0})\to V$ est une pr\'esentation de $V$. L'inverse est clair.
 \end{proof}
 
Un morphisme de pr\'esentations  d'une pr\'esentation $\ind^{B}_{B_{0}}(V_{0})\to V$  vers une pr\'esentation
$\ind^{B}_{B_{0}}(V'_{0})\to V$, 
est une application $B$-\'equivariante
$$f:\ind^{B}_{B_{0}}(V_{0})\to \ind^{B}_{B_{0}}(V'_{0})$$
 tel que le triangle form\'e avec $V$ soit commutatif. On note que 
  $$ f(R_{V}(V_{0})) \ \subset \ R_{V}(V'_{0}) $$
avec \'egalit\'e si $f$ est surjective.
 
 \begin{lemma}  \label{quot} Soit $f:\ind^{B}_{B_{0}}(V_{0})\to \ind^{B}_{B_{0}}(V'_{0})$ un morphisme surjectif entre deux pr\'esentations de $V$. Si le noyau $R_{V}(V_{0})\in {\cal M}_{o}(B)$ de la pr\'esentation $\ind^{B}_{B_{0}}(V_{0})\to V$ est de type fini, il en est de m\^eme du noyau $R_{V}(V' _{0})\in {\cal M}_{o}(B)$ de la seconde. La       
 r\'eciproque est vraie si le noyau de $f$ est de type fini.
 \end{lemma}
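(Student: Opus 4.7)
The plan is to exploit the observation made immediately before the lemma: for a morphism $f$ of presentations, one has $f(R_V(V_0)) \subset R_V(V'_0)$, with equality when $f$ is surjective. The key structural input is a short exact sequence relating the three kernels, from which both statements follow by standard finite-generation arguments.

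First I would check that $\ker f$ is entirely contained in $R_V(V_0)$: any element of $\ker f$ maps to zero in $\ind_{B_0}^B(V'_0)$ and hence, by commutativity of the presentation triangle, to zero in $V$, so it sits in $R_V(V_0)$. Combined with the surjectivity of the restriction $f : R_V(V_0) \to R_V(V'_0)$ (a direct consequence of the equality $f(R_V(V_0)) = R_V(V'_0)$), this produces a short exact sequence in ${\cal M}_o(B)$:
$$0 \to \ker f \to R_V(V_0) \to R_V(V'_0) \to 0. $$

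With this sequence in hand, both halves are routine. For the direct implication, $R_V(V'_0)$ is a $B$-equivariant quotient of $R_V(V_0)$, so finite generation passes over: the images of a finite $B$-generating family of $R_V(V_0)$ form a finite $B$-generating family of $R_V(V'_0)$. For the converse, under the additional hypothesis that $\ker f$ is of finite type, $R_V(V_0)$ is the middle term of an extension of two finite-type representations; taking a finite $B$-generating family of $\ker f$ together with lifts in $R_V(V_0)$ of a finite $B$-generating family of $R_V(V'_0)$ yields a finite $B$-generating family of $R_V(V_0)$, so $R_V(V_0)$ is of finite type as well.

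I do not anticipate any real obstacle here: the lemma is essentially a translation of the standard ``two out of three'' property of finite generation in a short exact sequence, and the only mildly delicate point is making the identification $f(R_V(V_0)) = R_V(V'_0)$ and the inclusion $\ker f \subset R_V(V_0)$ explicit in order to extract the relevant exact sequence.
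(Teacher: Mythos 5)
Your proof is correct and takes the same (and essentially the only natural) route the paper has in mind — the paper dismisses this lemma as ``Evident,'' and your write-up simply spells out the short exact sequence $0 \to \ker f \to R_V(V_0) \to R_V(V'_0) \to 0$ that makes both directions immediate via the stability of finite generation under quotients and extensions. The two points you flag as needing care — that $\ker f \subset R_V(V_0)$ by commutativity of the presentation triangle, and that surjectivity of $f$ upgrades $f(R_V(V_0)) \subset R_V(V'_0)$ to equality — are exactly the right ones and are handled correctly.
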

 
 \begin{proof} Evident.

\end{proof}

 Une partie $X$ de $V$ est appel\'ee g\'en\'eratrice si elle  engendre  le $o [B]$-module $V$. 
  Pour $W$ une partie de $V$  et $X$ une partie de $B$, on note $< XW>$ le sous-espace    de $V$ engendr\'e par les $xw$ pour tout $x\in X, w\in W$. 
Toute partie g\'en\'eratrice $X$ de $V$ d\'efinit naturellement une pr\'esentation
$$\ind^{B}_{B_{0}}(<B_{0}X>)\to V$$ de $V$.

  \begin{proposition} \label{fp} Si la pr\'esentation $\ind^{B}_{B_{0}}(V_{0})\to V$ de $V$ est finie,  si $W_{0}$ est
  un $o[B_{0}]$-sous-module de type fini de $\ind^{B}_{B_{0}}(V_{0})$ contenant $[1,V_{0}]$,  et si 
$W'_{0} $ est  l'image de $W_{0}$ dans $V$, 
alors la pr\'esentation $\ind^{B}_{B_{0}}(W'_{0})\to V$ est aussi finie.
 \end{proposition}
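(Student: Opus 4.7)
The plan is to sandwich $R_{V}(W'_{0})$ in a short exact sequence with finite-type outer terms via the snake lemma applied to a diagram built from $\ind^{B}_{B_{0}}$ of a $B_{0}$-equivariant inclusion inside $V$.

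First, I would reduce to the case where $V_{0}$ is a $B_{0}$-submodule of $V$. Let $V'_{0}$ be the image of $[1,V_{0}]$ under the given presentation $\alpha : \ind^{B}_{B_{0}}(V_{0}) \to V$, so that $V'_{0}$ is a $B_{0}$-submodule of $V$ and the natural map $V_{0} \to V'_{0}$ is a surjection of $B_{0}$-representations. By adjunction this induces a surjective morphism of presentations $\ind^{B}_{B_{0}}(V_{0}) \to \ind^{B}_{B_{0}}(V'_{0}) \to V$, to which Lemma \ref{quot} applies to give $R_{V}(V'_{0})$ of finite type. Since $[1,V_{0}] \subset W_{0}$ by hypothesis, applying $\alpha$ shows $V'_{0} \subset W'_{0}$ inside $V$.

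Next, I would apply the exact functor $\ind^{B}_{B_{0}}$ to the tautological short exact sequence
$$0 \to V'_{0} \to W'_{0} \to W'_{0}/V'_{0} \to 0$$
of $B_{0}$-representations, and fit the resulting short exact sequence of $B$-representations on top of $0 \to V \to V \to 0 \to 0$ (with the identity as middle map) using the presentation $\alpha' : \ind^{B}_{B_{0}}(V'_{0}) \to V$ on the left and the candidate presentation $\beta : \ind^{B}_{B_{0}}(W'_{0}) \to V$ in the middle; both are surjective because $V'_{0}$ already generates $V$ as a $B$-representation. Commutativity of the left-hand square reduces, by naturality of the adjunction, to the fact that both compositions $V'_{0} \to W'_{0} \hookrightarrow V$ and $V'_{0} \hookrightarrow V$ agree. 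The snake lemma then produces the exact sequence
$$0 \to R_{V}(V'_{0}) \to R_{V}(W'_{0}) \to \ind^{B}_{B_{0}}(W'_{0}/V'_{0}) \to 0.$$

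The conclusion is that $R_{V}(W'_{0})$ is of finite type as a $B$-representation, being an extension of $R_{V}(V'_{0})$ (of finite type by the first step) by $\ind^{B}_{B_{0}}(W'_{0}/V'_{0})$ (induced from the finitely generated $B_{0}$-representation $W'_{0}/V'_{0}$, itself a quotient of the finitely generated $W_{0}$). Combined with the obvious finite generation of $W'_{0}$, the presentation $\ind^{B}_{B_{0}}(W'_{0}) \to V$ is finite. I do not anticipate any substantive obstacle: the argument is essentially a bookkeeping exercise combining Lemma \ref{quot} with the snake lemma, and the only care point is the distinction between $V_{0}$ and its image $V'_{0}$ in $V$ needed to turn the map $V_{0} \to W'_{0}$ into an honest $B_{0}$-submodule inclusion before applying the snake lemma.
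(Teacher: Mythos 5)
Your proof is correct, and the packaging is genuinely different from the paper's, though the underlying exactness computation is the same. The paper first reduces to the case $W_{0}=[1,V_{0}]$ (its step~a), which is exactly your first step: replace $V_{0}$ by its image $V'_{0}=\alpha([1,V_{0}])\subset V$ and use Lemma~\ref{quot}. For the general $W_{0}$ the paper then stays inside $\ind^{B}_{B_{0}}(V_{0})$: it exhibits the evaluation-at-$1$ map $\phi:\ind^{B}_{B_{0}}(W_{0})\to\ind^{B}_{B_{0}}(V_{0})$ as a \emph{split} surjection of presentations, with section $\ind^{B}_{B_{0}}(j_{0})$ coming from the inclusion $j_{0}:[1,V_{0}]\hookrightarrow W_{0}$, so that $\ker\phi\simeq\ind^{B}_{B_{0}}(W_{0}/[1,V_{0}])$ is of finite type; Lemma~\ref{quot} in the reverse direction then shows $\ind^{B}_{B_{0}}(W_{0})\to V$ is finite, and a second application of step~a) gives the claim for $W'_{0}$. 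You instead push everything into $V$ right away, obtaining the inclusion $V'_{0}\subset W'_{0}\subset V$, and run the snake lemma over the identity map $V\to V$ to read off $R_{V}(W'_{0})$ as an extension of $\ind^{B}_{B_{0}}(W'_{0}/V'_{0})$ by $R_{V}(V'_{0})$. The two arguments compute the ``error'' $\ind^{B}_{B_{0}}$ of a quotient ($W_{0}/[1,V_{0}]$ for the paper, $W'_{0}/V'_{0}$ for you), which is finite type in either case. Your version avoids introducing the auxiliary presentation $\ind^{B}_{B_{0}}(W_{0})\to V$ and the explicit splitting, at the cost of invoking the snake lemma; the paper avoids the snake lemma but uses both directions of Lemma~\ref{quot}. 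Neither is more general, but yours is arguably a bit more direct.
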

\begin{proof} 
a) Le cas $W_{0}=[1,V_{0}]$.
 
 Soit $f_{0}$ la restriction \`a $[1,V_{0}]$ de la pr\'esentation  $\ind^{B}_{B_{0}}(V_{0})\to V$. Il est clair que $V'_{0}=f_{0}(V_{0}) \in  {\cal M}_o(B_{0})$ est de type fini  et g\'en\'eratrice de $V$, et que 
 $$\ind_{B_{0}}^{B}(f_{0}):\ind^{B}_{B_{0}}(V_{0}) \to \ind^{B}_{B_{0}}(V'_{0}) $$ est un morphisme surjectif de pr\'esentations de $V$. Par le lemme \ref{quot},  le noyau de la seconde pr\'esentation est de type fini.
 
 b) Le cas g\'en\'eral.
 
Soit $f_{0}$ la restriction \`a $W_{0}$ de la pr\'esentation  $\ind^{B}_{B_{0}}(V_{0})\to V$, 
et 
$$\ind_{B_{0}}^{B}(W_{0})\to V$$ la pr\'esentation de $V$  de restriction $f_{0}$ \`a $[1,W_{0}]\simeq W_{0}$. 
L'application $B$-\'equivariante $$\phi : \ind_{B_{0}}^{B}(W_{0})\to \ind^{B}_{B_{0}}(V_{0})$$
d\'efinie par $\phi([1,w_{0}])=w_{0}$ pour tout $w_{0}\in W_{0}$ est un morphisme surjectif d\'eploy\'e de pr\'esentations de $V$,  car  l'inclusion $j_{0}: [1,V_{0}] \subset W_{0}$ induit un morphisme 
 $$\ind_{B_{0}}^{B}(j_{0}):\ind^{B}_{B_{0}}(V_{0}) \to \ind^{B}_{B_{0}}(W_{0}) $$ 
 de pr\'esentations de $V$ v\'erifiant
 $$\phi \circ  \ind_{B_{0}}^{B}(j_{0}) \ = \ \id \ .$$
(c'est \'evident car il suffit de le v\'erifier sur $[1,V_{0}] $). Donc 
le noyau de $\phi$ est isomorphe au  $o[B]$-module de type fini $\ind^{B}_{B_{0}}(W_{0}/[1,V_{0}])$. On d\'eduit du lemme \ref{quot}
que  la pr\'esentation $\ind_{B_{0}}^{B}(W_{0})\to V$ est finie. Par 
le cas a), la pr\'esentation $\ind^{B}_{B_{0}}(W'_{0})\to V$ est aussi finie.
  \end{proof}

   \begin{corollary} \label{cor1} Soit $V \in {\cal M}_{o}(B)$ de pr\'esentation finie et soit  $X$  une partie  finie de $V$ . Alors il existe une pr\'esentation  finie $\ind^{B}_{B_{0}}(V_{0})\to V$ avec $V_{0}\subset V|_{B_{0}}$ contenant $X$.
 \end{corollary}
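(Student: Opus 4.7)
\medskip\noindent\textbf{Plan for the proof.} The strategy is to apply Proposition~\ref{fp}: start with any finite presentation handed to us by the hypothesis, enlarge its generating module by elements that map onto $X$, and invoke the proposition to conclude that the enlarged presentation is still finite.

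First I would use the hypothesis to fix \emph{some} finite presentation $\ind_{B_{0}}^{B}(V'_{0}) \to V$, so that $V'_{0}\in {\cal M}_{o}(B_{0})$ is of type fini. Since this presentation is surjective, I pick, for each $x\in X$, a preimage $y_{x} \in \ind_{B_{0}}^{B}(V'_{0})$. I then form the $o[B_{0}]$-submodule
$$W_{0}\ :=\ [1,V'_{0}] \ +\ \sum_{x\in X} o[B_{0}]\cdot y_{x}\ \subset\ \ind_{B_{0}}^{B}(V'_{0})\ ,$$
which contains $[1,V'_{0}]$ by construction. It is of type fini as $o[B_{0}]$-module: the first summand is isomorphic to the finitely generated $V'_{0}$, and the second is the sum of the finitely many cyclic $o[B_{0}]$-modules $o[B_{0}]y_{x}$ indexed by the finite set $X$.

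Proposition~\ref{fp} then applies and shows that the presentation $\ind_{B_{0}}^{B}(V_{0}) \to V$ is finite, where $V_{0}$ denotes the image of $W_{0}$ in $V$. By $B$-equivariance of the surjection, $V_{0}$ is a $B_{0}$-subrepresentation of $V|_{B_{0}}$, and since $y_{x}\mapsto x$ for every $x\in X$, the submodule $V_{0}$ contains $X$, which is exactly what the corollary requires. There is no genuine obstacle: the corollary is essentially packaged in Proposition~\ref{fp}, the only point to check being that the enlargement $W_{0}$ is still finitely generated over $o[B_{0}]$, which is immediate because we are only adjoining finitely many cyclic summands to a module already of type fini.
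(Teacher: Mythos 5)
Your proof is correct and follows essentially the same route as the paper: both fix an initial finite presentation, enlarge $[1,V'_{0}]$ to a finitely generated $o[B_{0}]$-submodule $W_{0}$ whose image in $V$ contains $X$, and then invoke Proposition~\ref{fp}. The only difference is that you spell out explicitly how to build $W_{0}$ (adjoining the cyclic modules generated by preimages $y_x$ of the elements of $X$), whereas the paper merely asserts that such a $W_{0}$ can be chosen.
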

 \begin{proof} On choisit une pr\'esentation finie
 $\ind^{B}_{B_{0}}(V_{0})\to V$ de $V$ puis   un $o[B_{0}]$-sous-module de type fini de $\ind^{B}_{B_{0}}(V_{0})$ contenant $[1,V_{0}]$,  tel que 
 l'image $W'_{0} $  de $W_{0}$ dans $V$ contienne $X$.  La pr\'esentation $\ind^{B}_{B_{0}}(W'_{0})\to V$ convient.
   \end{proof}
   
 \begin{corollary} Soit $V \in {\cal M}_{o}(B)$ admettant deux pr\'esentations finies   $\ind^{B}_{B_{0}}(V'_{0})\to V$ et $\ind^{B}_{B_{0}}(V''_{0})\to V$. Alors il existe une pr\'esentation  finie $\ind^{B}_{B_{0}}(V_{0})\to V$ de $V$  avec $V_{0}\subset V|_{B_{0}}$, et des morphismes de pr\'esentation
 $$\ind^{B}_{B_{0}}(V'_{0}) \to \ind^{B}_{B_{0}}(V_{0}) \ \ ,  \ \ \ind^{B}_{B_{0}}(V''_{0}) \to \ind^{B}_{B_{0}}(V_{0})$$
 de $V$.
 \end{corollary}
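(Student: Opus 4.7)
The plan is to apply Corollary \ref{cor1} to a finite subset of $V$ manufactured from generators of the two given presentations, and then to invoke the universal property of $\ind$.

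First I would denote by $f' : \ind^B_{B_0}(V'_0) \to V$ and $f'' : \ind^B_{B_0}(V''_0) \to V$ the two given finite presentations. Since they are finite, $V'_0$ and $V''_0$ are of finite type over $o[B_0]$, so I can choose finite generating subsets $X' \subset V'_0$ and $X'' \subset V''_0$. The finite subset
$$X \ := \ f'([1,X']) \cup f''([1,X'']) \ \subset \ V$$
will be the input for Corollary \ref{cor1}, which produces a finite presentation $g : \ind^B_{B_0}(V_0) \to V$ with $V_0 \subset V|_{B_0}$ a sub-$o[B_0]$-module of $V$ containing $X$.

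Next I would observe that $V_0$ automatically contains $f'(V'_0)$ and $f''(V''_0)$: being a $B_0$-submodule of $V$ and containing $f'(X')$, it contains the $B_0$-submodule $<B_0 f'(X')>$, which coincides with $f'(<B_0 X'>) = f'(V'_0)$, and symmetrically for $f''$. Consequently the restriction of $f'$ to $[1,V'_0] \simeq V'_0$ co-restricts to a $B_0$-equivariant map $V'_0 \to V_0$, which by the adjunction defining $\ind^B_{B_0}$ yields a $B$-equivariant morphism $F' : \ind^B_{B_0}(V'_0) \to \ind^B_{B_0}(V_0)$; the map $F''$ is constructed analogously.

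Finally, to conclude I would verify that $F'$ and $F''$ are morphisms of presentations, i.e.\ that $g \circ F' = f'$ and $g \circ F'' = f''$: this is immediate on the generating subspaces $[1,V'_0]$ and $[1,V''_0]$ by the very definition of $F'$ and $F''$, and propagates to all of $\ind^B_{B_0}(V'_0)$ and $\ind^B_{B_0}(V''_0)$ by $B$-equivariance. I do not expect a serious obstacle: the argument is a purely formal consequence of Corollary \ref{cor1} and of the adjunction, the only mild point being the passage from the finite generators $X'$ to the full image $f'(V'_0)$ via the $B_0$-stability of $V_0$.
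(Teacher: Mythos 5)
Your proof is correct and follows essentially the same route as the paper: both reduce the problem to Corollary \ref{cor1} applied to a finite set of elements, obtain a $B_0$-submodule $V_0$ of $V$ giving a finite presentation and containing the images of $V'_0$ and $V''_0$, and then produce the morphisms of presentations via the adjunction defining $\ind^B_{B_0}$. Your version is marginally more explicit than the paper's in spelling out that one must first pass to finite generating sets $X'$, $X''$ (since Corollary \ref{cor1} requires a finite input) and then recover the full images $f'(V'_0)$, $f''(V''_0)$ from the $B_0$-stability of $V_0$; the paper states this step more tersely.
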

 \begin{proof} Par le corollaire \ref{cor1}, il existe un $o[B_{0}]$-sous-module $V_{0}$ g\'en\'erateur de $V$ contenant les images de $V'_{0} $ et de $V''_{0}$ dans $V$, tel que la pr\'esentation  $\ind^{B}_{B_{0}}(V_{0})\to V$ de $V$  soit finie. Les applications $f'_{0}:V'_{0}\to V_{0}$ et $f''_{0}:V''_{0}\to V_{0}$ induisent des morphismes 
 de pr\'esentations  $$\ind^{B}_{B_{0}}(V'_{0}) \to \ind^{B}_{B_{0}}(V_{0}) \ \ ,  \ \ \ind^{B}_{B_{0}}(V''_{0}) \to \ind^{B}_{B_{0}}(V_{0})$$
 de $V$.
 
   \end{proof}

\begin{proposition}\label{suite}  On suppose que la cat\'egorie  $ {\cal M}_o(B_{0})$ est noetherienne. Soit 
  $$ 0 \to V' \to V \to V'' \to 0$$ une suite exacte dans   ${\cal M}_o(B)$.
  
  a) Si $V',V''$ sont de pr\'esentation finie,  alors il existe des $o[B_{0}]$-sous-modules $V'_{0}, V_{0}, V''_{0}$ g\'en\'erateurs de $V', V, V'',$ formant une suite exacte
  $$0 \to V'_{0} \to V_{0} \to V''_{0} \to 0 \ ,$$
 tels que les pr\'esentations associ\'ees de $V',V,V''$ soient finies.

 b) Si $V'$  est de type fini et si $V$ est de pr\'esentation finie, alors $V''$ est de pr\'esentation finie.

\end{proposition}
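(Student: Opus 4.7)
The plan for (a) is to construct a compatible $V_{0}$ from given finite presentations of $V'$ and $V''$. First I would fix a finite presentation $\ind_{B_{0}}^{B}(V''_{0}) \to V''$ and lift $V''_{0}$ along $V \to V''$ to a finitely generated $o[B_{0}]$-submodule $W_{0} \subset V|_{B_{0}}$ whose image in $V''$ equals $V''_{0}$ (just pick lifts of finitely many generators). The intersection $W_{0} \cap V'$ is an $o[B_{0}]$-subrepresentation of the finitely generated $W_{0}$, hence itself finitely generated by the Noetherian hypothesis on ${\cal M}_{o}(B_{0})$. Applying Corollary \ref{cor1} to any chosen finite presentation of $V'$, I would enlarge it to a finite presentation $\ind_{B_{0}}^{B}(V'_{0}) \to V'$ with $V'_{0}$ containing a finite generating set of $W_{0} \cap V'$, so that $V'_{0} \supset W_{0} \cap V'$. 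Then I set $V_{0} := V'_{0} + W_{0} \subset V|_{B_{0}}$; the modular law gives $V_{0} \cap V' = V'_{0} + (W_{0} \cap V') = V'_{0}$, so $0 \to V'_{0} \to V_{0} \to V''_{0} \to 0$ is exact, and $V_{0}$ clearly generates $V$.

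Applying the exact functor $\ind_{B_{0}}^{B}$ and chasing the resulting $3 \times 3$ diagram whose right column is $0 \to V' \to V \to V'' \to 0$ (nine lemma) yields an exact sequence of kernels
$$0 \to R_{V'}(V'_{0}) \to R_{V}(V_{0}) \to R_{V''}(V''_{0}) \to 0\,.$$
Since an extension of finitely generated $o[B]$-modules is finitely generated, $R_{V}(V_{0})$ is finitely generated and the presentation of $V$ is finite, which settles a).

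For part b), I would first choose a finite subset $X \subset V'$ generating $V'$ as an $o[B]$-module (this is where $V'$ of finite type is used). Starting from a finite presentation $\ind_{B_{0}}^{B}(V_{0}) \to V$, Corollary \ref{cor1} permits replacing $V_{0}$ by a larger finitely generated $o[B_{0}]$-submodule containing $X$ while keeping the presentation finite. Let $V''_{0}$ be the image of $V_{0}$ in $V''$; it is finitely generated and generates $V''$. Exactness of $\ind_{B_{0}}^{B}$ identifies $\ker(\ind V_{0} \to \ind V''_{0})$ with $\ind(V_{0} \cap V')$, and since $V_{0} \cap V' \supset X$ generates $V'$, the map $\ind(V_{0} \cap V') \to V'$ is surjective. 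A short diagram chase then shows $R_{V''}(V''_{0})$ is exactly the image of $R_{V}(V_{0})$ under $\ind V_{0} \to \ind V''_{0}$: given $r'' \in R_{V''}(V''_{0})$, lift to $x \in \ind V_{0}$, observe that its image in $V$ lies in $V'$ and may be realized by an element of $\ind(V_{0} \cap V')$; subtracting one lands in $R_{V}(V_{0})$. Hence $R_{V''}(V''_{0})$ is a quotient of the finitely generated $R_{V}(V_{0})$, proving b).

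The delicate step in a) is arranging that $V'_{0}$ absorbs $W_{0} \cap V'$ while still yielding a finite presentation of $V'$; this is precisely where the Noetherian hypothesis on ${\cal M}_{o}(B_{0})$ enters, guaranteeing that $W_{0} \cap V'$ is finitely generated so that Corollary \ref{cor1} applies. The analogous subtlety in b) is that without preloading $V_{0}$ with generators of $V'$ the map $\ind(V_{0} \cap V') \to V'$ need not be surjective, and the identification of $R_{V''}(V''_{0})$ as a quotient of $R_{V}(V_{0})$ breaks down; the finite-type hypothesis on $V'$ is exactly what makes this preloading possible.
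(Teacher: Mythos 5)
Your proof is correct and follows essentially the same route as the paper's: in a) you build $V_{0}=V'_{0}+W_{0}$ from a lift $W_{0}$ of $V''_{0}$ and a $V'_{0}$ enlarged (via Corollary \ref{cor1}) to absorb $W_{0}\cap V'$, with the Noetherian hypothesis guaranteeing $W_{0}\cap V'$ is of finite type, and in b) you preload $V_{0}$ with generators of $V'$ so that $R_{V''}(V''_{0})$ is a quotient of $R_{V}(V_{0})$. The only cosmetic difference from the paper's ``Pr\'eliminaires'' is that you fix $V''_{0}$ first and lift it to $W_{0}$, whereas the paper fixes a submodule $W''_{0}\subset V|_{B_{0}}$ and takes $V''_{0}$ to be its image; your explicit modular-law verification of $V_{0}\cap V'=V'_{0}$ and the explicit diagram chase in b) are welcome additions.
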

\begin{proof} Pr\'eliminaires.  On identifie $V'$ \`a une sous-repr\'esentation de $V$.
Soit  $W''_{0}$  une sous-repr\'esentation de $V|_{B_{0}}$  d'image  $V''_{0}$ dans $V''|_{B_{0}}$  g\'en\'eratrice de $V''$, et soit 
$V'_{0} \in {\cal M}_{o}(B_{0})$  contenue dans $V'|_{B_{0}}$ g\'en\'eratrice de $V'$ et  contenant $W''_{0}\cap V'$. Alors  la sous-repr\'esentation $V_{0}:=V'_{0} +W''_{0}$  de $V|_{B_{0}}$ engendre $V$, et l'on a  la suite exacte dans  ${\cal M}_{o}(B_{0})$,
  $$0 \to V'_{0} \to V_{0} \to V''_{0} \to 0 \ . $$
  Son image par $\ind_{B_{0}}^{B}$ est la suite exacte dans $  {\cal M}_{o}(B)$, 
$$0 \to \ind_{B_{0}}^{B}(V'_{0})\to \ind_{B_{0}}^{B}(V_{0} ) \to \ind_{B_{0}}^{B}(V''_{0} ) \to 0 \ $$
d'image par les morphismes de pr\'esentation  correspondants,  la  suite exacte
$$ 0 \to V' \to V \to V'' \to 0 \ . $$ 
Les noyaux des morphismes de pr\'esentation  forment une suite exacte dans $  {\cal M}_{o}(B)$, 
$$0 \to R_{V'}(V'_{0})\to R_{V}(V_{0} ) \to R_{V''}(V''_{0} ) \to 0 \ .$$

Montrons la propri\'et\'e a). Si  $V', V''\in {\cal M}_o(B)$ sont de type fini, on peut choisir $V''_{0}, W''_{0}  \in {\cal M}_{o}(B_{0})$ de type fini; comme la cat\'egorie ${\cal M}_{o}(B_{0})$  est noetherienne, $W''_{0}\cap V'$ est de type fini. On peut choisir $V'_{0}\in  {\cal M}_{o}(B_{0})$ de type fini, donc  $V_{0}\in  {\cal M}_{o}(B_{0})$ de type fini. Si  $V',V''\in {\cal M}_o(B)$ sont  de pr\'esentation finie, on peut  de plus choisir 
$V'_{0},V''_{0}$ tels que
 $R_{V'}(V'_{0}),R_{V''}(V''_{0} )\in {\cal M}_o(B)$ sont de type fini par le corollaire \ref{cor1};  il en est de m\^eme de $R_{V}(V_{0} )$.

Montrons la propri\'et\'e b). On choisit par la proposition \ref{fp} une pr\'esentation finie 
$\ind^{B}_{B_{0}}(V_{0})\to V$ de $V$ telle que $V_{0}\subset V|_{B_{0}}$ contient l'image 
d'un syst\`eme fini de g\'en\'erateurs  de $V'$. On d\'efinit  $V''_{0}$ comme l'image de $V_{0}$ dans $V''$. Le noyau  $V'_{0} \in {\cal M}_{o}(B_{0})$ de la surjection  $V_{0}\to V''_{0}$ est  contenu dans $V'|_{B_{0}}$ et engendre  $V'$. On est dans la situation des pr\'eliminaires. Comme  $R_{V}(V_{0} )$ est de type fini, il en est de m\^eme de $R_{V''}(V''_{0} )$.

\end{proof}
La cat\'egorie $ {\cal M}_o(B_{0})$  est noetherienne si et seulement si 
la cat\'egorie $ {\cal M}_o^{tf}(B_{0})$ 
des repr\'esentations de type fini  dans  ${\cal M}_o(B_{0})$ est une sous-cat\'egorie de Serre : ferm\'ee par 
sous-objets, quotients et extensions, i.e. pour toute suite exacte dans  ${\cal M}_o(B_{0})$,
$$0\to V'_{0}\to V_{0}\to V''_{0}\to 0$$
$V'_{0} $ et $V''_{0}$ sont de type fini si et seulement si $V_{0}$ est de type fini.

\begin{definition} Lorsque la cat\'egorie ${\cal M}_o(B_{0})$ est noetherienne, deux repr\'esentations de ${\cal M}_o(B_{0})$ isomorphes dans   la cat\'egorie quotient 
 $${\cal M}_o(B_{0})/ {\cal M}_o^{tf}(B_{0})\  .$$
  seront dites isomorphes  modulo $ {\cal M}_o^{tf}(B_{0})$. 
Un suite $V_{1} \to V_{2} \to V_{3}$ dans  ${\cal M}_o(B_{0})$ 
telle que l'image du premier morphisme est isomorphe au noyau du second morphisme modulo $ {\cal M}_o^{tf}(B_{0})$, sera dite exacte modulo $ {\cal M}_o^{tf}(B_{0})$. 
\end{definition}

\begin{definition} On dit que $V\in  {\cal M}_{o}(B)$ est admissible si $V^{H}$ est un $o$-module de type fini pour tout sous-groupe ouvert compact $H$ de $B$. 
\end{definition}

 \begin{lemma} \label{ext} Soit  $o'$ un anneau commutatif contenant $o$ et libre comme $o$-module, et soient $V\in {\cal M}_{o}(B)$  et $V_{o'}:=o'\otimes _{o}V \in {\cal M}_{o'}(B)$.
 Alors $V$ est  admissible (resp. de pr\'esentation finie), si et seulement si $V_{o'}$ est admissible (resp. de pr\'esentation finie).
 \end{lemma}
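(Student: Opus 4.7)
The plan rests on faithful flatness: $o'$ is a nonzero free $o$-module (containing $o$), hence faithfully flat, so $-\otimes_o o'$ is exact and detects both finite generation and surjectivity. Moreover, compact induction commutes with this base change, $\ind_{B_0}^B(V_0)\otimes_o o'\simeq\ind_{B_0}^B(V_{0,o'})$, since $\ind$ is a direct sum functor on underlying $o$-modules.

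For admissibility, smoothness of $V$ and flatness of $o'$ yield $(V_{o'})^H = o'\otimes_o V^H$ for every open compact subgroup $H\subset B$ (reduce the defining infinite intersection to a finite limit over $H/H'$ with $H'\triangleleft H$ open of finite index fixing prescribed vectors, where flat base change commutes with finite limits). Preservation of finite generation by base change and its faithfully flat descent then give the claimed equivalence.

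For finite presentation, the direct implication results from tensoring a finite presentation $0\to R\to\ind_{B_0}^B(V_0)\to V\to 0$ with $o'$. For the converse, I start from a finite presentation $\ind_{B_0}^B(W_0)\to V_{o'}$ of $V_{o'}$ with finite-type kernel $S$, pick finite generators $w_1,\dots,w_n$ of $W_0\subset V_{o'}$, and write $w_i=\sum_j a_{ij}\otimes v_{ij}$ with $v_{ij}\in V$. Let $V_0\subset V|_{B_0}$ be the finite-type $o[B_0]$-submodule generated by all the $v_{ij}$: by construction $V_{0,o'}\supseteq W_0$ inside $V_{o'}$, so $V_{0,o'}$ generates $V_{o'}$ as a $B$-representation; faithful flatness then forces $V_0$ to generate $V$, and the map $\pi\colon\ind_{B_0}^B(V_0)\to V$ is a surjective presentation with some kernel $R$.

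The main point is then to show $R$ is of finite type, which by faithfully flat descent reduces to showing $R\otimes_o o'$ is of finite type. The inclusion $W_0\subset V_{0,o'}$ induces a map of presentations $\ind_{B_0}^B(W_0)\hookrightarrow\ind_{B_0}^B(V_{0,o'})$ over $V_{o'}$, and the snake lemma yields an exact sequence
$$0\to S\to R\otimes_o o'\to\ind_{B_0}^B(V_{0,o'}/W_0)\to 0,$$
whose outer terms are of finite type: $S$ by hypothesis, and $\ind_{B_0}^B(V_{0,o'}/W_0)$ because $V_{0,o'}/W_0$ is a quotient of the finite-type module $V_{0,o'}$ and $\ind$ preserves finite type. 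Consequently $R\otimes_o o'$ is of finite type as an extension of two such. The only subtle step is arranging the inclusion $W_0\subset V_{0,o'}$, which is achieved by the explicit construction of $V_0$ from the tensor decompositions of the $w_i$.
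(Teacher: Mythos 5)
Your proof is correct and rests on the same fundamental idea as the paper's: $o'$ is free (hence faithfully flat) over $o$, so base change is exact, commutes with $H$-invariants and compact induction, and both preserves and detects finite generation. The paper works directly with a basis $(e_i)$ of $o'$ over $o$, writing $V_{o'}=\oplus_i e_i\otimes V$, $V_{o'}^H=\oplus_i e_i\otimes V^H$, and $R_{V_{o'}}(o'\otimes V_0)=o'\otimes R_V(V_0)$; the conclusions are then read off from these identities.

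Where you diverge usefully is in the converse direction for finite presentation. The paper fixes some $V_0\subset V|_{B_0}$ generating $V$ and observes that the presentation of $V$ via $V_0$ is finite iff the presentation of $V_{o'}$ via $o'\otimes V_0$ is finite; to conclude that ``$V_{o'}$ of finite presentation'' implies ``$V$ of finite presentation'', one must know that a finite presentation of $V_{o'}$ forces \emph{this particular} presentation to be finite, which implicitly invokes the independence-of-presentation results (Proposition~\ref{fp}, Lemma~\ref{quot}, Corollaire~\ref{cor1}). You sidestep this: given a finite presentation $\ind_{B_0}^B(W_0)\to V_{o'}$ with kernel $S$, you construct $V_0\subset V|_{B_0}$ directly from the tensor decompositions of generators of $W_0$ so that $W_0\subseteq V_{0,o'}$, then the snake lemma for the two presentations of $V_{o'}$ gives $0\to S\to R\otimes_o o'\to\ind_{B_0}^B(V_{0,o'}/W_0)\to 0$, and finite generation of $R\otimes_o o'$ follows from the Serre-closure of finite type objects under extension, with descent then giving finite generation of $R$. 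This is more explicit and self-contained, at the cost of a slightly longer argument; the paper's version is terser but leans on the surrounding machinery. Both are valid.
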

\begin{proof} Soit $(e_{i})_{i\in I}$  une base  de $o'/o$.
On a $$V_{o'}=\oplus _{i\in I}\ (e_{i} \otimes V)\quad .$$ 
Pour un sous-groupe ouvert $H$ de $B$ on a  $V_{o'}^{H}= \oplus_{i\in I} \ (e_{i} \otimes V^{H})$.  
Donc,   $V'$ admissible  est \'equivalent \`a $V$ admissible.

Soit $V_{0} \in    {\cal M}_{o}(B_{0})$ contenu dans $V|_{B_{0}}$ g\'en\'eratrice de $V$, et $R_{V}(V_{0})$ le noyau de la pr\'esentation associ\'ee $ \ind_{B_{0}}^{B}V_{0}\to V$. 

Alors $V'_{0}:=o'\otimes _{o}V_{0}$ est une sous-repr\'esentation de $V'|_{B_{0}}$  g\'en\'eratrice de $V'$. Soit  $R_{V'}(V_{0}')$ le noyau de la pr\'esentation associ\'ee de $V'$. On a 
   $$V'_{0}= \oplus_{i\in I} \ e_{i} \otimes V_{0} \quad, \quad R_{V'}(V'_{0})=o'\otimes_{o}R_{V}(V_{0}) \ . $$
Donc   la pr\'esentation de $V'$ associ\'ee \`a $V'_{0}$ est finie, si et seulement si la pr\'esentation de $V$ associ\'ee \`a $V_{0}$ est finie. 
\end{proof}

\begin{definition} Soit $Z$ un sous-groupe ferm\'e du centre de $B$. On dit que $V\in  {\cal M}_{o}(B)$  est $Z$-localement finie si la sous-repr\'esentation de $Z$ engendr\'ee par $v$ dans $V|_{Z}$ est de type fini pour tout \'el\'ement $v\in V$. 
\end{definition}

C'est une propri\'et\'e un peu plus g\'en\'erale que d'avoir un caract\`ere central. Une repr\'esentation admissible $V\in {\cal M}_{o}(B)$ est $Z$-localement finie. Lorsque $B_{0}$ est un sous-groupe ouvert compact de $B$, une repr\'esentation $Z$-localement finie $V_{0}\in  {\cal M}_{o}^{tf}(B_{0}Z)$   est un $o$-module de type fini. 

\bigskip On note $ {\cal M}_{o-tor}$  la cat\'egorie   des $o$-modules de torsion. 
  \begin{lemma} \label{le3} On suppose que $o$ est un anneau de valuation discrete, d'uniformisante $p_{o}$.  
  
  1) Soit $V\in {\cal M}_{o}(B)$ tel que $p_{o} ^{n}V=0$ pour   $n\in \bf N$ (en particulier une repr\'esentation $V\in {\cal M}_{o-tor}^{tf}(B)$). Alors, 
 $V^{p_{o}=0} $ est admissible 
  si et seulement si $V$ est admissible. 
  
  2) Soit $V_{0}\in  {\cal M}_{o-tor}^{tf}(B_{0}Z)$ repr\'esentation $Z$-localement finie. Lorsque $B_{0}$ est un sous-groupe ouvert compact de $B$, alors $V_{0}$ est fini et  il existe un anneau de valuation discr\`ete $o'$, libre de rang fini sur $o$, tel que les sous-quotients irr\'eductibles de $V_{0,o'}$ sont absolument irr\'eductibles (en particulier $Z$ agit par un caract\`ere).

  \end{lemma}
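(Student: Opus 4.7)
Pour la partie 1), je proc\'ederais par r\'ecurrence sur l'entier $n\geq 1$ tel que $p_{o}^{n}V=0$, le cas $n=1$ \'etant trivial puisque alors $V=V^{p_{o}=0}$. Pour le pas inductif, on dispose de la suite exacte courte
$$0 \to V^{p_{o}=0} \to V \to p_{o}V \to 0$$
(la seconde fl\`eche \'etant induite par la multiplication par $p_{o}$), dans laquelle $p_{o}^{n-1}$ annule $p_{o}V$. Si $V^{p_{o}=0}$ est admissible, alors la sous-repr\'esentation $(p_{o}V)^{p_{o}=0}\subset V^{p_{o}=0}$ l'est \'egalement ($o$ \'etant noeth\'erien, tout sous-$o$-module d'un $o$-module de type fini est de type fini) ; l'hypoth\`ese de r\'ecurrence appliqu\'ee \`a $p_{o}V$ entra\^\i ne donc que $p_{o}V$ est admissible. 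Pour tout sous-groupe ouvert compact $H$ de $B$, la suite exacte d'invariants $0\to (V^{p_{o}=0})^{H}\to V^{H}\to (p_{o}V)^{H}$ fournit alors la type-finitude de $V^{H}$ sur $o$. La r\'eciproque est imm\'ediate, une sous-repr\'esentation d'une repr\'esentation admissible \'etant admissible.

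Pour la finitude de $V_{0}$ dans la partie 2), je choisirais un syst\`eme fini $v_{1},\ldots,v_{k}$ de g\'en\'erateurs de $V_{0}$ comme $o[B_{0}Z]$-module. Par $Z$-locale finitude, chaque $o[Z]\cdot v_{i}$ est un $o$-module de type fini, engendr\'e par certains \'el\'ements $w_{i,j}$. La centralit\'e de $Z$ dans $B_{0}Z$ fournit alors $V_{0}=\sum_{i,j}o[B_{0}]\cdot w_{i,j}$. Par lissit\'e, chaque $w_{i,j}$ est fix\'e par un sous-groupe ouvert de $B_{0}$, donc son $B_{0}$-orbite est finie par compacit\'e de $B_{0}$, et $o[B_{0}]\cdot w_{i,j}$ est un $o$-module de type fini. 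Par suite $V_{0}$ est un $o$-module de type fini ; \'etant de torsion sur $o$, une puissance de $p_{o}$ l'annule, et comme le corps r\'esiduel $k$ de $o$ est fini (puisque $L/\bf Q_{p}$ est finie), on conclut que $V_{0}$ est un $o$-module fini.

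Pour la seconde assertion, les sous-quotients irr\'eductibles de $V_{0}$ sont des $k[B_{0}Z]$-modules simples de dimension finie, en nombre fini de classes d'isomorphie puisque $V_{0}$ est fini. Pour un tel $W$, le lemme de Schur et le th\'eor\`eme de Wedderburn sur les corps finis donnent que $\End_{k[B_{0}Z]}(W)$ est une extension finie $k_{W}$ de $k$. On choisit alors une extension finie $k'$ de $k$ contenant tous les $k_{W}$ et suffisamment grande pour que les constituants simples de $k'\otimes_{k}W$ soient absolument irr\'eductibles pour chaque $W$ (possible gr\^ace au nombre fini de classes et \`a la perfection de $k$). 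On prend $o'$ l'extension non ramifi\'ee de $o$ de corps r\'esiduel $k'$, libre de rang $[k':k]$ sur $o$. L'exactitude du foncteur $o'\otimes_{o}-$ assure que les sous-quotients irr\'eductibles de $V_{0,o'}$ apparaissent comme constituants simples des $k'\otimes_{k}W$, donc sont absolument irr\'eductibles, et $Z$ y agit par un caract\`ere par Schur. Le point d\'elicat est le choix global de $k'$ convenant simultan\'ement \`a tous les sous-quotients simples, rendu possible par leur finitude en nombre.
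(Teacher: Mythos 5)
Your proposal is correct and follows essentially the same line of reasoning as the paper. For part~1), the paper constructs the admissibility of $V$ by climbing the filtration $V^{p_{o}=0}\subset V^{p_{o}^{2}=0}\subset\cdots$ using the exact sequences $0\to V^{p_{o}=0}\to V^{p_{o}^{i+1}=0}\to p_{o}V^{p_{o}^{i+1}=0}\to 0$ and stability of admissibility under extensions, whereas you descend via $p_{o}V$ and induct on the exponent $n$; these are two organizations of the identical idea, and both correctly invoke noetherianity of $o$ at the right places. For part~2), the paper's entire proof is ``Evident. Un quotient fini de $B_{0}$ agit sur $V_{0}$,'' and your paragraph is precisely the unpacking of that remark: $Z$-local finiteness plus centrality reduces generation over $o[B_{0}Z]$ to generation over $o[B_{0}]$, smoothness and compacity give finite orbits hence $o$-type-finitude, torsion plus finite residue field give finiteness, and a finite unramified extension $o'/o$ splitting the finitely many simple $k[B_{0}Z]$-constituents does the rest. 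No gap.
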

  
  \begin{proof} 1)  Si $V$ est admissible,  alors $V^{p_{o}=0}$ est  admissible car l'anneau $o$ est noetherien. 
  
  Si $V^{p_{o}=0} $ est admissible, alors la sous-repr\'esentation 
  $V':=V^{p_{o}^{2}=0} $ est admissible car $p_{o}V'$ contenue dans 
   $V^{p_{o}=0} $ est admissible, la multiplication par $p_{o}$ induit une suite exacte
   $$0 \to V^{p_{o}=0} \to V'\to p_{o}V' \to 0 \ , $$
   et l'admissibilit\'e est stable par extensions. En continuant cet argument on montre que $V^{p_{o}^{i}=0} $ est admissible pour tout entier $i\geq 1$, donc $V$ est admissible si $p_{o} ^{n}V=0$.
   
   2) Evident. Un quotient fini de $B_{0} $ agit sur $V_{0}$.
     \end{proof}

\begin{remark} \label{re3} Soit $V\in {\cal M}_{o-tor}^{tf}(B)$ une repr\'esentation  $Z$-localement finie.
La partie 2) du lemme \ref{le3} permet souvent de ramener  une question sur $V$, insensible aux extensions et \`a l'extension des scalaires, au cas o\`u $V$ est un quotient de $\ind_{B_{0}Z}^{B}(V_{0})$ avec $V_{0}\in  {\cal M}_{o-tor}^{tf}(B_{0}Z)$ absolument irr\'eductible o\`u $Z$ agit par un caract\`ere.
\end{remark}

En effet, on choisit  $V_{0}\in  {\cal M}_{o-tor}^{tf}(B_{0}Z)$ contenue dans $V|_{B_{0}Z} $ et g\'en\'eratrice. Puis $o'$ libre de rang fini sur $o$ tel que  $V_{0,o'}$  admette une suite de Jordan-H\"older   de quotients abolument irr\'eductibles $W_{o}^{(i)}$ pour $1\leq i \leq n$. Prenons l'image d'une telle suite par
 $\ind_{B_{0}Z}^{B}$ qui est un foncteur exact, puis prenons l'image dans $V_{o'}$ de la filtration obtenue de  $\ind_{B_{0}Z}(V_{0,o'})$ par l'application surjective canonique. On obtient ainsi une filtration finie de $V_{o'}$ dont les quotients sont des quotients de $\ind_{B_{0}Z}^{B}(W_{o}^{(i)})$ pour $1\leq i \leq n$.

\section  {Repr\'esentations  des monoides  $B^{+}$ et $(B^{+} )^{-1}$}

\subsection {Les d\'efinitions} Soit  $G:=GL(2,F)$ o\`u $F$ est un corps local non archim\'edien de caract\'eristique r\'esiduelle $p$. L'anneau des entiers $O_{F}$ de $F$ est local, d'id\'eal maximal engendr\'e par $p_{F}$, et de  corps r\'esiduel   $\bf F_{q}$ fini de cardinal $q$. 

\bigskip On consid\`ere les sous-groupes ferm\'es suivants de $G$ :  le centre $Z$ form\'e par  les matrices $x\id_{2}$ pour $x\in F^{*} $,  le sous-groupe triangulaire sup\'erieur $B=B(F)$,    le sous-groupe diagonal $T=T(F)$,  
le radical unipotent $U=U(F)$ de $B$,  les groupes compacts  $U_{0}:=U(O_{F})\ , \ T_{0}:=T(O_{F})$  et $B_{0}:= U_{0}T_{0}=B(O_{F})$.  On pose
$$ t :=\pmatrix {p_{F}&0 \cr 0&1} \  . $$ 
On consid\`ere le monoide $T^{+}:=T_{0}Zt^{\bf N}$, le monoide $B^{+}:=U_{0}T^{+} = B_{0}Zt^{\bf N}= B_{0}Zt^{\bf N}B_{0}$, et  $(B^{+})^{-1}$ le monoide inverse de $B^{+}$ form\'e des $b^{-1}$ pour $b\in B^{+}$.

     \begin{lemma}\label{5} Si $b^{-1}\in B^{+}$ alors $b (B-B^{+})  \subset (B-B^{+}) $.
  \end{lemma}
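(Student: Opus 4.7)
The plan is to prove the contrapositive: if $x \in B$ satisfies $bx \in B^+$, then $x \in B^+$. The argument will reduce to the fact that $B^+$ is a monoid (closed under multiplication), which is what makes the one-line algebraic identity $x = b^{-1}(bx)$ do the work.

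First I would verify that $B^+$ is closed under multiplication. Writing $B^+ = B_0 Z t^{\bf N} B_0$, the product of two such cosets is $B_0 Z t^n B_0 Z t^m B_0 = B_0 Z t^n B_0 t^m B_0$, since $Z$ is central. The key conjugation computation is that $t B_0 t^{-1} \subset B_0$: indeed, with $t = \pmatrix{p_F & 0 \cr 0 & 1}$ and $B_0 = \pmatrix{O_F^\times & O_F \cr 0 & O_F^\times}$, conjugation by $t$ multiplies the upper-right entry by $p_F$. Iterating, $t^n B_0 \subset B_0 t^n$ for every $n \in \bf N$, so $t^n B_0 t^m \subset B_0 t^{n+m}$, and the product lies in $B_0 Z t^{n+m} B_0 \subset B^+$.

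Next, given the hypothesis $b^{-1} \in B^+$, suppose $x \in B$ with $bx \in B^+$. Then $x = b^{-1}(bx)$ is a product of two elements of $B^+$, hence lies in $B^+$ by the monoid property just established. Contrapositively, $x \in B - B^+$ forces $bx \in B - B^+$, which is precisely the inclusion $b(B - B^+) \subset B - B^+$.

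I do not anticipate a real obstacle here; the only point requiring care is the conjugation check $t B_0 t^{-1} \subset B_0$, which ensures that the decomposition $B^+ = B_0 Z t^{\bf N} = B_0 Z t^{\bf N} B_0$ used in the paper is consistent and that $B^+$ is genuinely a sub-monoid. Once this is in place the lemma is immediate.
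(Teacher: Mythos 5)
Your proof is correct and is exactly the argument the paper leaves implicit under the label ``Evident'': the paper has already announced $B^{+}$ as a monoid, and your one-line identity $x = b^{-1}(bx)$ together with closure of $B^{+}$ under multiplication is the intended reasoning. Your verification of the monoid property via $tB_{0}t^{-1}\subset B_{0}$ is the right supporting computation and is what makes the decomposition $B^{+}=B_{0}Zt^{\bf N}=B_{0}Zt^{\bf N}B_{0}$ consistent.
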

  
  \begin{proof} Evident.
  \end{proof}

Soit $V\in {\cal M}_{o} (B)$  et $V_{0}\in {\cal M}_{o} (B_{0 }Z)$ contenu dans $V|_{B_{0}Z}$ et g\'en\'erateur de $V$. On note $R_{V}(V_{0})$ le noyau de la pr\'esentation $\ind_{B_{0}Z}^{B}(V_{0})\to V$.

    \begin{definition} On note
   $$ \Delta_{V}(V_{0}):=  <B^{+}V_{0} >  \cap <(B-B^{+})V_{0}> \ \ .$$
   \end{definition} 

  Le premier terme du membre de droite est stable par $B^{+}$ et le second terme est stable par $(B^{+})^{-1}$ par le lemme \ref{5}. Leur intersection $ \Delta_{V}(V_{0}) $ est stable par $B_{0}Z=B^{+}\cap (B^{+})^{-1}$. Elle est nulle si  $V=\ind^{B}_{B_{0}Z}(V_{0})$ (on identifie $V_{0}$ et $[1,V_{0}]$).
   On a
   $$V= <B^{+}V_{0}>  + <(B-B^{+})V_{0}>  \  .$$

 \begin{definition} On note  $$D_{V}(V_{0}):=  {<B^{+}V_{0}>   \over \Delta_{V} (V_{0}) }$$ muni de son action de $(B^{+})^{-1}$ provenant des isomorphismes.
  $$D_{V}(V_{0})\ \simeq \ {V\over   <(B-B^{+})V_{0}> } \  \simeq { \ind_{B_{0}Z}^{B}(V_{0})\over R_{V}(V_{0})+ (B-B^{+})[1,V_{0}]}\ . $$
   \end{definition}

 \bigskip Le caract\`ere de Teichm\"uller permet d'identifier $U(q):=U(\bf F_{q})$ \`a un syst\`eme de repr\'esentants dans $U_{0}$ de $U_{0} /tU_{0}t^{-1}=U_{0}/U(p_{F}O_{F})$.

\begin{definition}\label{et} On dit qu'une repr\'esentation $D\in  {\cal M}_{o}((B^{+})^{-1})$ est \'etale si l'application 
 \begin{equation} \label{etale}
 d \mapsto ((ut)^{-1}d)_{u\in U(q)}\ \ : \ \ D \ \to \ \oplus_{u\in U(q)} D
 \end{equation}
 est bijective.
 \end{definition}

Cette d\'efinition est duale de la d\'efinition usuelle d'une repr\'esentation \'etale  $D\in  {\cal M}_{o}(B^{+})$ : l'application  
$$ (d_{u})_{u\in U(q)} \mapsto \sum_{u}ut d_{u}\ : \ \oplus_{u\in U(q)} D\ \to D$$
est bijective.

\begin{lemma} La cat\'egorie  ${\cal M}_{o}^{et}((B^{+})^{-1})$  des repr\'esentations \'etales $D\in  {\cal M}_{o}((B^{+})^{-1})$ est ab\'elienne.
\end{lemma}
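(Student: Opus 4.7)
My plan is to show that ${\cal M}_{o}^{et}((B^{+})^{-1})$ is a full abelian subcategory of ${\cal M}_{o}((B^{+})^{-1})$ by verifying it contains the zero object and is stable under finite direct sums, kernels and cokernels computed in the ambient abelian category. Once these closure properties are established, the inclusion inherits the abelian structure, with all finite limits and colimits computable in the ambient category.

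The key observation I would exploit is that the \'etale structure map
$$\psi_{D} \ : \ D \ \to \ \bigoplus_{u\in U(q)} D, \qquad d \ \mapsto \ ((ut)^{-1}d)_{u\in U(q)},$$
is natural in $D$. Indeed, for any morphism $f:D\to D'$ in ${\cal M}_{o}((B^{+})^{-1})$, the $(B^{+})^{-1}$-equivariance of $f$ yields the commutative square
$$\psi_{D'}\circ f \ = \ \Bigl(\bigoplus_{u\in U(q)} f\Bigr)\circ\psi_{D}.$$
Since $U(q)$ is a finite set, the endofunctor $D\mapsto\bigoplus_{u\in U(q)}D$ of ${\cal M}_{o}((B^{+})^{-1})$ is additive and exact.

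From these two inputs, closure under kernels and cokernels is essentially formal. Let $f:D\to D'$ be a morphism between \'etale representations, and let $K$, $C$ denote its kernel and cokernel computed in ${\cal M}_{o}((B^{+})^{-1})$. The restriction of $\psi_{D}$ to $K$ takes values in $\bigoplus_{u}K$, since equivariance of $f$ shows that $(ut)^{-1}$ preserves $K$; this yields a map $\psi_{K}:K\to\bigoplus_{u}K$. Exactness of the direct sum functor identifies $\bigoplus_{u}K$ with $\Ker(\bigoplus_{u}f)$, and the bijectivity of $\psi_{D}$ and $\psi_{D'}$ combined with the commutative square above force $\psi_{K}$ to be bijective. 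The argument for $C$ is dual: naturality induces $\psi_{C}:C\to\bigoplus_{u}C$, exactness identifies this target with $\Coker(\bigoplus_{u}f)$, and a parallel diagram chase yields the bijectivity of $\psi_{C}$.

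Closure under finite direct sums and the existence of a zero object are immediate, since $\psi$ commutes with finite direct sums and $\psi_{0}$ is the identity of $0$. Combining these stability properties, the canonical morphism from coimage to image in ${\cal M}_{o}^{et}((B^{+})^{-1})$ coincides with the one computed in ${\cal M}_{o}((B^{+})^{-1})$, hence is an isomorphism. The verification presents no serious obstacle; conceptually the only substantive point is that $\psi$ is a natural transformation between two exact endofunctors of ${\cal M}_{o}((B^{+})^{-1})$, namely the identity and $\bigoplus_{u\in U(q)}$, so that the full subcategory on which this natural transformation is invertible is automatically abelian by a general categorical argument.
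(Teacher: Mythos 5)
Your proof is correct and follows essentially the same approach as the paper, which simply asserts in one line that the kernel and image of a morphism between étale representations are clearly étale; you have carefully filled in that "clear" step by exhibiting $\psi$ as a natural transformation from the identity to the exact endofunctor $\bigoplus_{u\in U(q)}$ and chasing the resulting diagrams for kernels and cokernels.
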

\begin{proof}
Soit $f:D_{1}\to D_{2}$ un morphisme dans ${\cal M}_{o}^{et}((B^{+})^{-1})$  des repr\'esentations \'etales.
Il est clair que le noyau et l'image de $f$ sont \'etales.
\end{proof}

 \begin{lemma} \label{3}   Lorsque l'anneau commutatif $o$ est noetherien, 
 la cat\'egorie  ${\cal M}_o(B_{0}Z)$ est noetherienne.
\end{lemma}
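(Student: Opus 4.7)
L'id\'ee est d'exploiter la structure de produit direct $B_0 Z \simeq B_0 \times \Z$ pour ramener toute repr\'esentation lisse de type fini \`a un module de type fini sur un anneau de polyn\^omes de Laurent noetherien.

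\bigskip

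D'abord, je poserais $z_0 := \diag(p_F,p_F)$, central dans $G$. Cet \'el\'ement engendre $Z$ modulo $Z \cap B_0 = O_F^\times \id_2 \subset B_0$ et, puisque $p_F^k \notin O_F^\times$ pour $k\neq 0$, on a $B_0 \cap \langle z_0 \rangle = \{1\}$. Cela donne la d\'ecomposition interne $B_0 Z = B_0 \times \langle z_0\rangle \simeq B_0 \times \Z$.

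\bigskip

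Soit ensuite $V \in {\cal M}_o^{tf}(B_0 Z)$, engendr\'ee par des vecteurs $v_1, \ldots, v_n$. Chaque $v_i$ est fixe par un sous-groupe ouvert $H_i$ de $B_0$, et comme $B_0$ est profini, on peut choisir un sous-groupe ouvert $H$ distingu\'e dans $B_0$, contenu dans $\bigcap_i H_i$. La centralit\'e de $z_0$ et la normalit\'e de $H$ entra\^inent que chaque $o[B_0 Z] v_i$ est form\'e de vecteurs $H$-invariants : pour $h \in H$ et $b z_0^k \in B_0Z$, on a $h b z_0^k v_i = b(b^{-1}hb) z_0^k v_i = b z_0^k v_i$. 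Donc $V = V^H$, l'action de $B_0 Z$ se factorise \`a travers le quotient fini-par-$\Z$ $B_0/H \times \Z$, et $V$ devient un module de type fini sur l'anneau $R := o[B_0/H][z_0^{\pm 1}]$, les sous-repr\'esentations lisses de $V$ co\"incidant avec ses $R$-sous-modules.

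\bigskip

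Pour conclure, je remarquerais que $o[B_0/H]$ est un $o$-module de type fini, donc un anneau noetherien \`a gauche et \`a droite (sans \^etre n\'ecessairement commutatif) ; la centralit\'e de $z_0$ permet alors d'appliquer le th\'eor\`eme de la base de Hilbert pour les anneaux de polyn\^omes de Laurent et d'en d\'eduire que $R$ est noetherien. Le $R$-module $V$ \'etant de type fini, il est $R$-noetherien, ce qui donne la condition de cha\^ine ascendante sur ses sous-repr\'esentations. Il n'y a pas d'obstacle v\'eritable : le seul point \`a soigner est la v\'erification de la stabilit\'e de la $H$-invariance sous toute l'action de $B_0 Z$, gr\^ace \`a la centralit\'e de $z_0$ et \`a la normalit\'e de $H$, qui ram\`ene la question au cadre classique des polyn\^omes de Laurent sur un anneau noetherien.
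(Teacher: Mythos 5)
Your proof is correct and takes essentially the same route as the paper: decompose $B_0Z = B_0 \times (p_F\id_2)^{\bf Z}$, exploit smoothness and compactness of $B_0$ to reduce a finitely generated representation to a finitely generated module over a noetherian ring in one Laurent variable, then conclude. The only (harmless) difference is that the paper simply \emph{forgets} the $B_0$-action, observing that a $B_0Z$-representation is finitely generated iff it is finitely generated after restriction to the central subgroup $(p_F\id_2)^{\bf Z}$, and then invokes noetherian-ness of the commutative ring $o[{\bf Z}]$; you instead retain the finite quotient $B_0/H$ and work over $R = o[B_0/H][z_0^{\pm 1}]$, which costs you the (valid but slightly heavier) non-commutative Hilbert basis theorem with central indeterminate, but buys nothing extra for this lemma.
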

\begin{proof} 
 On a $B_{0}Z=B_{0}(p_{F}\id_{2})^{\bf Z}$ et la compacit\'e de $B_{0}$ implique qu'une repr\'esentation $V\in {\cal M}_o(B_{0}Z)$ est de type fini si et seulement si sa restriction au sous-groupe $(p_{F}\id_{2})^{\bf Z}$ est de type fini. La cat\'egorie ${\cal M}_{o}((p_{F}\id_{2})^{\bf Z})$ s'identifie \`a la cat\'egorie des modules de 
 l'anneau de polyn\^omes $o[{\bf Z}]$  qui est noetherien car $o$ est noetherien. Donc la cat\'egorie ${\cal M}_o^(B_{0}Z)$ est noetherienne. 
 \end{proof}

  \subsection{Les propri\'et\'es  de $D_{V}(V_{0})$ modulo un $o[B_{0}Z]$-module de type fini}

Dor\'enavant, $o$ est un anneau noetherien. Soit 
  ${\cal M}_o^{tf}(B_{0}Z)$ la  sous-cat\'egorie  de Serre des repr\'esentations de type fini  dans  ${\cal M}_o(B_{0}Z)$.  Nous  allons montrer l'existence,  \`a une ``petite erreur'' pr\`es, i.e. modulo   ${\cal M}_o^{tf}(B_{0}Z)$,  d'un foncteur exact contravariant   $$V\mapsto  D_{V}(V_{0}) \equiv <B^{+}V_{0}>$$  des repr\'esentations  $V\in {\cal M}_o(B)$ de pr\'esentation finie vers  ${\cal M}_{o}^{et} ((B^{+})^{-1})$. 

\begin{theorem}  On suppose $o$ noetherien. Soit $V\in {\cal M}_o(B)$ de pr\'esentation finie  $\ind_{B_{0}Z}^{B}(V_{0})\to V$ avec 
$V_{0}\subset V|_{B_{0}Z}$.  Alors    modulo $ {\cal M}_o^{tf}(B_{0}Z)$,  

a) $\Delta_{V} (V_{0})\equiv 0$ , 

b) La repr\'esentation $D_{V}(V_{0})\in {\cal M}_o((B^{+})^{-1})$ est \'etale et  ne d\'epend pas du choix de $V_{0}$.

c) 
 Une suite exacte de repr\'esentations de pr\'esentation finie dans   ${\cal M}_o(B)$,
 $$ 0 \to V' \to V \to V'' \to 0$$
 et $(V'_{0}, V_{0}, V''_{0})$ comme dans la proposition \ref{suite}, 
induisent une suite exacte dans ${\cal M}_o((B^{+})^{-1})$ modulo $ {\cal M}_o^{tf}(B_{0}Z)$,
  $$ 0 \to D_{V'}(V'_{0}) \to  D_{V}(V_{0}) \to  D_{V''}(V''_{0}) \to 0 \ .$$
 
 \end{theorem}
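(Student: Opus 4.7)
The strategy is to reduce to the compact-induction case $W := \ind_{B_{0}Z}^{B}(V_{0})$, where direct computation is tractable, and then transfer to $V = W/R$ with $R := R_{V}(V_{0})$, exploiting that $R$ is finitely generated as an $o[B]$-module. A direct computation with the $o$-module decomposition $W = W^{+}\oplus W^{-}$ (indexed respectively by $B^{+}/B_{0}Z$ and $(B - B^{+})/B_{0}Z$), together with the associated projections $p_{\pm}$, identifies $\Delta_{V}(V_{0})$ with the image in $V$ of $p_{+}(R)$, or equivalently (since $\pi: W \to V$ kills $R$) of $-p_{-}(R)$.

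For part a), I would show that $\pi(p_{-}(R))$ is finitely generated over $o[B_{0}Z]$. Let $r_{1},\ldots,r_{n}$ be $o[B]$-generators of $R$, each with finite support in $B/B_{0}Z$. Using that $B^{+}$ is a monoid and that $(B^{+})^{-1}(B - B^{+})\subset B - B^{+}$ (lemma \ref{5}), one verifies that $b r_{i}\in W^{+}$ (so $p_{-}(b r_{i})=0$) whenever $b$ lies sufficiently deep in $B^{+}$ (beyond a threshold determined by the finite support of $r_{i}$), and symmetrically that $p_{+}(b r_{i})=0$ for $b$ deep in $(B^{+})^{-1}$. A case analysis on the remaining ``boundary'' region of $B$ shows that non-trivial contributions arise from only finitely many $B_{0}Z$-cosets, each landing in a $B_{0}Z$-translate of $V_{0}$ inside $V$. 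Summing gives $\Delta_{V}(V_{0})\in {\cal M}_{o}^{tf}(B_{0}Z)$.

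For part b), I would first verify directly that $D_{W}(V_{0})$ is \'etale modulo ${\cal M}_{o}^{tf}(B_{0}Z)$: on the explicit $o$-basis of $W^{+}$ indexed by $\{(n,u): n\ge 0,\ u\in O_{F}/p_{F}^{n}O_{F}\}$, the map $d\mapsto((ut)^{-1}d)_{u\in U(q)}$ has kernel exactly the level-zero piece, isomorphic to $V_{0}$ (finitely generated over $o[B_{0}Z]$), and is surjective by a bijection of the index sets. For general $V$, apply the snake lemma to the short exact sequence $0 \to \bar R \to D_{W}(V_{0}) \to D_{V}(V_{0}) \to 0$, where $\bar R := p_{+}(R)\subset W^{+}$ is the image of $R$ in $D_{W}(V_{0})$ and is $(B^{+})^{-1}$-stable by lemma \ref{5}. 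The diagram of \'etale maps yields surjectivity of the \'etale map $\psi_{V}$ for $D_{V}(V_{0})$ and squeezes $\ker\psi_{V}$ between $V_{0}$ and $\mathrm{coker}\,\psi_{\bar R}$, the latter finitely generated over $o[B_{0}Z]$ by the same analysis as in a). Independence of the choice of $V_{0}$ then follows from the common-refinement corollary: any two presentations are dominated by a third, and functoriality of the construction along morphisms of presentations gives canonical identifications modulo ${\cal M}_{o}^{tf}(B_{0}Z)$.

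For part c), the exactness of $\ind_{B_{0}Z}^{B}$ applied to $0 \to V'_{0} \to V_{0} \to V''_{0} \to 0$ yields $0 \to W' \to W \to W'' \to 0$, and a snake-lemma chase gives an exact sequence $0 \to R' \to R \to R'' \to 0$ of kernels. Taking $p_{+}$ and passing to the $D$-quotients produces the claimed exact sequence of $D$-modules modulo ${\cal M}_{o}^{tf}(B_{0}Z)$; independence from b) ensures compatibility between the three choices. The principal obstacle is the combinatorial analysis in a) --- pinning down how left multiplication by $B$ shuffles the support of each $r_{i}$ between $B^{+}/B_{0}Z$ and $(B - B^{+})/B_{0}Z$, and showing that boundary contributions lie in only finitely many $B_{0}Z$-cosets. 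Once a) is in hand, parts b) and c) follow by comparatively formal diagram chases.
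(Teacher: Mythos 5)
Your overall strategy is the paper's: decompose $W := \ind_{B_0Z}^B(V_0)$ as $W^+ \oplus W^-$ along $B^+/B_0Z$ versus $(B-B^+)/B_0Z$, identify $\Delta_V(V_0)$ with the image of $p_+(R)$ in $V$ (so that $0 \to R\cap W^+ \to p_+(R) \to \Delta_V(V_0) \to 0$ is exact --- this is the paper's Lemma \ref{0'}), and bound everything by a support analysis of finitely many $o[B]$-generators of $R$. Your sketch of part a) matches the paper's Proposition \ref{1} b), including the decisive point that the ``mixed'' boundary region of $B$ occupies only finitely many $B_0Z$-cosets.

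However, the assessment that b) and c) are then ``comparatively formal diagram chases'' is too optimistic; each requires a further quantitative input not automatic from a). Write $\psi_M$ for the \'etale comparison map $m\mapsto ((ut)^{-1}m)_{u\in U(q)}$. For b), the snake lemma applied to $0 \to \bar R \to D_W \to D_V \to 0$ (with $\bar R := p_+(R)$) gives an exact sequence $\Ker\psi_{D_W} \to \Ker\psi_{D_V} \to \Coker\psi_{\bar R} \to 0$. You correctly identify $\Ker\psi_{D_W}\cong V_0$ as small, but the needed fact that $\Coker\psi_{\bar R}$ lies in ${\cal M}_o^{tf}(B_0Z)$ does not follow from a), which controls $S^+/R^+$, not the cokernel of $\psi$ restricted to $\bar R = S^+$; your phrase ``by the same analysis as in a)'' is asserting a different statement. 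The paper sidesteps this by the direct computation of Lemma \ref{111}, evaluating $\cap_{u\in U(q)} ut\,C$ with $C = R_V(V_0) + (B-B^+)[1,V_0]$ and invoking the inclusion $S^+ \subset R^+ + <B_0Z X'>$ established during the proof of a). For c), middle exactness of $0 \to p_+(R') \to p_+(R) \to p_+(R'') \to 0$ requires $p_+(R)\cap W'^+ \equiv p_+(R')$ modulo ${\cal M}_o^{tf}(B_0Z)$, which is not a consequence of the snake lemma alone; this is exactly the paper's Proposition \ref{11} and needs its own argument. Neither point is fatal --- both recycle the finite-generator combinatorics you describe --- but they must be carried out explicitly, not deduced from a) by diagram chasing.
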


Le reste de ce chapitre est consacr\'e \`a la d\'emonstration. 

\bigskip   Nous d\'ecrivons la repr\'esentation $\Delta_{V}(V_{0})$ de $B_{0}Z$, comme un quotient de sous-espaces de $\ind_{B_{0}Z}^{B}(V_{0})$. Pour cela,
on consid\`ere la projection  $S^{-}_{V}(V_{0})$ de $R_{V}(V_{0})$ dans $<(B-B^{+}) [1,V_{0}] >$
\begin{equation} \label{proj}
R_{V}(V_{0})+< B^{+} [1,V_{0}] > = S^{-}_{V}(V_{0}) \oplus < B^{+}  [1,V_{0}] > \ ,
\end{equation}
  l'intersection 
$R^{+}_{V}(V_{0}):= \ R_{V}(V_{0})\  \cap \ <B^{+}[1,V_{0}] >  $,  ainsi que la projection  $S_{V}^{+}(V_{0})$ de $R_{V}(V_{0})$ dans $< B^{+} [1,V_{0}] >$ et   l'intersection 
$R^{-}_{V}(V_{0}):= \ R_{V}(V_{0})\  \cap \ <(B-B^{+})[1,V_{0}] >  \    .$

\begin{lemma} \label{0'} On a les suites exactes naturelles
$$0 \to R^{\e}_{V}(V_{0}) \to S^{\e}_{V}(V_{0}) \to \Delta_{V}(V_{0}) \to 0 \ $$
avec $\e = +$ ou $\e = -$.
\end{lemma}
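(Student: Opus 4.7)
Plan. Tout repose sur la d\'ecomposition en somme directe de $o$-modules
$$\ind_{B_{0}Z}^{B}(V_{0}) \ = \ <B^{+}[1,V_{0}]> \ \oplus \ <(B-B^{+})[1,V_{0}]>$$
issue de la partition $B = B^{+} \sqcup (B - B^{+})$. Notons $\pi^{+}, \pi^{-}$ les projections correspondantes et $\pi : \ind_{B_{0}Z}^{B}(V_{0}) \to V$ la pr\'esentation; l'image par $\pi$ des deux facteurs est $<B^{+}V_{0}>$ et $<(B-B^{+})V_{0}>$ respectivement. Par d\'efinition ((\ref{proj}) pour $\e = -$, et l'\'enonc\'e sym\'etrique pour $\e = +$), on a l'identification $S_{V}^{\e}(V_{0}) = \pi^{\e}(R_{V}(V_{0}))$.

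On consid\`ere le morphisme $B_{0}Z$-\'equivariant $\phi^{\e} : S_{V}^{\e}(V_{0}) \to V$ d\'efini par $\phi^{\e}(x) := \pi(x)$. Pour $r \in R_{V}(V_{0})$ de d\'ecomposition $r = r^{+} + r^{-}$, la relation $\pi(r) = 0$ donne $\pi(r^{+}) = -\pi(r^{-}) \in <B^{+}V_{0}> \cap <(B-B^{+})V_{0}> = \Delta_{V}(V_{0})$; donc $\phi^{\e}$ se factorise en un morphisme $S_{V}^{\e}(V_{0}) \to \Delta_{V}(V_{0})$, toujours not\'e $\phi^{\e}$.

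Le noyau de $\phi^{\e}$ est $S_{V}^{\e}(V_{0}) \cap R_{V}(V_{0})$, qui co\"incide avec $R_{V}(V_{0}) \cap <B^{+}[1,V_{0}]> = R_{V}^{+}(V_{0})$ pour $\e = +$ et avec $R_{V}(V_{0}) \cap <(B-B^{+})[1,V_{0}]> = R_{V}^{-}(V_{0})$ pour $\e = -$; les inclusions r\'eciproques sont triviales (un \'el\'ement de $R_{V}^{\e}(V_{0})$ est d\'ej\`a dans le bon facteur, donc fix\'e par $\pi^{\e}$, donc dans $S_{V}^{\e}(V_{0})$). Pour la surjectivit\'e, \'etant donn\'e $v \in \Delta_{V}(V_{0})$, on choisit des rel\`evements $w^{+} \in <B^{+}[1,V_{0}]>$ et $w^{-} \in <(B-B^{+})[1,V_{0}]>$ avec $\pi(w^{+}) = \pi(w^{-}) = v$. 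Alors $r := w^{+} - w^{-} \in R_{V}(V_{0})$ donne $\pi^{+}(r) = w^{+}$ et $\pi^{-}(r) = -w^{-}$, d'o\`u $\phi^{+}(w^{+}) = v$ et $\phi^{-}(-w^{-}) = -v$; comme $\Delta_{V}(V_{0})$ est un $o$-module, la surjectivit\'e s'ensuit dans les deux cas.

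L'argument est une traque diagrammatique directe, sans difficult\'e majeure. Le seul point \`a garder au clair est la distinction entre les intersections $R_{V}^{\pm}(V_{0}) = R_{V}(V_{0}) \cap <\cdot>$ et les projections $S_{V}^{\pm}(V_{0}) = \pi^{\pm}(R_{V}(V_{0}))$; une fois la d\'ecomposition en somme directe admise, tout est purement formel.
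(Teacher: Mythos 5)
Your proof is correct and follows essentially the same route as the paper: the paper also takes $x \in \Delta_{V}(V_{0})$, chooses lifts $y\in B^{+}[1,V_{0}]$, $z\in (B-B^{+})[1,V_{0}]$, observes $y-z\in R_{V}(V_{0})$ and deduces $y\in S^{+}_{V}(V_{0})$, $z\in S^{-}_{V}(V_{0})$; it declares the containment of the image in $\Delta_{V}(V_{0})$ and the identification of the kernel with $R^{\e}_{V}(V_{0})$ to be clear, whereas you spell them out via the direct-sum decomposition, but the underlying argument is identical.
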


\begin{proof}  Il est clair que l'image de  $S^{\e}_{V}(V_{0})$  par l'application canonique est contenue dans $\Delta_{V}(V_{0}) $. Inversement, soit  $x \in  \Delta_{V}(V_{0}) $. On choisit  $y\in B^{+}[1,V_{0}]$ et $z\in (B-B^{+})[1,V_{0}]$ d'image $x$. On a  $y-z\in R_{V}(V_{0})$. On en d\'eduit que $y\in S^{+}_{V}(V_{0})$ et $z\in S^{-}_{V}(V_{0})$. 
\end{proof}

 \begin{proposition} \label{1} Soient $V\in {\cal M}_o(B)$ et   $V_{0} , V'_{0} \in {\cal M}_o(B_{0}Z)$  contenues dans $V|_{B_{0}Z}$ et g\'en\'eratrices de  $V$. 
 
  a) Si $V_{0}$ et $V'_{0} $ sont de type fini, alors  $<B^{+}V_{0}> $  et  $<B^{+}V'_{0}>$ sont isomorphes  modulo $ {\cal M}_o^{tf}(B_{0}Z) $.
  
 b) Si $R_{V}(V_{0})$ est de type fini, alors  $ \Delta_{V} (V_{0}) \in  {\cal M}_o^{tf}(B_{0}Z) $.
   \end{proposition}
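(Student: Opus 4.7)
Je me ramène d'abord au cas $V_0 \subseteq V'_0$ en observant que $V_0 + V'_0 \in {\cal M}_o^{tf}(B_0 Z)$ engendre encore $V$, et en comparant successivement chacun de $\langle B^+ V_0\rangle, \langle B^+ V'_0\rangle$ à $\langle B^+(V_0+V'_0)\rangle = \langle B^+V_0\rangle + \langle B^+V'_0\rangle$. Sous l'hypothèse $V_0 \subseteq V'_0$, il s'agit de montrer que $\langle B^+V'_0\rangle / \langle B^+V_0\rangle$ est de type fini sur $o[B_0 Z]$. L'ingr\'edient essentiel est la d\'ecomposition $B = \bigcup_{n\geq 0} t^{-n} B^+$, cons\'equence de $U = \bigcup_n t^{-n} U_0 t^n$ et de $B = U T$. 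Comme $V_0$ engendre $V$ sur $o[B]$ et que $V'_0$ est engendr\'e sur $B_0 Z$ par $v'_1, \ldots, v'_m$ avec $v'_j = \sum_k b_{jk} w_{jk}$ (somme finie, $b_{jk} \in B$, $w_{jk} \in V_0$), je choisis $N \geq 0$ commun tel que $t^N b_{jk} \in B^+$ pour tous $j, k$, ce qui donne $t^N v'_j \in \langle B^+ V_0\rangle$ ; la $B^+$-stabilit\'e de $\langle B^+V_0\rangle$ entra\^\i ne $t^n v'_j \in \langle B^+V_0\rangle$ pour tout $n \geq N$. Enfin, puisque $B^+ = \bigsqcup_{n\geq 0} B_0 Z\, t^n$ (la disjonction venant de $t^n \notin T_0 Z$ pour $n \neq 0$) et $t^n B_0 Z t^{-n} \subseteq B_0 Z$, on a que $\langle B^+V'_0\rangle$ est engendr\'e sur $o[B_0 Z]$ par $\{t^n v'_j : n \geq 0, 1\leq j \leq m\}$, donc, modulo $\langle B^+V_0\rangle$, par la famille finie $\{t^n v'_j : 0 \leq n < N, 1\leq j \leq m\}$.

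\textbf{Partie b).} Par le lemme \ref{0'}, $\Delta_V(V_0) \cong R_V(V_0) / (R^+_V(V_0) \oplus R^-_V(V_0))$ comme $B_0 Z$-module, la surjection envoyant $\xi$ sur l'image de $\xi^+$ dans $V$ \'etant $B_0 Z$-\'equivariante. Soit $\xi_1, \ldots, \xi_m$ un syst\`eme fini de g\'en\'erateurs de $R_V(V_0)$ sur $o[B]$, de supports finis $F_i \subseteq B/B_0 Z$. Je choisis $N, M \geq 0$ tels que $t^N F_i \subseteq B^+$ et $t^{-M} F_i \subseteq B - B^+$ pour tout $i$ (le second \'etant possible par analyse directe : pour $f = u_0 t_0 z t^k \in B^+$ fix\'e, $t^{-M} f \notin B^+$ d\`es que $M$ exc\`ede $k$ et le niveau de $u_0$). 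Alors $t^n \xi_i \in R^+_V(V_0)$ pour $n \geq N$ et $t^{-n}\xi_i \in R^-_V(V_0)$ pour $n \geq M$, donc ces \'el\'ements ont une image nulle dans $\Delta_V(V_0)$. L'argument suit alors le m\^eme sch\'ema que la partie a) mais appliqu\'e aux g\'en\'erateurs de $R_V(V_0)$ : on vise \`a ramener l'image de chaque $b\xi_i$, modulo $B_0 Z$-action, \`a celle d'un $t^n \xi_i$ avec $-M < n < N$, ce qui fournirait un nombre fini de g\'en\'erateurs sur $o[B_0 Z]$.

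\textbf{Obstacle principal.} La difficult\'e est que la projection $\xi \mapsto \xi^\pm$ n'est que $B_0 Z$-\'equivariante, jamais $B$-\'equivariante : pour $c \in B^+$, les termes crois\'es $(c\xi_i^-)^+$ peuvent \^etre non nuls (par exemple $t\cdot t^{-1} = 1 \in B^+$), et pour $b = t^{-n}c$ les projections $(t^{-n}\eta^+)^+$ font obstacle \`a une r\'eduction na\"\i ve. La strat\'egie sera d'exploiter la $B^+$-stabilit\'e de $R^+_V(V_0)$ et la $(B^+)^{-1}$-stabilit\'e de $R^-_V(V_0)$ (lemme \ref{5}) pour contr\^oler ces termes modulo $R^\pm_V(V_0)$, conjugu\'ees \`a la finitude des supports $F_i$ et \`a la filtration $U = \bigcup_l t^{-l} U_0 t^l$. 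L'analyse combinatoire des intersections $b F_i \cap B^+$ pour $b \in B$ constitue la partie la plus d\'elicate : il faut montrer qu'apr\`es troncature en $t$ dans la fen\^etre $-M < n < N$, seules un nombre fini de classes dans $B_0 Z \backslash B$ donnent des contributions non triviales dans $\Delta_V(V_0)$.
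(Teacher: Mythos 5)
Your part a) is correct and in substance coincides with the paper's argument: both hinge on the fact that any finite subset of $B$ is pushed into $B^+$ by a sufficiently high power of $t$, together with the decomposition $B^+ = \bigsqcup_{n\geq 0}B_0Zt^n$ (so that $B^+ - B^+t^N$ is a finite union of $B_0Z$-cosets). Your preliminary reduction to $V_0 \subseteq V'_0$ via $V_0 + V'_0$ is a clean variant; the paper instead argues symmetrically, producing $M_1, M_2 \in {\cal M}_o^{tf}(B_0Z)$ with $\langle B^+V_0\rangle \subset \langle B^+V'_0\rangle + M_1$ and conversely, then taking $M = M_1 + M_2$. These are the same idea.

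Part b), however, is not a proof. You set up the right framework (the identification $\Delta_V(V_0)\cong R_V(V_0)/(R^+_V(V_0)\oplus R^-_V(V_0))$ from Lemma \ref{0'} is correct, as is the choice of a finite generating set with finite supports and the two truncation bounds $N, M$), and you correctly name the obstruction — the projections $\xi\mapsto\xi^\pm$ are only $B_0Z$-equivariant, so acting by $b\in B$ on a generator and then projecting does not reduce to acting on the projections. But you then declare this ``la partie la plus d\'elicate'' and stop, which is precisely where the content of the proposition lies. What is missing is the concrete combinatorial control of the ``boundary'' elements: one must show that the set of $b\in B$ for which the support $bF_i$ meets both $B^+$ and $B-B^+$ is contained in finitely many cosets $B_0Zg$. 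The paper does this by introducing, for each pair $y,z$ in a finite subset $Y$ of $B$ supporting the generators, the set $B_{y,z}=\{b : by\in B^+,\ bz\notin B^+\}$, and showing via the three explicit conditions on $(u_b,t_b)$ in the Iwahori-type decomposition $b=u_bt_b$ that $t_b$ is forced into a bounded $t$-window and that the resulting $u_b$ lie in a compact set, whence $B_{y,z}\subset B_0ZC$ for a finite $C$. This gives $S^+_V(V_0)\subset R^+_V(V_0)+\langle B_0ZX'\rangle$ for a finite $X'$, and then Lemma \ref{0'} yields the finiteness of $\Delta_V(V_0)$. Without this compactness argument (or an equivalent), your part b) is only a plan, not a proof.
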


\begin{proof} 

a) Nous allons montrer qu'il existe  $M\in  {\cal M}_o^{tf}(B_{0}Z)$ contenu dans $V|_{B_{0}Z}$ tel que   
$$<B^{+}V_{0}> +M \  = \  <B^{+} V'_{0}> +M \ . $$

 a1)  Comme $V_{0}  \in {\cal M}_o^{tf}(B_{0}Z)$,  il existe une partie  finie $(Y,X'_{0})$ de $(B,V'_{0})$ telle que $<YX'_{0}>$ contienne une partie g\'en\'eratrice finie $X_{0}$ de $V_{0}$. Il existe
$n\in \bf N$ tel que $t^{n}Y\subset B^{+}$. On a donc 
$$<B^{+}t^{n}V_{0}> \ = \ <B^{+}t^{n}B_{0}Z X_{0}>\ = \ <B^{+}t^{n}X_{0}>\ \subset \ <B^{+} V'_{0}> \ .$$ L'ensemble $B^{+}-B^{+}t ^{n}$ est l'union finie disjointe des ensembles $B_{0}Zt^{i}$  pour $0\leq i \leq n-1$. On a 
 $$<B^{+}V_{0}> \ = \  <B^{+}t^{n}V_{0}> \ + \ M_{1} \quad , \quad M_{1} :=\sum_{i=0}^{n-1}  <B_{0}Zt^{i}V_{0}> \ . $$
 Clairement $M_{1}\in  {\cal M}_o^{tf}(B_{0}Z)$ est contenu dans $V|_{B_{0}Z}$ et   $$<B^{+}V_{0}> \ \subset \  <B^{+} V'_{0}> \ +\ M_{1} \ . $$ 

a2)  Comme $V'_{0}  \in {\cal M}_o^{tf}(B_{0}Z)$, par sym\'etrie en $V_{0}, V'_{0}$, on a aussi  $$<B^{+}V'_{0}> \ \subset \  <B^{+} V_{0}> +M_{2}$$ o\`u $M_{2}\in  {\cal M}_o^{tf}(B_{0}Z)$ est contenu dans $V|_{B_{0}Z}$. 

On prend $M:=M_{1}+M_{2}$.

\bigskip b)  
On choisit  une partie finie $(Y, X_{0})$ de $(B, V_{0})$ telle que $<Y[1,X_{0}] >$ contienne une partie g\'en\'eratrice finie $X $ de $R_{V}(V_{0})$.

  Soit $B_{1}$ l'ensemble des $b\in B$ tels que $bY \subset B^{+}$. Pour tout $b\in B_{1}$ nous avons $bX\subset R^{+}(V_{0})$. 
     
   Soit $B_{2}$ l'ensemble des $b\in B$ tels que $bY \subset (B-B^{+})$. Pour tout $b\in B_{2}$ la projection de  $bX $ dans $B^{+}[1,V_{0}]$ est nulle. 

    Pour tout $y,z \in Y$ soit $B_{y,z}$  l'ensemble des $b\in B$ tels que 
    $$by  \subset B^{+} \ \ , \ \ bz \subset (B-B^{+}) \ \ .$$
    Le compl\'ementaire $B_{3}$ de $B_{1}\cup B_{2}$ dans $B$ est l'union finie des $B_{y,z}$ pour tout $y,z\in Y$. 
    
  Pour $b\in B$,  notons $u_{b} \in U $ et $t_{b} $ les composantes de $b $ dans la d\'ecomposition $B=UT$.  Soit  $b\in B$ appartenant \`a $B_{y,z}$, i.e.  $u_{b},t_{b}$ satisfont les 3 conditions:
  
  1) \ $t_{b}t_{y}\in T^{+} $ ,
  
  2) \ $u_{b}t_{b}u_{y}t_{b}^{-1} \in  U_{0}$ , 
  
  3) \ $t_{b}t_{z} \in T^{+} $ implique $u_{b}t_{b}u_{z}t_{b}^{-1}\  \not\in \  U_{0}$ \ . 

\noindent Pour $y\in Y$ notons $n_{y}\in \bf Z$  l'entier tel  que $ T^{+}t_{y}^{-1}= T^{+}t^{n_{y}}$.
 Choisissons un entier $n_{Y} \in \bf Z$  
 tel que 
 
 a)   $t^{n_{Y}}t_{y }\in T^{+}$ pour tout $y\in Y$,
 
 b)  $t^{n_{Y}}u_{y}t^{-n_{Y}} \in U_{0}$   pour tout $y\in Y$.
   
 \noindent   
  
  On ne peut pas avoir $t_{b}\in   T^{+}t ^{n_{Y}}$, car pour  $t_{b}\in   T^{+}t ^{n_{Y}}$ les conditions 2) et 3) sont 
$$u_{b}\in U_{0} \ \ , \ \ t_{b}t_{z}\not\in T^{+} \ \ ,$$
ce qui est incompatible. Par la condition 1), on a 
$$t_{b}\ \in \ T^{+}t^{n_{y}}-  T^{+}t ^{n_{Y}}\ \ =\ \  \cup_{i=n_{y}}^{n_{Y}-1} T_{0}Z t^{i} \quad . $$ 
 Donc
  $$A_{y}:=\{ t_{b}u_{y}t_{b}^{-1} \quad | \quad t\in T^{+}t^{n'_{Y}}-  T^{+}t ^{n_{Y}}\}$$
  est  compact, et
  $$B_{y}:= \{ u_{b}\in U \quad | \quad u_{b}A_{y} \subset U_{0}\} $$ 
   est compact et contient $U_{0}$. Comme $U$ est ouvert dans $U$, il existe un ensemble fini $V$ dans $U$ tel que
  $$B_{y,z}\quad \subset \quad \cup_{i=n'_{Y}}^{n_{Y}-1} B_{0}V Z t^{i} \quad  \subset B_{0}ZC, $$ 
  o\`u $n'_{Y} = \inf_{y\in Y} n'_{y}$ et $C$ est  fini.  Comme l'action est lisse, $X':=C[1,X_{0} ] $ est fini. On en d\'eduit que 
  \begin{equation}\label{S+}
{S^{+}_{V}(V_{0})\quad  \subset \quad  R^{+}_{V}(V_{0}) } \ + \  < B_{0}Z X'>\ .  
\end{equation}
Par le lemme   \ref{0'},  on d\'eduit $\Delta_{V}(V_{0})\ \subset \  <B_{0}Z X'>$ est de type fini dans ${\cal M}_{o}(B_{0}Z)$.
   
\end{proof}

 \begin{proposition} \label{11}  Soit 
  $ f:V' \to V$ un morphisme dans   ${\cal M}_o(B)$,  soit $V'_{0} \in   {\cal M}_o(B_{0}Z)$ contenue dans $V'|_{B_{0}Z} $ de type fini et g\'en\'eratrice de $V'$, et soit  $\ind_{B_{0}Z}^{B}(V_{0})\to V$  une pr\'esentation  finie de $V$ avec $V_{0}\subset V|_{B_{0}Z}$. Alors $$f (<B^{+}V'_{0}>) \ \ , \ \ (f(V')\ \cap \  <B^{+}V_{0}>)$$ sont isomorphes  modulo $ {\cal M}_o^{tf}(B_{0}Z)$.
  
 \end{proposition}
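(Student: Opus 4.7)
The plan is to establish $f(<B^{+}V'_{0}>)\equiv f(V')\cap <B^{+}V_{0}>$ modulo ${\cal M}_o^{tf}(B_{0}Z)$ by a suitable reduction followed by an argument imitating the proof of Proposition \ref{1}(b).

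First, I reduce to the case $f(V'_{0})\subset V_{0}$. By Proposition \ref{fp}, the $o[B_{0}Z]$-submodule $V_{0}+f(V'_{0})\subset V|_{B_{0}Z}$, still of finite type, yields another finite presentation of $V$. By Proposition \ref{1}(a), the associated $<B^{+}(V_{0}+f(V'_{0}))>$ differs from $<B^{+}V_{0}>$ by a finite-type $o[B_{0}Z]$-submodule of $V$, and intersecting with $f(V')$ preserves this discrepancy modulo finite type; the left-hand side $f(<B^{+}V'_{0}>)=<B^{+}f(V'_{0})>$ is unaffected. Hence I may assume $f(V'_{0})\subset V_{0}$, in which case $<B^{+}f(V'_{0})>\subset f(V')\cap <B^{+}V_{0}>$ is immediate.

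For the reverse inclusion modulo ${\cal M}_o^{tf}(B_{0}Z)$, I pick $x\in f(V')\cap <B^{+}V_{0}>$ and build two lifts under the presentation $\pi:\ind_{B_{0}Z}^{B}(V_{0})\to V$. A first lift $\tilde x:=\sum_{i} b_{i}[1,v_{0,i}]\in <B^{+}[1,V_{0}]>$ comes from an expression $x=\sum_{i} b_{i}v_{0,i}$ with $b_{i}\in B^{+}$, $v_{0,i}\in V_{0}$. A second lift arises from $x=f(y)$ with $y=\sum_{j}c_{j}v'_{0,j}$ expressing $y$ in terms of generators of $V'$, namely $\tilde y:=\sum_{j}c_{j}[1,f(v'_{0,j})]\in <B[1,f(V'_{0})]>$. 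Split $\tilde y=\tilde y^{+}+\tilde y^{-}$ according to whether $c_{j}\in B^{+}$ or $c_{j}\in B-B^{+}$, with $\tilde y^{+}\in <B^{+}[1,f(V'_{0})]>\subset <B^{+}[1,V_{0}]>$ and $\tilde y^{-}\in <(B-B^{+})[1,V_{0}]>$. Since $\pi(\tilde x)=x=\pi(\tilde y)$, the difference $r_{0}:=\tilde x-\tilde y$ lies in $R_{V}(V_{0})$.

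Now I project the identity $\tilde x-\tilde y^{+}=\tilde y^{-}+r_{0}$ onto $<B^{+}[1,V_{0}]>$ along the decomposition $\ind_{B_{0}Z}^{B}(V_{0})=<B^{+}[1,V_{0}]>\oplus <(B-B^{+})[1,V_{0}]>$: the left-hand side lies entirely in $<B^{+}[1,V_{0}]>$ and $\tilde y^{-}$ projects to zero, so $\tilde x-\tilde y^{+}$ equals the $B^{+}$-component of $r_{0}$, which belongs to $S^{+}_{V}(V_{0})$ by definition. At this point the crucial ingredient is the inclusion \eqref{S+} derived in the proof of Proposition \ref{1}(b) from the finite generation of $R_{V}(V_{0})$: there is a finite set $X'\subset \ind_{B_{0}Z}^{B}(V_{0})$ with $S^{+}_{V}(V_{0})\subset R^{+}_{V}(V_{0})+<B_{0}ZX'>$. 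Writing $\tilde x-\tilde y^{+}=r+m$ with $r\in R^{+}_{V}(V_{0})$ and $m\in <B_{0}ZX'>$, and applying $\pi$, I obtain $x=\pi(\tilde y^{+})+\pi(m)$ with $\pi(\tilde y^{+})\in <B^{+}f(V'_{0})>$ and $\pi(m)$ in the finite-type $o[B_{0}Z]$-submodule $\pi(<B_{0}ZX'>)\subset V$. This yields $f(V')\cap <B^{+}V_{0}>\subset <B^{+}f(V'_{0})>+\pi(<B_{0}ZX'>)$, which is the required equivalence modulo finite type.

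The main obstacle is recognising that the identity $\tilde x-\tilde y^{+}=r_{0}^{+}\in S^{+}_{V}(V_{0})$ places us in the exact scope of the inequality \eqref{S+}; the preliminary reduction $f(V'_{0})\subset V_{0}$ is essential here, as it guarantees $\tilde y^{+}\in <B^{+}[1,V_{0}]>$ and hence that the projection argument can be carried out inside $\ind_{B_{0}Z}^{B}(V_{0})$.
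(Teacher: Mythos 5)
Your proposal is correct and shares the essential architecture of the paper's proof: reduce to $f(V'_0)\subset V_0$ by enlarging $V_0$ (the paper invokes Corollary~\ref{cor1} directly, you use Proposition~\ref{fp} plus Proposition~\ref{1}(a), which amounts to the same thing), observe the easy inclusion $f(\langle B^{+}V'_{0}\rangle)\subset f(V')\cap\langle B^{+}V_{0}\rangle$, and control the excess by a finite-type module whose finiteness traces back to the bound on $R_V(V_0)$. Where the two diverge is in the reverse inclusion: the paper is quite slick, writing $f(V')=\langle B^{+}f(V'_0)\rangle+\langle(B-B^{+})f(V'_0)\rangle$, applying the modular law (valid since $\langle B^{+}f(V'_0)\rangle\subset\langle B^{+}V_0\rangle$) to get
\[
f(V')\cap\langle B^{+}V_{0}\rangle=\langle B^{+}f(V'_{0})\rangle+\bigl(\langle(B-B^{+})f(V'_{0})\rangle\cap\langle B^{+}V_{0}\rangle\bigr),
\]
and observing the second term lies in $\Delta_{V}(V_{0})$, which is finite type by Proposition~\ref{1}(b). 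You instead unpack the finiteness by lifting a generic element to $\operatorname{ind}_{B_{0}Z}^{B}(V_{0})$, splitting and projecting onto the $B^{+}$-component, and re-invoking the relation~\eqref{S+} from the proof of Proposition~\ref{1}(b) — correct, but a longer route to the same finiteness statement the paper already has in hand. Two small points of care: the step ``intersecting with $f(V')$ preserves the discrepancy modulo finite type'' silently uses noetherianity of ${\cal M}_{o}(B_{0}Z)$ (Lemma~\ref{3}) to know a submodule of $\langle B^{+}\tilde V_{0}\rangle/\langle B^{+}V_{0}\rangle$ is again finite type; and your computation relies on $\tilde y^{+}\in\langle B^{+}[1,V_{0}]\rangle$, which does require the preliminary reduction $f(V'_0)\subset V_0$, as you note.
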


\begin{proof} 
  a) Il existe $V_{0}$ comme dans l\'enonc\'e contentant $f(V'_{0})$ par le corollaire \ref{cor 1}. 
  Par la proposition \ref{1} a), on peut se ramener \`a ce cas.
  Supposons donc  $f(V'_{0})\subset V_{0}$. Alors
  $$f (<B^{+}V'_{0}>)  \quad  \subset \quad  (f(V')\  \cap \ <B^{+}V_{0}>)  \ . $$
Le terme de droite est \'egal \`a
$$ <B^{+} f(V'_{0})> \ +\ (<(B-B^{+}) f(V'_{0})>  \ \cap \ <B^{+}V_{0}>) \ . $$
Il est contenu dans $  <B^{+} f(V'_{0})> +\ \Delta _{V}(V_{0})$. On a donc
  $$f (<B^{+}V'_{0}>) \ + \ \Delta _{V}(V_{0})\ = \ (f(V') \cap <B^{+}V_{0}>) \ +  \ \Delta _{V}(V_{0}) \ .$$

Par la proposition \ref{1} b), $\Delta _{V}(V_{0})  \in   {\cal M}_o^{tf}(B_{0}Z)$.
\end{proof}

\begin{corollary}  Soient 
  $ 0 \to V' \to V \to V'' \to 0$ une suite exacte dans   ${\cal M}_o(B)$  
  telle $V'$ soit de type fini et $V$ de pr\'esentation finie. 
  Soit  $\ind_{B_{0}Z}^{B}(V_{0})\to V$ une pr\'esentation  finie de $V$ avec $V_{0} \subset V|_{B_{0}Z}$  telle que dans la suite exacte induite dans  ${\cal M}_o(B_{0}Z)$ 
    $$ 0 \to V' _{0}\to V_{0} \to V_{0}'' \to 0$$ 
  $V'_{0}$ engendre $V'$.
  Alors la suite
$$ 0 \ \to \  <B^{+}V'_{0}>  \ \to \  <B^{+}V_{0}> \ \to \  <B^{+}V''_{0}>\  \to \  0$$
est exacte, modulo ${\cal M}_o^{tf}(B_{0}Z)$.
\end{corollary}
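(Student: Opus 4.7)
Notre plan est d'appliquer la proposition \ref{11} \`a l'inclusion $V' \hookrightarrow V$, apr\`es avoir observ\'e que seule l'exactitude au milieu de la suite est non triviale.

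Nous v\'erifions d'abord que la suite est un complexe avec injection \`a gauche et surjection \`a droite. L'injection $<B^{+}V'_{0}>\ \hookrightarrow\ <B^{+}V_{0}>$ provient de l'injectivit\'e de $V' \to V$. La surjectivit\'e de $<B^{+}V_{0}> \to <B^{+}V''_{0}>$ r\'esulte de ce que $V_{0}$ se surjecte sur $V''_{0}$ : l'ensemble $B^{+}V_{0}$ se surjecte alors sur $B^{+}V''_{0}$ dans $V''$, et les sous-$o$-modules qu'ils engendrent h\'eritent de cette surjectivit\'e. Enfin, la composition est nulle puisque $V' \to V''$ l'est. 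Ces points sont purement formels.

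Pour l'exactitude au milieu, nous identifions $V'$ \`a son image dans $V$ : le noyau de $<B^{+}V_{0}> \to <B^{+}V''_{0}>$ s'identifie alors \`a $V' \cap <B^{+}V_{0}>$. Nous appliquons ensuite la proposition \ref{11} \`a l'inclusion $f: V' \hookrightarrow V$, avec la donn\'ee $V'_{0}$ de l'\'enonc\'e (g\'en\'eratrice de $V'$ par hypoth\`ese) et la pr\'esentation finie $\ind_{B_{0}Z}^{B}(V_{0})\to V$. Pour justifier que $V'_{0}$ est de type fini dans ${\cal M}_{o}(B_{0}Z)$, nous utilisons que $V'_{0}\subset V_{0}$ et que $V_{0}$ est de type fini (puisqu'elle engendre $V$ via une pr\'esentation finie), dans une cat\'egorie noetherienne par le lemme \ref{3}. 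La proposition \ref{11} livre alors l'\'equivalence
$$<B^{+}V'_{0}>\ \equiv\ V'\cap <B^{+}V_{0}>$$
modulo ${\cal M}_{o}^{tf}(B_{0}Z)$, ce qui est pr\'ecis\'ement l'\'egalit\'e recherch\'ee, modulo ${\cal M}_{o}^{tf}(B_{0}Z)$, entre l'image de la fl\`eche de gauche et le noyau de la fl\`eche de droite. Comme $f$ est injective, il n'y a aucune ambiguit\'e dans cette identification.

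Il n'y a pas d'obstacle technique r\'eel dans ce corollaire : tout le travail de comparaison est concentr\'e dans la proposition \ref{11} (elle-m\^eme d\'eduite de la proposition \ref{1}), et il suffit ici de l'appliquer correctement. Le seul point \`a ne pas n\'egliger est la v\'erification des hypoth\`eses de la proposition \ref{11}, notamment la finitude du type de $V'_{0}$, qui repose sur la noeth\'erianit\'e de ${\cal M}_{o}(B_{0}Z)$ donn\'ee par le lemme \ref{3}.
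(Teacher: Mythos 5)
Votre preuve est correcte et suit essentiellement la m\^eme d\'emarche que celle du texte : r\'eduire l'exactitude au milieu \`a la comparaison de $<B^{+}V'_{0}>$ avec $V'\cap <B^{+}V_{0}>$, puis appliquer la proposition \ref{11} \`a l'inclusion $V'\hookrightarrow V$. Vous ajoutez utilement la v\'erification explicite que $V'_{0}$ est de type fini (via la noeth\'erianit\'e de ${\cal M}_{o}(B_{0}Z)$, lemme \ref{3}), point que le texte laisse implicite.
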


\begin{proof}  Par la preuve de la proposition \ref{suite}, $V_{0}$ existe bien. Notons $f$ l'application de $V'$ dans $V$. On voit facilement que la derni\`ere suite est exacte si et seulement si
 $$(<B^{+}V_{0}> \  \cap \  f(V')) \ \subset \ f( <B^{+}V'_{0}>). $$
 On applique la proposition \ref{11}.

\end{proof}

  \begin{lemma} \label{111} Soit $V\in {\cal M}_o(B)$  de presentation finie $\ind_{B_{0}Z} ^{B}(V_{0}) \to V$ avec  $V_{0}\subset V|_{B_{0}Z}$. Alors  l'application:
$$d\mapsto ((ut)^{-1}d)_{u\in U(q)} \ : \ \ D_{V}(V_{0}) \ \to \ \oplus_{u\in U(q)} D_{V}(V_{0})$$
est surjective, et son noyau appartient \`a $ {\cal M}_o^{tf}(B_{0}Z)$.
  \end{lemma}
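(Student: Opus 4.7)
The plan is to handle surjectivity and the control of the kernel of the map $\alpha : d\mapsto ((ut)^{-1}d)_{u\in U(q)}$ separately, the first by an explicit splitting construction, the second by reducing the analysis to the finite-type module $\Delta_V(V_0)$ provided by Proposition~\ref{1}(b). The key combinatorial input that I will invoke throughout is that for $u,u'\in U(q)$ one has $t^{-1}u'^{-1}ut = 1$ when $u = u'$, whereas for $u\ne u'$ the distinctness of the cosets modulo $tU_0 t^{-1}$ forces $t^{-1}u'^{-1}ut\in U\setminus U_0$, hence into $B-B^+$. Since $U$ is abelian, $(U\setminus U_0)\cdot U_0\subset U\setminus U_0$, so left-multiplying any element $u_0 t^n\in U_0 t^{\N}$ by such a factor still yields an element of $B-B^+$, which after acting on $V_0$ lands in $<(B-B^+)V_0>$.

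For surjectivity, I would lift each $d_u$ to a sum $\sum_i g_{u,i}v_{0,u,i}$ in $<B^+V_0>$ with $g_{u,i}\in B^+$ and $v_{0,u,i}\in V_0$, and set $d := \overline{\sum_{u,i}(ut)\,g_{u,i}\,v_{0,u,i}}$. Term by term, $(u't)^{-1}d$ carries the left factor $t^{-1}u'^{-1}ut$, so the diagonal terms $u=u'$ collapse to $d_{u'}$ while the off-diagonal terms vanish in $D_V(V_0)$ by the combinatorial input. For the kernel, take $d\in\ker\alpha$ with a lift $v\in\ <B^+V_0>$ (which exists since $V = <B^+V_0>+<(B-B^+)V_0>$). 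After absorbing $T_0 Z$-factors into the $V_0$-entries, I can write $v = \sum_i u_0^{(i)} t^{n_i} v_{0,i}$ with $u_0^{(i)}\in U_0$, $n_i\in\N$, $v_{0,i}\in V_0$. For each $i$ with $n_i\ge 1$, the canonical factorization $u_0^{(i)} = u^{*(i)}\cdot tw^{(i)}t^{-1}$, where $u^{*(i)}\in U(q)$ represents $u_0^{(i)}\bmod tU_0 t^{-1}$ and $w^{(i)}\in U_0$, gives $u_0^{(i)} t^{n_i} = u^{*(i)} t\cdot w^{(i)} t^{n_i-1}$; grouping by $u^{*(i)}$ yields
$$v \ = \ v_0 + \sum_{u\in U(q)}(ut)\, w_u,\qquad v_0\in V_0,\qquad w_u\in\ <B^+V_0>.$$
The same combinatorial argument applied to $(ut)^{-1}v$ then produces $(ut)^{-1}v\equiv w_u \pmod{<(B-B^+)V_0>}$, so $\alpha(d)=0$ forces $w_u\in\ <B^+V_0>\cap <(B-B^+)V_0>=\Delta_V(V_0)$ for each $u$.

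At this stage Proposition~\ref{1}(b) applies: $\Delta_V(V_0)$ is of finite type over $B_0 Z$; fix $o[B_0 Z]$-generators $x_1,\dots,x_r$. Then $d = \overline{v_0} + \sum_u \overline{(ut)\,w_u}$ lies in the $o[B_0 Z]$-submodule of $D_V(V_0)$ generated by a finite set of generators of $V_0$ together with the finitely many elements $\overline{(ut)\,x_j}$; this is a finite-type submodule, and by the noetherianity of ${\cal M}_o(B_0 Z)$ (Lemma~\ref{3}), $\ker\alpha$ is itself in ${\cal M}_o^{tf}(B_0 Z)$. The hard part will be the kernel bound: one must commit to the canonical decomposition $v = v_0 + \sum_u(ut)w_u$ before any computation, and identify the residues $w_u$ inside $\Delta_V(V_0)$. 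This is the only place where the finite-presentation hypothesis on $V$ is used, precisely through Proposition~\ref{1}(b); the surjectivity half is by contrast a straightforward verification.
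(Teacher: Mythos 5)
Your proof is correct and follows essentially the same strategy as the paper's: surjectivity by lifting each $d_u$ to $<B^{+}V_{0}>$, multiplying by $ut$, summing, and observing that the cross terms $t^{-1}u'^{-1}ut\cdot(\cdot)$ fall into $<(B-B^{+})V_{0}>$; and the kernel bound via the disjoint decomposition $B^{+}=B_{0}Z\cup\bigl(\cup_{u\in U(q)}utB^{+}\bigr)$, which reduces the kernel to the image of the finite-type module $\Delta_{V}(V_{0})$. The only presentational difference is that you work downstairs in $V$, writing $D_{V}(V_{0})=<B^{+}V_{0}>/\Delta_{V}(V_{0})$ and invoking Proposition~\ref{1}(b) as a black box, whereas the paper carries out the same computation upstairs in $\ind_{B_{0}Z}^{B}(V_{0})$ with $C=R_{V}(V_{0})+(B-B^{+})[1,V_{0}]$ and reuses the intermediate estimate $S_{V}^{+}(V_{0})\subset R_{V}^{+}(V_{0})+<B_{0}ZX'>$ from inside the proof of that proposition.
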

  
\begin{proof} Posons $C:= R_{V}(V_{0})+ (B-B^{+})[1,V_{0}]$ pour simplifier la notation. Alors  $D_{V}(V_{0})$ est le quotient de  $\ind_{B_{0}Z}(V_{0})$ par $C$ et le noyau de l'application du lemme est 
$${ \cap_{u\in U(q)} \ ut C\over C} \ .$$
Calculons $utC$. Par (\ref{proj}) 
$$
ut C \ = \ R_{V}(V_{0}) + (B-utB^{+})[1,V_{0}] \ = \ utS_{V}^{+}(V_{0}) \oplus (B-utB^{+})[1,V_{0}] \ . 
$$
 En d\'eveloppant le membre de droite  en utilisant  l'union disjointe $$B^{+}t =\cup_{u\in U(q)}utB^{+} \ , $$ on obtient 
 \begin{equation}\label{inter}ut C \ = \  (B-B^{+}t)[1,V_{0}]  \oplus  utS_{V}^{+}(V_{0})\oplus   _{u'\neq u , u'\in U(q)}u'tB^{+}[1,V_{0}] \ ,\end{equation}
 $$\cap_{u\in U(q)} \ ut C \ = \ (B-B^{+}t) [1,V_{0}] \oplus_{u\in U(q)} ut S_{V}^{+}(V_{0}) \ . $$
Comme $R_{V}(V_{0})\in {\cal M}_o(B)$ est de type fini, on la relation  (\ref{S+}). Il existe une partie finie $X'$ de $B[1,V_{0}]$ telle que
 $utS^{+}_{V}(V_{0})$  est contenu dans
$$ R^{+}_{V}(V_{0})  +   < B_{0}Z tX' >  \ . 
$$ 
Ceci ne d\'epend pas du choix de $u\in U(q)$ et 
$$ \cap_{u\in U(q)} \ ut C \quad  \subset \quad   (B-B^{+}t) [1,V_{0}] + R^{+}_{V}(V_{0})\   +\ < B_{0}Z tX' >  \ .$$
Comme $B^{+}- B^{+}t=B_{0}Z$, 
$$ \cap_{u\in U(q)} \ ut C \quad  \subset \quad C+ [1,V_{0} ] + < B_{0}Z tX' >  \quad \equiv \quad C \quad $$
modulo $ {\cal M}_o^{tf}(B_{0}Z)$.
Donc le noyau de l'application du lemme appartient \`a $ {\cal M}_o^{tf}(B_{0}Z)$. 

Montrons maintenant que   l'application est surjective. Soit $(f_{u})_{u\in U(q)}$ des \'el\'ements de $\ind_{B_{0}Z}(V_{0})$. On cherche un \'el\'ement $f\in \ind_{B_{0}Z}(V_{0})$ tel que $(ut)^{-1}f+ C= f_{u}+ C$ pour tout $u\in U(q)$.

Soit $\phi_{u}\in utB^{+}[1,V_{0}]$ la composante de $utf_{u}$, telle que 
$$utf_{u}\ + \  (B-ut B^{+})[1,V_{0}] \ = \ \phi_{u} \ \oplus (B-ut B^{+})[1,V_{0}] \  . $$
Par (\ref{inter}) on a 
$$utf_{u}+utC \ = \ (\phi_{u} +  utS^{+}(V_{0}))   \ \oplus \ (B-ut B^{+})[1,V_{0}] \  , $$
\end{proof}
 $$\cap_{u\in U(q)} \ (utf_{u}+ut C) \ = \ (B-B^{+}t) [1,V_{0}] \oplus_{u\in U(q)}( \phi_{u} + ut S^{+}(V_{0})) \ .$$
L'intersection n'est pas vide. Soit $f$ dans l'intersection, par exemple   $f=\sum_{u\in U(q)}\phi_{u}$. Pour tout $u\in U(q)$, on a 
$f+utC = utf_{u}+utC$, donc $(ut)^{-1}f+ C= f_{u}+ C$. Donc l'application est surjective.

   \section{$(\varphi,\Gamma)$-modules}
  
Notations comme dans le chapitre pr\'ec\'edent. On suppose de plus que  $o$ est l'anneau des entiers d'une extension finie $L$ de $\bf Q_{p}$ d'uniformisante $p_{L}$ et de corps r\'esiduel $k$.  
 On note  
$$\Lambda_{o}(U_{0})\quad :=\quad o[[U_{0}]]\quad ,  $$
 la $o$-alg\`ebre de groupe compl\'et\'ee  de $U_{0} \simeq O_{F}$, et 
 ${\cal M}s(\Lambda_{o}(U_{0}))$   la cat\'egorie  des $\Lambda_{o}(U_{0})$-modules topologiques profinis.
 La dualit\'e de Pontryagin 
  $$ M \mapsto M^{*}:=\Hom_{o}^{cont}(M,L/o) $$ est une \'equivalence de cat\'egories entre  ${\cal M}_{o-tor}(U_{0}) $ et   ${\cal M}(\Lambda_{o}(U_{0}))$
  telle que  ${M^{*}}^{*} =M$. 
  La dualit\'e de Pontryagin est un outil extr\^emement utile. Nous en voyons tout de suite des exemples.

 \begin{proposition} \label{noeth} La cat\'egorie ${\cal M}^{tf}(\Lambda_{o}(U_{0}))$  des $\Lambda_{o}(U_{0})$-modules topologiques de type fini est \'equivalente par la dualit\'e de Pontryagin \`a 
la sous-cat\'egorie des  $M\in {\cal M}_{o-tor}(U_{0})$ tels que 
 $$M^{U_{0}, p_{L}=0} \quad {\it est \  fini } \  .$$
\end{proposition}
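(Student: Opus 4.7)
\bigskip\noindent\textbf{Proof plan.}
The plan is to use the fact that Pontryagin duality $M\mapsto M^{*}$ already gives a contravariant equivalence between ${\cal M}_{o-tor}(U_{0})$ and the category ${\cal M}(\Lambda_{o}(U_{0}))$ of profinite $\Lambda_{o}(U_{0})$-modules, and then to identify, under this duality, the subcategory of topologically finitely generated modules on one side with the subcategory $\{M \mid M^{U_{0},p_{L}=0} \text{ fini}\}$ on the other. The work is therefore purely in this identification.

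The main tool on the algebra side is the topological Nakayama lemma applied to $\Lambda_{o}(U_{0})=o[[U_{0}]]$. This is a compact (pro-finite) local ring whose unique maximal ideal is $\mathfrak{m}=(p_{L},\mathfrak{a})$, where $\mathfrak{a}$ denotes the augmentation ideal of the compact $o$-algebra of $U_{0}$, and whose residue field is $k$. (When $F/\bf Q_{p}$ is finite, $\Lambda_{o}(U_{0})\simeq o[[T_{1},\ldots,T_{d}]]$ with $d=[F:\bf Q_{p}]$, so this is a classical Noetherian complete local ring; in the equicharacteristic case one uses the fact that $\Lambda_{o}(U_{0})$ is still a compact local ring whose topology is generated by the powers of $\mathfrak{m}$.) Topological Nakayama then asserts: a profinite $\Lambda_{o}(U_{0})$-module $N$ is topologically of finite type if and only if $N/\mathfrak{m}N$ is finite.

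The second step is to translate $N/\mathfrak{m}N$ through Pontryagin duality. For $N=M^{*}$, duality interchanges closed submodules and discrete quotients and, in the group-theoretic direction, interchanges $U_{0}$-invariants and $U_{0}$-coinvariants; explicitly $(N/\mathfrak{a}N)^{*}=(M^{*})^{*,\,U_{0}}=M^{U_{0}}$, so taking further the quotient by $p_{L}N$ and dualising gives
$$(N/\mathfrak{m}N)^{*}\ =\ (N/(p_{L}N+\mathfrak{a}N))^{*}\ =\ (M^{U_{0}})[p_{L}]\ =\ M^{U_{0},\,p_{L}=0}.$$
Since $N/\mathfrak{m}N$ is a discrete $k$-module, it is finite if and only if its Pontryagin dual is finite, i.e. if and only if $M^{U_{0},p_{L}=0}$ is finite. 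Combining with the topological Nakayama step, this proves the announced equivalence on objects; morphisms correspond under Pontryagin duality automatically.

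The only genuine obstacle is justifying the topological Nakayama lemma for $\Lambda_{o}(U_{0})$ in the required generality, together with the identification of its maximal ideal and the stability of the $\mathfrak{m}$-adic topology; the rest is a clean book-keeping exercise in Pontryagin duality. Once Nakayama is in place, the commutation of duality with the operations "quotient by $p_{L}$", "kill the augmentation ideal", and "take invariants vs coinvariants" closes the argument.
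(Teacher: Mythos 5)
Your argument follows exactly the paper's route: the paper's proof is a one-line invocation of the topological Nakayama lemma together with the dual identification $(M^*/\mathfrak{m}M^*)^* = M^{U_0,\,p_L=0}$, which is precisely what you spell out. You merely make explicit the maximal ideal $\mathfrak{m}=(p_L,\mathfrak{a})$ and the duality between invariants and coinvariants, which is correct and fills in the details the paper leaves implicit.
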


\begin{proof} 
Par le lemme de Nakayama topologique, $M^{*}$ est un $\Lambda_{o}(U_{0})$-module de type fini si et seulement sa r\'eduction modulo l'id\'eal maximal de $\Lambda_{o}(U_{0})$ est   finie si et seulement si  $M^{U_{0}, p_{L}=0}$ est fini. \end{proof}

\begin{proposition} \label{mtf} Soit $V\in {\cal M}_{o-tor}(B)$ une repr\'esentation $Z$-localement finie et   de pr\'esentation $\ind_{B_{0}Z}^{B}(V_{0})\to V$ finie avec $V_{0}\subset V|_{B_{0}Z}$. Si $V$ est   engendr\'e  par  un sous-module $B^{+}$-stable $M$ tel que $M^{U_{0}, p_{L}=0}$ est fini, 
 alors  $D_{V}(V_{0})^{*}$ 
 est un $\Lambda(U_{0})$-module de type fini.

\end{proposition}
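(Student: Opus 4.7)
The plan is to apply the topological Nakayama lemma (Proposition~\ref{noeth}): showing $D_V(V_0)^*$ is a finitely generated $\Lambda_o(U_0)$-module reduces to showing that the $o$-module $N^{U_0, p_L=0}$ is finite, where $N := D_V(V_0)$. Denote by $\pi \colon V \twoheadrightarrow N$ the canonical projection identifying $N$ with $V/\langle (B-B^+)V_0\rangle$; by Lemma~\ref{5}, $\ker \pi$ is $(B^+)^{-1}$-stable, so $\pi$ is $(B^+)^{-1}$-equivariant.

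I would first reformulate the generation hypothesis on the $N$-side. From $U = \bigcup_{n \geq 0} t^{-n} U_0 t^n$, $B = UT$, and $T \subset B^+\cdot Z$, one obtains $B = \bigcup_{n \geq 0} t^{-n} B^+$; combined with the $B^+$-stability of $M$ and the generation $V = o[B]M$, this yields $V = \bigcup_{n \geq 0} t^{-n} M$ as an ascending union (since $tM \subset M$ forces $M \subset t^{-1}M$). Projecting gives $N = \bigcup_{n \geq 0} t^{-n} M'$ with $M' := \pi(M) \subset N$ a $B_0 Z$-submodule, and $M'^{U_0, p_L=0}$ is finite, being a quotient of $M^{U_0, p_L=0}$. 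By Proposition~\ref{noeth} applied to $M$, the module $M^*$ is finitely generated over $\Lambda_o(U_0)$; dualizing the surjection $M \twoheadrightarrow M'$ embeds $M'^* \hookrightarrow M^*$, so by noetherianity of $\Lambda_o(U_0)$ (since $o$ is noetherian and $U_0 \simeq O_F$ is a compact $p$-adic analytic group), $M'^*$ is itself finitely generated.

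The final step is to propagate this finiteness from $M'^*$ to $N^*$ by exploiting the étale structure on $N$ (Lemma~\ref{111}, part (c) of the theorem). Dually, $N^*$ is an étale $\varphi$-module over $\Lambda_o(U_0)$ with $\varphi$ adjoint to $t^{-1}$, and the étale condition reads $N^* = \bigoplus_{u \in U(q)} u\,\varphi(N^*)$. Modulo the maximal ideal $\mathfrak{m}$ of $\Lambda_o(U_0)$, each $u \in U_0$ reduces to $1$, collapsing the decomposition to $\varphi(N^*) + \mathfrak{m}N^* = N^*$. Combined with the surjection $N^* \twoheadrightarrow M'^*$ (dual to $M' \hookrightarrow N$) and with the generation $N = \bigcup_n t^{-n} M'$ (whose dual asserts that $f \in N^*$ is determined by the sequence of restrictions $\varphi^n(f)|_{M'} \in M'^*$), one deduces that $N^*/\mathfrak{m}N^*$ is controlled by $M'^*/\mathfrak{m}M'^*$, hence finite. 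Topological Nakayama then gives $N^*$ finitely generated, as desired.

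The principal obstacle is the last step: converting the combinatorial generation $N = \bigcup_n t^{-n} M'$ together with the étale structure into the actual finiteness of $N^*/\mathfrak{m}N^*$. The delicate point is that the étale decomposition is symmetric in $U(q)$ while the submodule $M' \subset N$ is not. A direct approach observes that any $d \in N^{U_0}$ satisfies $(ut)^{-1}d = t^{-1}d$ for all $u \in U(q)$ (since $d$ is $U_0$-fixed), so injectivity of the étale map forces $t^{-1}$ to restrict to an injection $N^{U_0, p_L=0} \hookrightarrow N^{U_0, p_L=0}$; one must then combine this with the filtration $(t^{-n}M')_n$ and the contraction $\varphi(\mathfrak{m}) \subset \mathfrak{m}$, likely together with the finite-presentation hypothesis on $V$ (which should prevent the $\varphi$-depth from escaping to infinity), to close the argument.
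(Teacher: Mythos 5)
Your overall strategy — pass to $N := D_V(V_0)$, locate the image $M'$ of $M$ in $N$, and try to propagate finiteness from $M'^*$ to $N^*$ using the étale structure — is genuinely harder than what the proposition requires, and you honestly flag at the end that you have not closed the final gap. That gap is real: knowing $N = \bigcup_n t^{-n}M'$ with $M'^*$ finitely generated, together with étaleness of $\varphi$ on $N^*$, does not obviously bound $N^*/\mathfrak m N^*$. Your observation $\varphi(N^*) + \mathfrak m N^* = N^*$ reduces the generation problem to $\varphi(N^*)$, but $\varphi$ is not surjective and you have no control on how many iterates of $\varphi$ are needed; the intended contraction argument never closes. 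A smaller slip earlier: ``$M'^{U_0,p_L=0}$ is finite, being a quotient of $M^{U_0,p_L=0}$'' is a non sequitur, since $(-)^{U_0}$ is only left-exact; your next sentence (dualize $M \twoheadrightarrow M'$ to an injection $M'^* \hookrightarrow M^*$ and use noetherianity of $\Lambda_o(U_0)$) is the correct justification, so the conclusion $M'^*$ finitely generated does stand.

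The paper avoids the propagation problem entirely by going the opposite direction. Rather than pushing $M$ \emph{forward} to a quotient $M'$ of $N$, it pulls a new generating set \emph{into} $M$: since $V_0$ is a finite $o$-module and $V = \bigcup_n t^{-n}M$, one has $t^nV_0 \subset M$ for some $n$, and the $B_0Z$-module $V'_0$ generated by $t^nV_0$ is finite, generates $V$, and lies inside $M$. Because $M$ is $B^+$-stable, $\langle B^+V'_0\rangle$ is then a \emph{sub}module of $M$, so $\langle B^+V'_0\rangle^{U_0,p_L=0}$ injects into $M^{U_0,p_L=0}$ and is finite — here left-exactness of invariants works in one's favour rather than against. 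Proposition~\ref{1}~a) (independence of $\langle B^+V_0\rangle$ from the choice of finite generating set, modulo ${\cal M}_o^{tf}(B_0Z)$) then transfers this finiteness to $\langle B^+V_0\rangle^{U_0,p_L=0}$, hence to $D_V(V_0)^{U_0,p_L=0}$ since $\Delta_V(V_0)$ is finite; Proposition~\ref{noeth} finishes. The étale structure and the filtration $t^{-n}M'$ never need to appear. If you want to salvage your version, the cleanest fix is to abandon the push-forward to $M'$ at the outset and instead change $V_0$ to a generating set contained in $M$, exactly as above.
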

\begin{proof} Il existe $n \in \bf N$ tel que $t^{n}V_{0}\subset M$. La repr\'esentation $V'_{0} \in   {\cal M}_{o-tor}(B_{0}Z)$ engendr\'ee par $t^{n}V_{0}$ dans $V|_{B_{0}Z}$ est  finie, contenue dans $M$, et engendre $V$. L'espace   $<B^{+}V'_{0}>^{U_{0}, p_{L}=0}$ est fini car contenu dans $M^{U_{0}, p_{L}=0}$ qui est fini par hypoth\`ese.  Par la proposition \ref{1},  les espaces $<B^{+}V'_{0}>^{U_{0}, p_{L}=0}$ et  $D_{V}(V_{0})^{U_{0}, p_{L}=0}$ sont simultan\'ement finis ou infinis. Par la proposition \ref{noeth}, $D_{V}(V_{0})^{U_{0}, p_{L}=0}$ est fini si et seulement si  $D_{V}(V_{0})^{*}$ 
 est un $\Lambda(U_{0})$-module de type fini.
 \end{proof}
 
L'anneau de Fontaine ${\cal O}_{\cal E}$  est la compl\'etion $p$-adique du localis\'e de 
$$ \Lambda_{o}\ :=\  \Lambda_{o} (U({\bf Z_{p}})) \ .$$ par rapport au compl\'ementaire de $p_{L}\Lambda _{o}$, et  $\Lambda_{o}$ se plonge naturellement dans  ${\cal O}_{\cal E}$. 

Supposons que  $F$ est une extension finie de $\bf Q_{p}$. Alors la trace de $O_{F}  $ \`a $\bf Z_{p}$  induit un morphisme de $o$-alg\`ebres locales
$$\Lambda_{o}(U_{0} ) \ \to \ \Lambda_{o} \ \to \ {\cal O}_{\cal E} \ . $$
Le produit tensoriel ${\cal O}_{\cal E} \otimes_{\Lambda_{o}(U_{0} )} - $ va tuer le dual de Pontryagin d'une repr\'esentation $Z$-localement finie dans  ${\cal M}_{o-tor}^{tf}(B_{0}Z)$, car:

 \begin{lemma} Si $M$ est un $\Lambda_{o}(U_{0} )$-module fini, alors  ${\cal O}_{\cal E} \otimes_{\Lambda_{o}(U_{0} )}M $ est nul.
\end{lemma}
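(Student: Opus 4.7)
The plan is to find an element $a \in \Lambda_o(U_0)$ that annihilates $M$ and whose image $\bar a$ in ${\cal O}_{\cal E}$ is a unit. Granted such $a$, for every $m \in M$ and $x \in {\cal O}_{\cal E}$, the identity $(am) \otimes x = m \otimes \bar a\,x$ in $M \otimes_{\Lambda_o(U_0)} {\cal O}_{\cal E}$ gives $m \otimes \bar a\,x = 0$; applying this with $x = \bar a^{-1}$ yields $m \otimes 1 = 0$, whence $M \otimes_{\Lambda_o(U_0)} {\cal O}_{\cal E} = 0$.

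For the construction of $a$: since $M$ is finite, the action map $\Lambda_o(U_0) \to \End_o(M)$ is continuous with finite target, hence has open kernel, whose intersection with $U_0$ is thus an open subgroup $U_0' \subset U_0$; in particular, $[u] - 1$ annihilates $M$ for every $u \in U_0'$, where $[u]$ denotes the group element of $U_0 \subset \Lambda_o(U_0)$. Identifying $U_0 = O_F$ additively, I next choose $v \in O_F$ with $\tr(v) \neq 0$, which is possible because separability of $F/{\bf Q_{p}}$ ensures the trace form is non-degenerate. Write $\tr(v) = p^k w$ with $k \geq 0$ and $w \in {\bf Z_{p}}^\times$, and pick $n$ so large that $p^n v \in U_0'$. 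Then $a := [p^n v] - 1 \in \Lambda_o(U_0)$ satisfies $aM = 0$ by construction.

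It remains to verify that $\bar a$ is a unit in ${\cal O}_{\cal E}$. Writing $\Lambda_o = o[[T]]$ with $T = \gamma - 1$ for $\gamma$ a topological generator of $U({\bf Z_{p}})$, the homomorphism $\Lambda_o(U_0) \to \Lambda_o$ induced by the trace sends $[p^n v]$ to $(1+T)^{p^{n+k} w}$, hence
$$\bar a \ = \ (1+T)^{p^{n+k} w} - 1.$$
Using $(1+T)^{p^N} \equiv 1 + T^{p^N} \pmod{p_L}$ for all $N \geq 0$ (Frobenius), reducing modulo $p_L$ gives
$$\bar a \ \equiv \ (1 + T^{p^{n+k}})^w - 1 \ \equiv \ w\,T^{p^{n+k}} + \binom{w}{2} T^{2 p^{n+k}} + \cdots \pmod{p_L},$$
which is $T^{p^{n+k}}$ times a unit in $k[[T]]$ (since $w \in k^\times$), and in particular a non-zero element of $k[[T]]$. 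Thus $\bar a$ lies in $\Lambda_o \setminus p_L \Lambda_o$, so is inverted in the localization and remains a unit in its $p$-adic completion ${\cal O}_{\cal E}$. The only delicate point is the appearance of the factor $p^k$ in $\tr(v)$, which can be strictly positive for wildly ramified extensions (for instance $F = {\bf Q_{p}}(p^{1/p})$, where $\tr(O_F) = p{\bf Z_{p}}$); but as only $\tr(v) \neq 0$ is required, separability of $F/{\bf Q_{p}}$ is enough.
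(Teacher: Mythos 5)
Your proof is correct, and it takes a somewhat more hands-on route than the paper's. The paper's proof factors the argument into two steps: first it base-changes $M$ along the trace to get $N := o[[U({\bf Z_{p}})]]\otimes_{\Lambda_o(U_0)}M$, observes $N$ is again finite, and then tensors with ${\cal O}_{\cal E}$ over $o[[{\bf Z_{p}}]]$ — leaving implicit that a finite $o[[T]]$-module is killed by a power of $T$, which becomes a unit in ${\cal O}_{\cal E}$. You instead short-circuit the two steps by directly exhibiting a single annihilator $a=[p^n v]-1\in \Lambda_o(U_0)$ of $M$ whose image in $\Lambda_o$ lies outside $p_L\Lambda_o$, hence is invertible in ${\cal O}_{\cal E}$. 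Both proofs ultimately hinge on the same two facts: (i) a finite module is killed by a small open subgroup, and (ii) the trace $O_F\to{\bf Z_{p}}$ is nonzero, so that the resulting element of $\Lambda_o$ is not divisible by $p_L$. Your version has the merit of making (ii) and its robustness under ramification completely explicit (the $T^{p^{n+k}}$ computation mod $p_L$), which the paper's one-line proof does not spell out; the paper's version is shorter because it isolates the reduction to the one-variable ring $o[[{\bf Z_{p}}]]$ as a clean intermediate step. Your discussion of the wildly ramified case is accurate and a useful sanity check.
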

 \begin{proof} Le $o[[U({\bf Z_{p}})]]$-module $N:=o[[U({\bf Z_{p}})]]  \otimes_{\Lambda_{o}(U_{0} )}M$ est fini  et 
 $${\cal O}_{\cal E} \otimes_{\Lambda_{o}(U_{0} )}M ={\cal O}_{\cal E} \otimes_{o[[\bf Z_{p}]]}N =0 \ . $$
  \end{proof}

L'action de $T^{+}({\bf Q_{p}})$ sur $U({\bf Z_{p}})$ induit une action de $T^{+}({\bf Q_{p}})$  sur  $\Lambda $ et sur ${\cal O}_{\cal E} $. On note 
$T^{*}$ le sous-monoide de $T^{+}$ form\'e par les \'el\'ements de coefficient $1$ en bas \`a droite.

\begin{definition} Un $(\varphi, \Gamma)$-module  sur ${\cal O}_{\cal E}$ est un   ${\cal O}_{\cal E}$-module ${\cal D}$ muni d'une action semi-lin\'eaire de $T^{*}({\bf Q_{p}})$.  
Les actions de 
$$\pmatrix{p & 0 \cr 0 & 1} \ \ , \ \ \pmatrix{{\bf Z_{p} }^{*}& 0 \cr 0 & 1} \ $$
qui engendrent le monoide $T^{*}({\bf Q_{p}})$,  sont not\'ees $\varphi$ et $\Gamma$.
Un  $(\varphi, \Gamma)$-module est dit \'etale si $\varphi$ est \'etale.
\end{definition}

Soit  $V\in {\cal M}_{o-tor}(G)$ de pr\'esentation  $\ind_{B_{0}Z}^{B}(V_{0}) \to V$ avec $V_{0}\subset V|_{B_{0}Z}$. 
 Le  dual de Pontryagin  $D_{V}(V_{0})^{*}$ de  $D_{V}(V_{0})$ est un $\Lambda (U_{0})$-module muni d'une action semi-lin\'eaire continue de $T^{+}=T^{+}(F)$.
Par restriction \`a   $T^{*}({\bf Q_{p}})$,  les modules
$$\Lambda  \otimes_{\Lambda(U_{0} )}D_{V}(V_{0})^{*} \ \ , \ \  {\cal O}_{\cal E} \otimes_{\Lambda(U_{0} )}D_{V}(V_{0})^{*}$$
sont des $(\varphi, \Gamma)$-modules sur $\Lambda, \    {\cal O}_{\cal E} $.
 
 Nous avons d\'emontr\'e le th\'eor\`eme suivant:

\begin{theorem} \label{th3} On suppose que $F$ est une extension finie de $\bf Q_{p}$. Soit $V\in {\cal M}_{o-tor}(B)$ une repr\'esentation $Z$-localement finie et de  pr\'esentation finie $\ind_{B_{0}Z}^{B}(V_{0})\to V$ avec $V_{0}\subset  V|_{B_{0}Z}$.  Alors, le $(\varphi, \Gamma)$-module sur ${\cal O}_{\cal E} $
$${\cal D}(V):=  {\cal O}_{\cal E} \otimes_{\Lambda(U_{0} )}D_{V}(V_{0})^{*}$$
est \'etale et ne d\'epend pas du choix de la pr\'esentation finie. C'est  un ${\cal O}_{\cal E} $-module de type fini si $V$ est engendr\'e  par  un sous-module $B^{+}$-stable $M$ tel que $M^{U_{0}, p_{L}=0}$ est fini.
\end{theorem}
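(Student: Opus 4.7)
The plan is simply to assemble results already proved: the preceding technical theorem on $D_{V}(V_{0})$ (in particular Lemma \ref{111} for étaleness and Proposition \ref{1}(a) for independence of $V_{0}$), Proposition \ref{mtf} for finite generation of the dual, and the lemma immediately preceding the statement which says that ${\cal O}_{\cal E}\otimes_{\Lambda_{o}(U_{0})}-$ annihilates finite $\Lambda_{o}(U_{0})$-modules. The crux is to observe that, under the $Z$-local finiteness hypothesis, every object of ${\cal M}_{o}^{tf}(B_{0}Z)$ that arises as an ``error'' in the preceding propositions is a genuinely \emph{finite} $o$-module (by Lemma \ref{le3}(2)), and hence its Pontryagin dual is a \emph{finite} $\Lambda_{o}(U_{0})$-module, killed upon base change to ${\cal O}_{\cal E}$.

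First I would prove étaleness. By Lemma \ref{111} there is an exact sequence
$$0 \to N \to D_{V}(V_{0}) \to \oplus_{u\in U(q)} D_{V}(V_{0}) \to 0$$
with $N\in{\cal M}_{o}^{tf}(B_{0}Z)$ and $V$, hence $N$, being $Z$-locally finite. Lemma \ref{le3}(2) forces $N$ to be a finite $o$-module, so $N^{*}$ is a finite $\Lambda_{o}(U_{0})$-module; applying ${\cal O}_{\cal E}\otimes_{\Lambda_{o}(U_{0})}-$ to the Pontryagin dual sequence and invoking the annihilation lemma yields an isomorphism $\oplus_{u\in U(q)}{\cal D}(V) \xrightarrow{\sim} {\cal D}(V)$, which is precisely étaleness of $\varphi$ on the $(\varphi,\Gamma)$-module ${\cal D}(V)$ (the $\Gamma$-action and the semi-linearity are transported automatically by restriction from $T^{+}(F)$ to $T^{*}({\bf Q}_{p})$ via the trace map $O_{F}\to{\bf Z}_{p}$ used to define ${\cal O}_{\cal E}$).

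Next I would handle independence of the presentation. For two finite presentations with $V_{0}$ and $V_{0}'$, the corollary refining two presentations produces a common presentation dominating both, and Proposition \ref{1}(a) then identifies $\langle B^{+}V_{0}\rangle$ and $\langle B^{+}V_{0}'\rangle$ modulo ${\cal M}_{o}^{tf}(B_{0}Z)$. The same reduction ($Z$-local finiteness together with Lemma \ref{le3}(2) and the annihilation lemma) shows that the two objects ${\cal O}_{\cal E}\otimes_{\Lambda_{o}(U_{0})}D_{V}(V_{0})^{*}$ and ${\cal O}_{\cal E}\otimes_{\Lambda_{o}(U_{0})}D_{V}(V_{0}')^{*}$ are canonically isomorphic. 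The finite-type assertion is then immediate: Proposition \ref{mtf} delivers that $D_{V}(V_{0})^{*}$ is of finite type over $\Lambda(U_{0})$, and finite type is preserved by base change along $\Lambda(U_{0})\to{\cal O}_{\cal E}$.

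The only non-formal point is the passage from ``errors in ${\cal M}_{o}^{tf}(B_{0}Z)$'' to ``genuine zeroes after tensoring with ${\cal O}_{\cal E}$''; this rests entirely on the $Z$-local finiteness of $V$ via Lemma \ref{le3}(2), after which the annihilation lemma finishes the job. I do not expect any serious obstacle beyond careful bookkeeping of the compatibility of Pontryagin duality with the direct sum indexed by $U(q)$ and with the base change $\Lambda_{o}(U_{0})\to{\cal O}_{\cal E}$.
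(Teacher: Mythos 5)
Your assembly matches the paper's own: the theorem is introduced with ``Nous avons d\'emontr\'e le th\'eor\`eme suivant,'' i.e.\ as a summary of the preceding work, and the ingredients you cite (Lemma~\ref{111} together with the technical theorem, Proposition~\ref{1}, Proposition~\ref{mtf}, Lemma~\ref{le3}(2) to promote ``type fini over $o[B_0Z]$'' to ``finite $o$-module'' under the $Z$-local finiteness hypothesis, and the annihilation lemma for ${\cal O}_{\cal E}\otimes_{\Lambda_o(U_0)}-$) are exactly the ones the paper assembles.

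Two steps you state a little too quickly --- and which the paper itself also leaves implicit --- deserve a word. First, turning the exact sequence of Lemma~\ref{111} into an \emph{iso}morphism after applying $-\otimes_{\Lambda_o(U_0)}{\cal O}_{\cal E}$ needs not only ${\cal O}_{\cal E}\otimes_{\Lambda_o(U_0)}N^*=0$ (right exactness plus the annihilation lemma) but also $\Tor_1^{\Lambda_o(U_0)}(N^*,{\cal O}_{\cal E})=0$ for the injectivity; this does hold, since by flatness of ${\cal O}_{\cal E}$ over $\Lambda_o(U({\bf Z}_p))$ it reduces to observing that $\Tor_1^{\Lambda_o(U_0)}(N^*,\Lambda_o(U({\bf Z}_p)))$ is again a \emph{finite} module (compute it from a finite free resolution) and is then killed by the same annihilation lemma --- but it is a genuine extra step, not a formal consequence of the $\Tor_0$ statement. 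Second, the map you show to be an isomorphism is the linearization of $t=\pmatrix{p_F & 0 \cr 0 & 1}$, a sum indexed by $U({\bf F}_q)$, while \'etaleness of the $(\varphi,\Gamma)$-module is the bijectivity of the linearization of $\varphi=\pmatrix{p & 0 \cr 0 & 1}$, a sum indexed by $U({\bf F}_p)$; when $F\neq{\bf Q}_p$ one has $\varphi\in T_0\, t^{\,e(F/{\bf Q}_p)}$, and the translation from $t$-\'etaleness over $\Lambda_o(U_0)$ to $\varphi$-\'etaleness over ${\cal O}_{\cal E}$, through the trace $O_F\to{\bf Z}_p$, is precisely where the bookkeeping lives. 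Your sentence ``which is precisely \'etaleness of $\varphi$'' elides this conversion.
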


\section{Repr\'esentations de $G:=GL(2,F)$}

Nous nous restreignons  maintenant  aux repr\'esentations lisses de $B$ qui se prolongent \`a $G$. 
C'est une information profonde qui permet d'aller plus loin. Nous allons g\'en\'eraliser les r\'esultats obtenus par  Hu \cite{Hu} pour des repr\'esentations irr\'eductibles lisses de $G$ sur $k$ \`a des repr\'esentations lisses de torsion de $G$ sur $o$.
Il n'y a pas d'hypoth\`ese sur la caract\'eristique de $F$, mais lorsque $F$ est de caract\'eristique $0$ 
 nous en d\'eduirons  des repr\'esentations satisfaisant la condition
du th\'eor\`eme \ref{th3}.  Les techniques d\'evelopp\'ees pour les repr\'esentations de $B$ dans les chapitres pr\'ec\'edents joints \`a celles de Hu permettent d'obtenir des r\'esultats nouveaux sur les repr\'esentations lisses de $G$  sur $o$ (non necessairement de torsion).

On note  $K:=GL(2,O_{F} ) $.

  Soit $V\in {\cal M}_{o}(G)$ et soit  $V_{0}\in {\cal M}_{o}(KZ)$.  Si $V$ est un quotient de  $\ind_{KZ}^{G}(V_{0})$, on dit que 
  $$ \ind_{KZ}^{G}(V_{0}) \to V $$
  est une pr\'esentation de $V$. On dit que la pr\'esentation est finie si $V_{0}$ est de type fini et si 
    le noyau  $R_{V}(V_{0}) \in  {\cal M}_{o}(G)$  de  la pr\'esentation est  de type fini.

  \begin{proposition}    Soit $V\in {\cal M}_{o}(G)$. Alors $V$ est de pr\'esentation finie si et seulement si $V|_{B}$ est de pr\'esentation finie.
  \end{proposition}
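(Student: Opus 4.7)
The plan is to exploit two ingredients. The first is the Iwasawa decomposition $G=KB$, which yields $G=B\cdot KZ$ with $B\cap KZ = B_{0}Z$, whence the Mackey isomorphism of $B$-modules
\[
\ind_{KZ}^{G}(V_{0})|_{B} \ \simeq \ \ind_{B_{0}Z}^{B}(V_{0}|_{B_{0}Z})
\]
for every $V_{0}\in {\cal M}_{o}(KZ)$. The second is the following soft observation: if $W$ is une repr\'esentation lisse de $KZ$ (resp.\ de $G$) de type fini comme $o[KZ]$-module (resp.\ comme $o[G]$-module), alors $W|_{B_{0}Z}\in {\cal M}_{o}^{tf}(B_{0}Z)$ (resp.\ $W|_{B}\in {\cal M}_{o}^{tf}(B)$). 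En effet, si $W=o[KZ]\{w_{1},\dots,w_{n}\}$, la lissit\'e jointe \`a la compacit\'e de $K$ force chaque orbite $Kw_{i}$ \`a \^etre finie, donc $M:=\sum_{i}\, o[K]w_{i}$ est un $o$-module de type fini  $K$-stable; puisque $Z$ est central, $W=o[Z]M=o[B_{0}Z]M$. La version pour $G$ est identique, en utilisant $G=BK$.

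Pour l'implication directe, on part d'une pr\'esentation finie $0\to R\to \ind_{KZ}^{G}(V_{0})\to V\to 0$ dans ${\cal M}_{o}^{tf}(G)$, on restreint \`a $B$ et applique l'isomorphisme de Mackey pour obtenir la pr\'esentation $0\to R|_{B}\to \ind_{B_{0}Z}^{B}(V_{0}|_{B_{0}Z})\to V|_{B}\to 0$. L'observation appliqu\'ee aux repr\'esentations lisses $V_{0}$ et $R$ (toutes deux dans des modules ambiants lisses) donne $V_{0}|_{B_{0}Z}\in {\cal M}_{o}^{tf}(B_{0}Z)$ et $R|_{B}\in {\cal M}_{o}^{tf}(B)$, donc $V|_{B}$ est de pr\'esentation finie.

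Pour la r\'eciproque, on part d'une pr\'esentation finie $0\to R'\to \ind_{B_{0}Z}^{B}(W_{0})\to V|_{B}\to 0$ dans ${\cal M}_{o}^{tf}(B)$, et on pose $V_{0}:=o[KZ]W_{0}\subset V|_{KZ}$. Tout syst\`eme de $o[B_{0}Z]$-g\'en\'erateurs de $W_{0}$ est un syst\`eme de $o[KZ]$-g\'en\'erateurs de $V_{0}$, donc $V_{0}\in {\cal M}_{o}^{tf}(KZ)$; de plus $V_{0}$ engendre $V$ comme $o[G]$-module puisque $V=o[B]W_{0}\subset o[G]V_{0}$. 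Soit $R$ le noyau de la pr\'esentation $G$-\'equivariante associ\'ee $\ind_{KZ}^{G}(V_{0})\to V$. Il reste \`a v\'erifier $R\in {\cal M}_{o}^{tf}(G)$; or, par l'observation utilis\'ee dans la direction triviale (toute famille finie engendrant $R|_{B}$ comme $o[B]$-module engendre $R$ comme $o[G]$-module), il suffit de prouver $R|_{B}\in {\cal M}_{o}^{tf}(B)$.

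Pour cela je compare $R|_{B}$ \`a $R'$ au moyen du diagramme commutatif dont les lignes sont les deux $B$-pr\'esentations de $V|_{B}$ et dont les fl\`eches verticales sont l'inclusion $\ind_{B_{0}Z}^{B}(W_{0})\hookrightarrow \ind_{B_{0}Z}^{B}(V_{0}|_{B_{0}Z})$ (induite par $W_{0}\subset V_{0}|_{B_{0}Z}$) et l'identit\'e sur $V|_{B}$. Le lemme du serpent, combin\'e \`a l'exactitude de l'induction compacte, fournit
\[
R|_{B}/R' \ \simeq \ \ind_{B_{0}Z}^{B}\bigl(V_{0}|_{B_{0}Z}/W_{0}\bigr).
\]
Par l'observation, $V_{0}|_{B_{0}Z}\in {\cal M}_{o}^{tf}(B_{0}Z)$; le quotient $V_{0}|_{B_{0}Z}/W_{0}$ en est donc un aussi, et par suite $\ind_{B_{0}Z}^{B}(V_{0}|_{B_{0}Z}/W_{0})\in {\cal M}_{o}^{tf}(B)$. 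Joint \`a $R'\in {\cal M}_{o}^{tf}(B)$, cela entra\^\i ne $R|_{B}\in {\cal M}_{o}^{tf}(B)$. L'obstacle principal, qui est aussi le point technique central, est la descente de la finitude des g\'en\'erateurs de $o[KZ]$ \`a $o[B_{0}Z]$, malgr\'e le fait que l'espace de cosets $K/B_{0}\simeq {\bf P}^{1}(F)$ soit infini; elle se r\'esout en combinant la lissit\'e, la compacit\'e de $K$ et la centralit\'e de $Z$.
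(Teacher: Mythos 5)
Your proof is correct and follows essentially the same route as the paper: both rely on the Bruhat decomposition $G=BK$ with $B\cap KZ=B_{0}Z$ to identify $\ind_{KZ}^{G}(V_{0})|_{B}$ with $\ind_{B_{0}Z}^{B}(V_{0})$, together with the observation (from smoothness and compactness of $K$, $B_{0}$) that finite generation over $o[G]$ or $o[KZ]$ descends to $o[B]$ or $o[B_{0}Z]$; and for the converse, both saturate $W_{0}$ to $V_{0}=\langle K W_{0}\rangle$. The only cosmetic difference is that the paper controls the new kernel by invoking its Proposition \ref{fp}, whereas you re-derive the same control directly via the snake lemma identity $R|_{B}/R'\simeq\ind_{B_{0}Z}^{B}(V_{0}|_{B_{0}Z}/W_{0})$ and the stability of finite type under extensions.
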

  
\begin{proof} a) Pr\'eliminaires. La d\'ecomposition de Bruhat $G=BK, \ B\cap KZ=B_{0}Z$, montre que la restriction \`a $B$ d'une repr\'esentation induite compacte $ \ind_{KZ}^{G}(V_{0}) $ est 
\'egale \`a  $ \ind_{B_{0}Z}^{B}(V_{0}) $. Comme les repr\'esentations sont lisses, une repr\'esentation $V\in {\cal M}_{o}(G)$ est de type fini si et seulement si sa restriction \`a $B$ est de type fini, et une repr\'esentation $V_{0}\in {\cal M}_{o}(KZ)$ ou 
$V_{0}\in {\cal M}_{o}(B_{0}Z)$ est de type fini si et seulement si sa restriction \`a $Z$ est de type fini.

b) Soit  $ \ind_{KZ}^{G}(V_{0}) \to V$  une pr\'esentation finie de $V$ de noyau $R_{V}(V_{0})$. Alors  sa restriction \`a $B$  est une pr\'esentation finie
 $ \ind_{B_{0}Z}^{B}(V_{0}) \to V$ de $V|_{B}$, par les pr\'eliminaires.
 
c) Supposons que $V|_{B} $ soit de pr\'esentation finie. On choisit une pr\'esentation finie $ \ind_{B_{0}Z}^{B}(V_{0}) \to V$ de $V|_{B}$ avec $V_{0}\subset V|_{B_{0}Z}$. 
La sous-repr\'esentation $$V'_{0}:= <K V_{0}> \ \in {\cal M}_{o}(KZ)$$
de $V|_{KZ}$ est de type fini et la pr\'esentation naturelle
 $$ \ind_{B_{0}Z}^{B}(V'_{0}) \to V$$ 
 de $V|_{B}$  est finie par la proposition \ref{fp}.  Cette pr\'esentation finie de $V|_{B}$ est la  restriction \`a $B$  de la pr\'esentation naturelle 
  $$ \ind_{KZ}^{G}(V'_{0}) \to V$$ 
  de $V$. Cette pr\'esentation est finie par les pr\'eliminaires.
\end{proof}

 \bigskip  Rappelons les r\'esultats de Hu \cite{Hu}.
 
 \begin{theorem} \label{Hu} Lorsque $V\in {\cal M}_{o}(G)$ et  $V_{0}\in {\cal M}_{o}(KZ)$ sont irr\'eductibles avec un caract\`ere central et $V_{0}\subset V|_{KZ}$, on a les propri\'et\'es suivantes:

 a) L'espace vectoriel $< B^{+}V_{0}>$ ne d\'epend pas du choix de $V_{0}$.
 
 b)   $\Delta _{V}(V_{0})$ contient $V^{I_{1}}$, avec \'egalit\'e si $V$ n'est pas supersinguli\`ere.
 
 c)   $\Delta _{V}(V_{0})$ est finie si et seulement si $R_{V}(V_{0})$ est finie, et  dans ce cas $V$ est admissible.
 
 d)  Un quotient admissible $V'$ de $\ind_{KZ}^{G}(V_{0})$ tel que $\Delta_{V'}(V_{0})$ est fini,  est de longueur finie. De plus,  $< B^{+}V_{0}>^{U_{0}} \subset \Delta _{V}(V_{0})$, et $N_{V'}(V_{0})\in {\cal M}_{o}(G)$ est  de type fini.
 
 e) L'inclusion   $\Delta_{V}\ \subset \ <K \Delta_{V}>$ est le ``diagramme canonique'' de $V$, et deux repr\'esentations irr\'eductibles $ {\cal M}_{o}(G)$ avec un caract\`ere central sont isomorphes si et seulement leurs diagrammes canoniques sont isomorphes.
  
  \end{theorem}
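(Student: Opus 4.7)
Since the statement is explicitly quoted from Hu \cite{Hu}, the plan is to reproduce his strategy, with the vocabulary established in the previous chapters. The key geometric tool is the action of $G$ on its Bruhat--Tits tree: the two standard vertices correspond to $KZ$ and the ``opposite" maximal compact modulo center, and the successive balls around the standard vertex are naturally described via the filtration of $\ind_{KZ}^{G}(V_{0})$ by support. The essential algebraic tool is the analysis of $V^{I_{1}}$, where $I_{1}$ is the pro-$p$ Iwahori, and its decomposition under the Iwahori--Hecke algebra.

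For part a), I would argue as in proposition \ref{1} a): any two $V_{0},V_{0}'\subset V|_{KZ}$ generating $V$ satisfy $t^{n}V_{0}\subset V_{0}'$ and $t^{m}V_{0}'\subset V_{0}$ for suitable $n,m$, because $V$ is irreducible with a central character and $V_{0},V_{0}'$ are finitely generated over $KZ$. Then stability of both members under $B^{+}=B_{0}Zt^{\bf N}$ forces $\langle B^{+}V_{0}\rangle=\langle B^{+}V_{0}'\rangle$ on the nose (the irreducibility removes the ``modulo finite'' ambiguity of proposition \ref{1}). For part b), I would use the Iwahori decomposition $I_{1}=U_{0}\,T_{0}^{(1)}\,U_{0}^{op,(1)}$ and the Iwasawa decomposition $G=BK$ together with a direct computation on the tree to show that a vector fixed by $I_{1}$ can simultaneously be written as a sum of translates from $B^{+}V_{0}$ and from $(B-B^{+})V_{0}$, giving $V^{I_{1}}\subset\Delta_{V}(V_{0})$. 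The reverse inclusion in the non-supersingular case would then be done by hand on each type of non-supersingular irreducible (character, twist of Steinberg, irreducible principal series): for each, the classification gives an explicit model where $\Delta_{V}(V_{0})$ and $V^{I_{1}}$ can be matched.

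For part c), one direction is already proposition \ref{1} b). Conversely, if $\Delta_{V}(V_{0})$ is finite, the exact sequence of lemma \ref{0'} together with the fact that $R_{V}^{+}(V_{0})$ is captured by the $B^{+}$-action on $\ind_{KZ}^{G}(V_{0})$ (which is pro-finitely generated over the appropriate compact subgroup) shows $R_{V}(V_{0})$ is of finite type; admissibility is then extracted from the fact that $V^{U_{0}}$ lives in the finitely controlled piece $\Delta_{V}(V_{0})$ plus something $B^{+}$-stable, and lemma \ref{le3} allows to reduce to $p_{o}=0$ first. For part d), the essential input is a bound on length: under admissibility and $|\Delta_{V'}(V_{0})|<\infty$, the dimension of $V'{}^{I_{1}}$ is bounded; since every irreducible subquotient of $V'$ contributes at least one $I_{1}$-fixed line (by part b applied to each subquotient), the length is bounded. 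The inclusion $\langle B^{+}V_{0}\rangle^{U_{0}}\subset\Delta_{V}(V_{0})$ follows from b) applied to each irreducible subquotient, and finite type of $N_{V'}(V_{0})$ is then a consequence of the noetherianity of $\mathcal{M}_{o}(KZ)$ given by lemma \ref{3}.

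The hardest part is certainly e). The pair $(\Delta_{V},\langle K\Delta_{V}\rangle)$ is a ``diagram'' in the sense of Paskunas; the point is to reconstruct $V$ from it. I would argue that $V$ is generated by $V_{0}\subset \langle K\Delta_{V}\rangle$ and that the only $G$-equivariant relations are those forced by the identification of $\Delta_{V}$ inside $\langle K\Delta_{V}\rangle$ together with the $B^{+}$-action (encoded in $D_{V}(V_{0})$). An isomorphism of canonical diagrams thus lifts to a $KZ$-equivariant isomorphism of $V_{0}$'s that is compatible with the $B^{+}$-action, hence extends to a $G$-equivariant isomorphism by the universal property of compact induction and irreducibility. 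The main obstacle in e), as in Hu's treatment, is to show rigidity of the lift, i.e.\ that no extra identification can occur; this relies on the supersingular/non-supersingular dichotomy and on the fine analysis of $\Delta_{V}$ developed in b).
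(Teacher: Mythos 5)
The paper does not actually prove this theorem: it is introduced with ``Rappelons les r\'esultats de Hu'' and the ``proof'' consists solely of pointers into Hu's preprint (\cite{Hu} Cor.~3.15 for a), Th.~1.2 for b), Th.~1.3 and Th.~4.3 for c), Cor.~3.24 and Prop.~4.5 for d), Th.~3.17 for e)). Your instinct that the substance has to be recovered from \cite{Hu} is therefore correct, and a comparison can only be against Hu's arguments, not against anything this paper supplies.

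With that caveat, a few places in your outline are not quite right as written. In a), the claim that one may arrange $t^{n}V_{0}\subset V_{0}'$ and $t^{m}V_{0}'\subset V_{0}$ is too strong: the mechanism of proposition \ref{1} only produces $t^{n}V_{0}\subset\langle B^{+}V_{0}'\rangle$, and the resulting defect is a finitely generated $o[B_{0}Z]$-module $M_{1}$, not something that vanishes. Removing this defect is exactly the content of Hu's Cor.~3.15, and it does \emph{not} follow formally from irreducibility, since an irreducible smooth $o$-representation of $GL(2,F)$ with $F\neq\mathbf{Q}_{p}$ is not known to be admissible. In b), proposition \ref{4} of the present paper only gives $V_{0}^{I_{1}}\subset\Delta_{V}(V_{0})$; upgrading this to $V^{I_{1}}\subset\Delta_{V}(V_{0})$ requires first establishing $V^{I_{1}}\subset\langle B^{+}V_{0}\rangle$, which is a genuine further step and not a consequence of the Iwahori decomposition alone. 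Finally, d) and e) are described only heuristically; e) in particular (reconstruction of $V$ from its canonical diagram, together with rigidity of the lift) is the central theorem of \cite{Hu} and would need a full argument, not a paragraph. None of this undermines your high-level picture, but be aware that the paper under review never attempts any of these arguments itself.
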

  
  \begin{proof} a) (\cite{Hu} Cor. 3.15) .
  
  b)  (\cite{Hu} Th. 1.2) . 
  
  c)  (\cite{Hu} Th. 1.3, Th. 4.3) . De plus, $\Delta _{V}(V_{0})$ est finie si et seulement si $R_{V}(V_{0})$ est finie, pour tout quotient non trivial $V$ de $\ind_{KZ}^{G}(V_{0})$.
  
  d) (\cite{Hu} Cor. 3.24, Prop. 4.5) .
  
  e) (\cite{Hu} Th. 3.17) .
  \end{proof}

\bigskip Nous allons \'etendre ces r\'esultats au cas o\`u $V\in {\cal M}_{o-tor}(G)$ 
et $V_{0} \in {\cal M}_{o-tor}(KZ)$ ne sont pas irr\'eductibles. 
Notons $$ {\cal M}_{o-tor}^{apf}(G)$$ la sous-cat\'egorie de  $ {\cal M}_{o-tor}(G)$ des repr\'esentations admissibles et de pr\'esentation finie.  Nous allons voir  que  $ {\cal M}_{o-tor}^{apf}(G)$ est une cat\'egorie abelienne, et une sous-cat\'egorie de Serre de $  {\cal M}_{o-tor}(G)$ contenue dans la cat\'egorie $ {\cal M}_{o-tor}^{tf}(G)$ des repr\'esentations de type fini.

 \begin{theorem} \label{lf} Soit  $V\in {\cal M}_{o-tor}(G)$ une repr\'esentation  admissible de pr\'esentation finie. Alors $V$ est de longueur finie et   tous les sous-quotients de $V$ sont admissibles, de pr\'esentation finie. 
    \end{theorem}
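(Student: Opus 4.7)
The proof is a devissage to the atomic case where $V_0$ is absolutely irreducible with central $Z$-character, handled by Hu's Theorem~\ref{Hu}. First, admissibility of $V$ implies $V$ is $Z$-localement finie, so one can choose a finite presentation $\ind_{KZ}^G(V_0) \to V$ with $V_0 \subset V|_{KZ}$ of finite type. By Lemma~\ref{le3}(2), $V_0$ is then finite as an $o$-module, and after extending scalars via Lemma~\ref{ext} (which preserves both admissibility and finite presentation) we may assume the Jordan--H\"older factors $W^{(1)}, \ldots, W^{(n)}$ of $V_0$ in ${\cal M}_o(KZ)$ are absolutely irreducible, each with $Z$ acting by a character. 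Pushing the corresponding filtration of $V_0$ through the exact functor $\ind_{KZ}^G$ and then into $V$ yields a $G$-filtration $0 = V^{(0)} \subset \ldots \subset V^{(n)} = V$ whose successive quotients $V^{(j)}/V^{(j-1)}$ are quotients of $\ind_{KZ}^G(W^{(j)})$.

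Each $V^{(j)}$ is admissible as a sub-$G$-representation of $V$ (noetherianity of $o$) and of finite type as it is generated by the image of $V_0^{(j)}$. By Proposition~\ref{suite}(b), $V/V^{(j)}$ is of finite presentation; iterating, each $V^{(j)}/V^{(j-1)}$ is admissible and finitely presented. It therefore suffices to establish the theorem when $V$ is an admissible, finitely presented quotient of $\ind_{KZ}^G(W)$ with $W$ absolutely irreducible and $Z$-central. By Lemma~\ref{le3}(1) and the filtration by $p_o^i V$, we may further reduce to $p_o V = 0$, in which case $V$ is a $k$-representation and we are in the native habitat of Hu's theorem.

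In this atomic case, Hu~(b) identifies $\Delta_V(W)$ with $V^{I_1}$ in the non-supersingular situation (which is finite by admissibility of $V$), while supersingular irreducible quotients are of length one trivially. Combining this with Theorem~\ref{tech}(e) --- which forces $<B^+W>^{U_0}$ to be finite for $V$ extending to an admissible $G$-representation --- one invokes Hu~(d) to conclude that $V$ has finite length with irreducible admissible factors, each automatically of finite presentation by Proposition~\ref{suite}. Reassembling with the initial $n$-step filtration finishes the proof of finite length for the original $V$; admissibility and finite presentation of all its subquotients then follow from Proposition~\ref{suite}(a),~(b) and noetherianity.

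The main obstacle is the atomic case: transferring Hu's machinery, formulated for absolutely irreducible $k$-representations of $G$ with a central character, to our admissible but not-necessarily-irreducible $o$-torsion setting. The scalar-extension and Jordan--H\"older devissage handles the reduction itself, but the essential depth lies in verifying that the conditions ``admissible'' together with ``$R_V(W)$ of finite type'' suffice to trigger Hu's finite-length conclusion, via the control of $\Delta_V(W)$ provided by Theorem~\ref{tech}(e) and Hu~(b); the other reductions follow the familiar pattern of Section~2 (Lemma~\ref{ext}, Remark~\ref{re3}, Proposition~\ref{suite}).
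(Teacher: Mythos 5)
Your overall architecture mirrors the paper's: choose a finite presentation, extend scalars so that $V_{0}$ has a Jordan--H\"older filtration by absolutely irreducible $W^{(j)}$ with central character, push through $\ind_{KZ}^{G}$, and reduce to the atomic case where $V_{0}$ is irreducible, handled by Hu. But there is a genuine gap at the step where you claim ``By Proposition~\ref{suite}(b), $V/V^{(j)}$ is of finite presentation; iterating, each $V^{(j)}/V^{(j-1)}$ is admissible and finitely presented.'' Proposition~\ref{suite}(b) shows a \emph{quotient} $V''$ inherits finite presentation when $V'$ is of finite type and $V$ is of finite presentation; it does not show the kernel $V'$ (here $V^{(j)}/V^{(j-1)}$, the sub-object) is of finite presentation. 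That is precisely the hard direction, and the paper handles it through Hu's Theorem~\ref{Hu}(c): one observes $\Delta_{V'}(V'_{0}) \subset \Delta_{V}(V_{0})$ is finite (by Proposition~\ref{1}b), and Hu's (c) converts the finiteness of $\Delta_{V'}(V'_{0})$ into finite type of the kernel $R_{V'}(V'_{0})$ when $V'_{0}$ is irreducible with central character. Without this input your devissage stalls at $j=1$, since $V^{(1)}$ is a sub-representation of $V$, not a quotient.

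Two smaller points. You invoke Theorem~\ref{tech}(e) and a ``filtration by $p_o^{i}V$'' via Lemma~\ref{le3}(1), neither of which is used in the paper's proof: Hu's (d) applied directly to an admissible quotient of $\ind_{KZ}^{G}(W)$ with $\Delta$ finite suffices, and once $W$ is irreducible with central character the quotient is automatically killed by $p_o$. More importantly, the theorem also asserts that \emph{all subquotients} of $V$ are of finite presentation, which includes arbitrary sub-representations $V'\subset V$ after the devissage is done; the paper devotes the second half of its proof to this, again via $\Delta_{V'}(V'_{0})\subset\Delta_{V}(V_{0})$ finite and Hu's (c) (with an induction on length when $V'$ is reducible), together with the cited general theorem that subquotients of admissible $o$-torsion representations are admissible. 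Your final sentence attributing everything to ``Proposition~\ref{suite}(a),~(b) and noetherianity'' does not cover this; the load-bearing ingredient is Hu~(c), and it needs to be named.
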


\begin{proof}  Notons que$V$ est annul\'e par une puissance de $p_{L} $.  Par un th\'eor\`eme g\'en\'eral, \cite{Lang} th. 2, tout sous-quotient d'une repr\'esentation admissible  $V\in {\cal M}_{o-tor}(G)$  annul\'e par une puissance de $p_{L}$ est admissible. 

Soit $V_{0}\in {\cal M}_{o-tor}(KZ)$  contenue dans $V|_{KZ}$ telle que  le  morphisme naturel $f: \ind_{KZ}^{G}V_{0}\to V$ soit une pr\'esentation finie de $V$. Alors  $\Delta_{V}(V_{0})$ est fini (Prop. \ref{1}).

Montrons que $V$ est de longueur finie.

a)  Si $V_{0}$ est  irr\'eductible avec un caract\`ere central,  un quotient admissible de $\ind_{KZ}^{G}(V_{0})$ est  de longueur finie par Hu (Th. \ref{Hu} d)). 

b) Si   tous les sous-quotients irr\'eductibles  de $V_{0}$ ont un caract\`ere central, on montre par induction sur la longueur de $V$ que 
  $V$ est de longueur finie. Pour une sous-repr\'esentation  irr\'eductible $V'_{0}$ de $V_{0}$  de quotient  $V_{0}''=V_{0}/V_{0}'$. 
 l'image de la suite exact $$0 \to  \ind_{KZ}^{G}(V_{0}')\to  \ind_{KZ}^{G}(V_{0}) \to   \ind_{KZ}^{G}(V''_{0}) \to 0 \ $$
  par la pr\'esentation finie $ f:\ind_{KZ}^{G}(V_{0}) \to V $ est une suite exacte
$$0 \to  V' \ \to \  V \  \to  \ V'' \  \to 0 \  .$$
Les noyaux forment une suite exacte 
$$0 \to  \ R_{V'}(V'_{0}) \ \to  \ R_{V}(V_{0}) \ \to  \ R_{V''}(V''_{0}) \to 0 \ .$$
Comme $R_{V}(V_{0})\in {\cal M}_{o-tor}(G)$ est de type fini,   le quotient $R_{V''}(V''_{0})$ est de type fini.  Par la proposition \ref{1} , $\Delta_{V}(V_{0})$ est fini, donc  
   $\Delta_{V'}(V'_{0})$ \'etant contenu dans  $\Delta_{V}(V_{0})$ est fini.
    Par  le th\'eor\`eme \ref{Hu} c), $R_{V'}(V'_{0})$ est de type fini. Les repr\'esentations $V',V''$ sont donc de pr\'esentation finies. Elles sont aussi admissibles. Par a), $V'$ est de longueur finie.  Par induction sur la longueur de $V_{0}$,
  $V''$ est de longueur finie. Donc $V$ est de longueur finie.

c) Il existe une extension finie $o'/o$ telle que    tous les sous-quotients irr\'eductibles  de $V_{0,o'}:=o'\otimes_{o}V_{0}$ ont un caract\`ere central. Par  le lemme \ref{ext},   $V_{o'}:=o'\otimes_{o}V$ est admissible et 
 la  pr\'esentation $\ind_{KZ}^{G}(V_{0,o'})\to V_{o'}$ est   finie. Par b),  $V_{o'}=o'\otimes_{o}V\in {\cal M}_{o'}(G)$ est de longueur finie. A fortiori $V$ est de longueur finie.

Montrons maintenant que tous les sous-quotients de $V$ sont de pr\'esentation finie. Tous les sous-quotients de $V$ d'une repr\'esentation de longueur finie sont de longueur finie. Par la proposition \ref{suite} b), tous les quotients de $V$ sont de pr\'esentation finie. 
Montrons qu'une sous-repr\'esentation $V'$ de $V$  est  de pr\'esentation finie. Soit $V'':=V/V'$.

On se ram\`ene par extension des scalaires, comme on le peut par  le th\'eor\`eme \ref{lf} 3) et le lemme \ref{ext},  au cas 
o\`u tous les sous-quotients irr\'eductibles de $V$ ont un caract\`ere central. 

On choisit, comme on le peut (Prop. \ref{suite} a)), des $o[KZ]$-sous-modules $V_{0}', V_{0},V''_{0}$ finis et engendrant   $V',V,V''$ formant une suite exacte
 $$0\to V' _{0}\to V_{0} \to V''_{0} \to 0$$
dans  $ {\cal M}_{o}^{f}(KZ)$.  L'injection  $\Delta_{V'}(V'_{0})\to \Delta_{V}(V_{0})$ implique que $\Delta_{V'}(V'_{0})$ est fini puisque $ \Delta_{V}(V_{0})$ est fini.

Supposons  $V'$ irr\'eductible.  On choisit une sous-repr\'esentation irr\'eductible $W_{0}$ de $V'_{0} $. Naturellement $\Delta_{V'}(W_{0})$ est fini puisque contenu dans $\Delta_{V'}(V'_{0})$. 
Par  le th\'eor\`eme \ref{Hu} c), la pr\'esentation $\ind_{KZ} ^{G}(W_{0})\to V'$ est finie. 

Supposons $V'$ r\'eductible.  Par une r\'ecurrence  sur  la longueur, $V'/W$  est de pr\'esentation finie pour toute   sous-repr\'esentation irr\'eductible $W$ de $V'$, car elle est contenue dans $V/W$ qui est admissible et de longueur finie. Comme $W, V'/W$ sont de pr\'esentation finie, la repr\'esentation $V'$ est de pr\'esentation finie par la proposition \ref{suite} a). 
  \end{proof}  

  \begin{proposition} Soit $V\in {\cal M}_{o-tor}(G)$   de type fini, et soit $V_{0}$ une sous-repr\'esentation de $V|_{GL(2,O_{F})Z}$ g\'en\'eratrice. Alors  $R_{V}(V_{0})\in  {\cal M}_{o-tor}(G)$  est   de type fini, si $V$ est admissible et  $\Delta_{V}(V_{0})$  fini. 
  Inversement, $R_{V}(V_{0} )$ de type fini implique $\Delta_{V}(V_{0})$ fini.   \end{proposition}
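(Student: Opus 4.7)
The converse direction is quick: assuming $R_V(V_0)$ of finite type, Proposition \ref{1} b) yields $\Delta_V(V_0) \in {\cal M}_o^{tf}(B_0Z)$; since $V$ is admissible and torsion, so is its subrepresentation $\Delta_V(V_0)$, which is then $Z$-locally finite, so that Lemma \ref{le3} 2) ensures $\Delta_V(V_0)$ is finite.

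For the forward direction, I induct on the length of $V_0$ after a standard reduction. By Proposition \ref{fp}, I may enlarge $V_0$ to any finite-type generating subrepresentation of $V|_{KZ}$; since $V$ is admissible, $V_0$ is then admissible, $Z$-locally finite, of finite type and torsion, hence finite as in Lemma \ref{le3} 2). Using Lemma \ref{ext} and Remark \ref{re3}, I further assume every irreducible subquotient of $V_0$ carries a central character. The base case, $V_0$ irreducible with central character, is Hu's Theorem \ref{Hu} c).

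For the inductive step, pick an irreducible subrepresentation $V_0^* \subset V_0$ with central character, and set $V' := <G V_0^*> \subset V$ and $V'' := V/V'$; both are admissible as subquotients of $V$ (\cite{Lang}). If $V' = V$, then the irreducible $V_0^*$ already generates $V$, and Theorem \ref{Hu} c) applied to $V_0^*$, followed by Proposition \ref{fp}, delivers $R_V(V_0)$ of finite type. Otherwise, set $V_0' := V_0 \cap V'$ (which contains $V_0^*$ and hence generates $V'$) and $V_0'' := V_0/V_0'$ (which generates $V''$), both strictly shorter than $V_0$; the preliminaries of Proposition \ref{suite} then provide the exact sequence of kernels
$$0 \to R_{V'}(V_0') \to R_V(V_0) \to R_{V''}(V_0'') \to 0.$$
The inclusion $\Delta_{V'}(V_0') \subset \Delta_V(V_0) \cap V'$ shows that $\Delta_{V'}(V_0')$ is finite, so the induction hypothesis yields $R_{V'}(V_0')$ of finite type, and it remains to verify that $\Delta_{V''}(V_0'')$ is finite.

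This last verification is the main technical obstacle. Writing $A := <B^+V_0>$, $C := <(B-B^+)V_0>$, $\tilde A := <B^+V_0'>$, $\tilde C := <(B-B^+)V_0'>$, a direct computation establishes the identity $A \cap (C + V') = \tilde A + \Delta_V(V_0)$ inside $V$: for any $a = c + v'$ in $A$, decompose $v' = \tilde a + \tilde c$ using $V' = \tilde A + \tilde C$, whence $a - \tilde a = c + \tilde c \in A \cap C = \Delta_V(V_0)$. Passing to the quotient $V'' = V/V'$ and noting $\tilde A \subset V'$, this gives $\Delta_{V''}(V_0'') \cong \Delta_V(V_0)/(\Delta_V(V_0) \cap V')$, a quotient of the finite module $\Delta_V(V_0)$, hence itself finite. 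The induction hypothesis applied to $V''$ then provides $R_{V''}(V_0'')$ of finite type, and the exact sequence above closes the induction.
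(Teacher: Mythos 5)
Your proof follows the same route the paper intends: pass to the case where all irreducible subquotients of $V_0$ have a central character (Lemma \ref{ext}, Remark \ref{re3}), then reduce to $V_0$ irreducible, then invoke Hu. The paper dismisses the reduction to $V_0$ irreducible as ``une v\'erification facile'' and you supply exactly that verification by an induction on the length of $V_0$. Your key computation — that inside $V$ one has $A\cap(C+V')=\tilde A+\Delta_V(V_0)$, hence by the modular law $(A+V')\cap(C+V')=\Delta_V(V_0)+V'$, hence $\Delta_{V''}(V_0'')\cong \Delta_V(V_0)/(\Delta_V(V_0)\cap V')$ is a quotient of the finite module $\Delta_V(V_0)$ — is correct and is precisely the point that makes the inductive step work; it parallels what the paper does more explicitly in the proof of Theorem \ref{lf}, and your treatment of the base case (via the extension of Theorem \ref{Hu}~c) to an arbitrary nontrivial quotient $V$ of $\ind_{KZ}^G(V_0)$, then Proposition \ref{fp} to pass from $V_0^*$ to $V_0$) is also correct.

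Two small remarks on precision. First, the sentence ``By Proposition \ref{fp}, I may enlarge $V_0$'' is not the right justification: Proposition \ref{fp} passes from a finite presentation via $V_0$ to one via a larger $W_0'$, not the other way, so it does not let you replace a given $V_0$ by a larger one and then come back. What you actually need — and what you use afterwards — is simply that the given $V_0$, being of finite type (implicit in the statement for $R_V(V_0)$ to make sense as a presentation kernel of finite type), admissible as a subrepresentation of the admissible $V$, torsion and hence $Z$-locally finite, is \emph{finite} by Lemma \ref{le3}~2); no enlargement is needed. Second, your converse uses admissibility of $V$ to pass from ``$\Delta_V(V_0)$ of finite type over $o[B_0Z]$'' (Proposition \ref{1}~b)) to ``$\Delta_V(V_0)$ finite''; the statement phrases the converse without repeating the admissibility hypothesis, but reading it as carried over is reasonable and matches the paper's terse ``Vu la proposition \ref{1}''.
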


\begin{proof}  Vu la proposition \ref{1}, il suffit de montrer que   $\Delta_{V}(V_{0})$ fini et $V$ admissible  implique $R_{V}(V_{0} )$  de type fini. Comme pr\'ec\'edemment, on se ram\`ene au cas o\`u les sous-quotients irr\'eductibles de $V_{0}$ ont un caract\`ere central.  Une v\'erification facile permet ensuite de se ramener au cas o\`u $V_{0}$ est irr\'eductible. Puis on applique Hu.

\end{proof}

On note 
 $$s:=\pmatrix {0&1 \cr 1&0} \ \ , \ \ \w:=\pmatrix {0&1 \cr p_{F}&0}  = st  \  .$$
 L'\'el\'ement $\w$ normalise le sous-groupe d'Iwahori $I$  image inverse de $B(q):=B({\bf F_{q}})$ dans $K$, ainsi que 
 le pro-$p$-sous-groupe de Sylow $I_{1}$ de $I$. Le normalisateur de $I$ dans $G$ est le groupe $ IZ \cup \w IZ$.  Le groupe $G$ est engendr\'e par $KZ$ et $\w$.

\begin{lemma}\label{base} On a  $(B-B^{+})K \ = \ \w B^{+} K$ et  l'union 
$$G\quad =\quad B^{+}K \ \cup \ \w B^{+} K$$ est disjointe et $I$-stable \`a gauche.
\end{lemma}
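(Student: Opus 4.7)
The plan is to combine the Iwasawa decomposition $G = BK$ with the partition $B = B^+ \sqcup (B - B^+)$ to get $G = B^+K \cup (B - B^+)K$, and then identify $(B - B^+)K$ with $\omega B^+ K$ via an explicit identity. For the disjointness, if $b^+ k^+ = b^- k^-$ with $b^{\pm} \in B^{\pm}$ (notation for $B^+$ and $B - B^+$), then $(b^-)^{-1} b^+ \in B \cap K = B_0 \subseteq B^+$, and since $B^+$ is closed under right multiplication by $B_0$ (obvious from $B^+ = B_0 Z t^{\mathbf{N}}$ being a semigroup containing $B_0$), this would force $b^- \in B^+$, contradiction.

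The key algebraic identity is
\[
t^n \; = \; \omega \cdot (p_F^n t^{-n-1}) \cdot s,
\]
verified by direct $2\times 2$ matrix multiplication. For $n<0$ we have $-n-1 \geq 0$, so $p_F^n t^{-n-1} \in Z \cdot t^{\mathbf{N}} \subseteq B^+$, giving $t^n \in \omega B^+ K$. To propagate this to all of $B - B^+ = \bigsqcup_{n<0} B_0 Z t^n$ one verifies that $\omega^{-1} B_0 \omega \subset I$ by computing $\omega^{-1}\bigl(\begin{smallmatrix}\alpha & \beta\\ 0 & \delta\end{smallmatrix}\bigr)\omega = \bigl(\begin{smallmatrix}\delta & 0\\ p_F\beta & \alpha\end{smallmatrix}\bigr)$, so $B_0 \omega \subseteq \omega \cdot I$, and then an auxiliary lemma $I\cdot B^+ \subseteq B^+ K$ finishes the inclusion $(B - B^+)K \subseteq \omega B^+ K$. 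The auxiliary lemma itself is proved by explicitly doing the Iwasawa decomposition of $u^- \cdot b$ for $u^- = \bigl(\begin{smallmatrix}1 & 0\\ x & 1\end{smallmatrix}\bigr)$, $x\in p_F O_F$, and $b = \bigl(\begin{smallmatrix}a & b\\ 0 & d\end{smallmatrix}\bigr) \in B^+$: right-multiplication by $k = \bigl(\begin{smallmatrix}1 & 0\\ -xa/(xb+d) & 1\end{smallmatrix}\bigr)$ lies in $K$ (because $v(x)\geq 1$ forces $v(xb) > v(d)$, hence $v(xb+d) = v(d)$) and produces an upper triangular matrix with entries satisfying the $B^+$-conditions $v(a)\geq v(d)$ and $b/d \in O_F$.

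The main obstacle is the reverse inclusion $\omega B^+ K \subseteq (B - B^+)K$, equivalently the disjointness $\omega B^+ K \cap B^+ K = \emptyset$. Here one exploits $\omega = s\cdot t$ to write $\omega b = s(tb)$ with $tb \in B^+$, and the crucial structural observation: elements of $tB^+$ satisfy the \emph{strict} condition $b/d \in p_F O_F$ (not merely $O_F$), because conjugation by $t$ carries $\bigl(\begin{smallmatrix}\alpha & \beta\\ 0 & \delta\end{smallmatrix}\bigr) \in B_0$ to $\bigl(\begin{smallmatrix}\alpha & p_F\beta\\ 0 & \delta\end{smallmatrix}\bigr)$. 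Computing the Iwasawa of $sb^*$ for $b^* = tb$ (splitting into cases $v(a^*) \geq v(b^*)$ or $v(a^*) < v(b^*)$) produces an upper triangular $b''$ whose ratio $b''/d''$ equals $d^*/b^*$ or whose valuation satisfies $v(a'') < v(d'')$; in either case $b'' \in B - B^+$, as desired. Combining both inclusions yields $G = B^+ K \sqcup \omega B^+ K$ with the pieces disjoint.

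Finally, the left $I$-stability of $B^+K$ is immediate from $I\cdot B^+ \subseteq B^+ K$ (the auxiliary lemma above). The $I$-stability of $\omega B^+ K$ reduces to the well-known fact that $\omega$ normalizes the Iwahori (i.e. $\omega I\omega^{-1} = I$, easily verified on generators of $I = U^-(p_F O_F)\cdot T_0 \cdot U_0$), so $I\omega B^+ K = \omega(\omega^{-1}I\omega)B^+ K = \omega I B^+ K \subseteq \omega B^+ K$.
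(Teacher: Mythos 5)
Your proof is correct, but it takes a genuinely different route from the paper's. Both arguments start from the Bruhat decomposition $G = BK$, $B \cap K = B_0$, which gives the disjoint union $G = B^+K \sqcup (B - B^+)K$, and both use the fact that $\omega$ normalizes $I$ for the $I$-stability of $\omega B^+K$. But to identify $(B-B^+)K$ with $\omega B^+K$ the paper invokes the Cartan decomposition $G = \bigsqcup_{n\in\mathbf N} Kt^n KZ$ and Iwahori coset representatives, showing $B^+K = \cup_n It^nKZ$ and then reading off the complement as $\cup_n \omega It^nKZ$; your argument instead replaces Cartan by two explicit computations: the identity $t^n = \omega\cdot(p_F^n t^{-n-1})\cdot s$, which directly places each $t^n$ with $n<0$ into $\omega B^+ K$, and an elementary Iwasawa decomposition of $s(tb)$ for $b\in B^+$, which gives the reverse containment $\omega B^+ K \subseteq (B-B^+)K$. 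The computations check out (the ratio $v(d^*)-v(b^*)\le -1$ in your Case~1, $v(a'')=v(d^*)<v(a^*)=v(d'')$ in Case~2, and the auxiliary lemma $IB^+\subseteq B^+K$ via $v(xa/(xb+d))\ge 1$ all hold, modulo the harmless degenerate case $b^*=0$ where one right-multiplies by $s$ instead). The trade-off: the paper's argument is shorter if one grants the Cartan decomposition and the Iwahori factorization $K=I\sqcup U(q)sI$ as known, whereas your version is self-contained and entirely elementary, needing only $2\times 2$ matrix algebra and valuations, at the cost of several case distinctions.
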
 
 
\begin{proof} La d\'ecomposition de Bruhat, $G=BK$ et  $B\cap K=B_{0}$ implique que 
$$G= B^{+}K \cup (B-B^{+})K \ $$
et que l'union est disjointe.
Comme l'on a $$B^{+}\ =\ \cup_{n\in \bf N}\ B_{0}t^{n}Z \quad , \quad I\ =\ B_{0}U_{1}^{-} \quad , $$ o\`u $U_{1}^{-}= U^{-}(p_{F}O_{F}) $ et $U^{-} $ est le groupe des matrices strictement triangulaires inf\'erieures de $G$, nous avons 
$$It^{n}KZ = B_{0} t^{n}KZ=B_{0}t^{n}KZ \quad, \quad B^{+}K= \cup_{n\in \bf N} It^{n}KZ \ .$$
Donc $B^{+}K $ est $I$-stable \`a gauche.
   Par les d\'ecompositions de Cartan (v\'erifier la derni\`ere \'egalit\'e),  $G$ est \'egal aux unions disjointes: 
$$G \ = \ \cup_{n\in \bf N}K t^{n} KZ  \ = \ \cup_{n\in \bf N}I t^{n} KZ  \ \cup_{n\in \bf N}\w I t^{n} KZ \ . $$
On en  d\'eduit avec la formule pour $B^{+} K$,  que le compl\'ementaire de $B^{+}K$ dans $G$ est $\w B^{+}K$ qui est  aussi $I$-stable car $\w$ normalise $I$.
\end{proof}

 Comme exemple d'application,  on  a:

\begin{proposition} \label{4} Soit $V\in {\cal M}_{o}(G)$   et soit $V_{0}$  une sous-repr\'esentation de $V|_{KZ}$ g\'en\'eratrice. Alors les deux $o$-modules
$$<B^{+}V_{0}> \quad , \quad <(B-B^{+})V_{0}> \ = \  \w < B^{+}V_{0}> \ , $$
 sont stables par $IZ$. Leur intersection
$$ \Delta_{V} (V_{0}) =<B^{+}V_{0}> \ \cap \ \w <B^{+}V_{0}>$$
contient $V_{0}^{I_{1}}$ et est stable par  $ IZ \cup \w IZ$.  
\end{proposition}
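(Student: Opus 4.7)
All four assertions follow from Lemma~\ref{base} together with the $KZ$-stability of $V_0$. Since $V_0$ is $K$-stable and $Z$ is central (with $Z\subset B^+$), $B^+V_0 = B^+K \cdot V_0$, so $<B^+V_0>$ is the $o$-span of $(B^+K)\cdot V_0$. Lemma~\ref{base} says $I\cdot B^+K\subset B^+K$, so for $i\in I$, $b\in B^+$, $k\in K$, $v\in V_0$, writing $ibk=b'k'$ with $b'\in B^+$ and $k'\in K$, one gets $i(bkv)=b'(k'v)\in B^+V_0$. This gives $I$-stability; $Z$-stability is immediate since $Z$ is central. The identification $(B-B^+)K=\omega B^+K$ from Lemma~\ref{base} then yields
$<(B-B^+)V_0>=<(B-B^+)K\cdot V_0>=<\omega B^+K\cdot V_0>=\omega<B^+V_0>$,
and the same reasoning applied to the complementary $I$-stable piece $\omega B^+K$ gives $IZ$-stability of this second module.

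The intersection $\Delta_V(V_0)$ is automatically $IZ$-stable. For the $\omega$-stability, I use $\omega^2=p_F\,\id\in Z$ combined with the $Z$-stability just proved, which gives $\omega^2<B^+V_0>=p_F<B^+V_0>\subset <B^+V_0>$. Hence
$$\omega\Delta_V\;=\;\omega<B^+V_0>\,\cap\,\omega^2<B^+V_0>\;\subset\;\omega<B^+V_0>\,\cap\,<B^+V_0>\;=\;\Delta_V.$$
Together with $\omega IZ\omega^{-1}=IZ$ (valid because $\omega$ normalizes both $I$ and $Z$), this yields stability of $\Delta_V(V_0)$ under $IZ\cup\omega IZ$.

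It remains to prove $V_0^{I_1}\subset \Delta_V(V_0)$. The inclusion $V_0^{I_1}\subset V_0\subset <B^+V_0>$ is trivial. For the substantive inclusion $V_0^{I_1}\subset \omega<B^+V_0>$, given $v\in V_0^{I_1}$ one seeks $w\in <B^+V_0>$ with $v=\omega w$. Since $\omega$ normalizes $I_1$, the natural candidate $w=\omega^{-1}v=p_F^{-1}\omega v$ is itself $I_1$-fixed; factoring $\omega=st$ rewrites $w$ as $t^{-1}(sv)$ with $sv\in V_0$ (as $s\in K$). The strategy is to exploit the fixity of $v$ by the deep lower unipotent $U_1^-=U^-(p_FO_F)$, which under conjugation by $t$ and $s$ produces invariance properties of $sv$ (specifically, fixity by $tU_1^-t^{-1}=U^-(O_F)$ on the $s$-transform), and to use these to express $t^{-1}(sv)$ explicitly as an $o$-linear combination of vectors in $B^+V_0$. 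This last identification is the main technical obstacle and carries the Iwahori-specific content of the proposition.
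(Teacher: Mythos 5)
Your treatment of the first three assertions is correct and follows the route the paper has in mind: the paper's proof simply says ``\'Evident avec le lemme \ref{base},'' and your unpacking --- $IZ$-stability of $<B^{+}V_{0}>$ from $KZ$-stability of $V_{0}$ together with the left $I$-stability of $B^{+}K$ and $Z\subset B^{+}$, the identity $<(B-B^{+})V_{0}>=\omega<B^{+}V_{0}>$ from $(B-B^{+})K=\omega B^{+}K$, and the $\omega$-stability of $\Delta_{V}(V_{0})$ via $\omega^{2}=p_{F}\id\in Z$ --- is exactly what that lemma delivers.

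For the remaining assertion $V_{0}^{I_{1}}\subset\Delta_{V}(V_{0})$ you correctly isolate the trivial half $V_{0}^{I_{1}}\subset V_{0}\subset<B^{+}V_{0}>$ and identify the substantive half as $V_{0}^{I_{1}}\subset\omega<B^{+}V_{0}>$, i.e.\ $\omega^{-1}v\in<B^{+}V_{0}>$ for $v\in V_{0}^{I_{1}}$; but you do not prove it, explicitly describing it as an unresolved ``main technical obstacle.'' As a proof of the proposition your proposal therefore has a genuine gap at this point. It is worth noting, however, that your caution is warranted: the paper's entire justification is the one sentence ``Comme $\omega$ normalise $I_{1}$, le sous-espace $V_{0}^{I_{1}}$ de $<B^{+}V_{0}>$ est contenu dans $\omega<B^{+}V_{0}>$,'' and it is not clear how normalization of $I_{1}$ by $\omega$ alone yields the inclusion. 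In fact, if $V=\ind_{KZ}^{G}(W_{0})$ and $V_{0}=[1,W_{0}]$ then $<B^{+}V_{0}>$ and $\omega<B^{+}V_{0}>$ are supported on the disjoint sets $B^{+}K$ and $\omega B^{+}K$, so $\Delta_{V}(V_{0})=0$, while $V_{0}^{I_{1}}\cong W_{0}^{I_{1}}$ can be nonzero; the inclusion in the irreducible case is Hu's Th.~\ref{Hu}~b) and is non-trivial there. So this last inclusion seems to require either a real argument or a further hypothesis on $V$; you were right not to wave it through, but you also did not close the gap.
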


\begin{proof} Evident avec le lemme  \ref{base}.  Comme $\w$ normalise $I_{1}$, le sous-espace $V_{0}^{I_{1}}$ de $< B^{+}V_{0}>$ est contenu dans  $\w < B^{+}V_{0}>$.
 
  \end{proof}

 \begin{proposition} \label{4'} Soit $V\in {\cal M}_{o-tor}(G)$   et soit $V_{0}$  une sous-repr\'esentation de $V|_{KZ}$ g\'en\'eratrice. Alors $\Delta_{V} (V_{0})\neq 0$.

 \end{proposition}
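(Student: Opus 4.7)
Le plan est d'invoquer la proposition \ref{4}, qui fournit d\'ej\`a l'inclusion $V_{0}^{I_{1}} \subset \Delta_{V}(V_{0})$. Il suffit donc de montrer que $V_{0}^{I_{1}} \neq 0$, en supposant bien entendu $V \neq 0$ (auquel cas $V_{0} \neq 0$ puisque $V_{0}$ engendre $V$).

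L'ingr\'edient cl\'e est le fait classique : toute repr\'esentation lisse non nulle d'un groupe pro-$p$ sur un $o$-module de torsion (avec $o$ de caract\'eristique r\'esiduelle $p$) admet des vecteurs invariants non nuls. Dans notre situation, $V \in {\cal M}_{o-tor}(G)$, le corps r\'esiduel $k$ de $o$ est de cardinal une puissance de $p$, et $I_{1}$ est pro-$p$ (comme pro-$p$-sous-groupe de Sylow de l'Iwahori).

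Pour justifier ce fait, je prendrais $v \in V_{0}$ non nul ; par lissit\'e, l'orbite $I_{1}\cdot v$ est finie, et le sous-$o$-module $W \subset V_{0}$ qu'elle engendre est fini et $I_{1}$-stable. Son cardinal est une puissance de $p$ (c'est un module sur un quotient $o/p_{L}^{n}o$ dont le corps r\'esiduel est d'ordre une puissance de $p$). Les orbites de $I_{1}$ sur $W$ ont cardinal une puissance de $p$ (indices de stabilisateurs ouverts dans un pro-$p$-groupe), donc l'\'equation aux classes donne
$$|W^{I_{1}}| \equiv |W| \equiv 0 \pmod{p}.$$
Comme $0 \in W^{I_{1}}$, il vient $|W^{I_{1}}| \geq p$, ce qui fournit un vecteur invariant non nul dans $V_{0}^{I_{1}}$.

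Il n'y a pas vraiment d'obstacle principal : le r\'esultat d\'ecoule imm\'ediatement de la proposition \ref{4} combin\'ee au fait classique sur les pro-$p$-groupes ci-dessus.
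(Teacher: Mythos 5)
Your proof is correct and takes essentially the same route as the paper: apply Proposition \ref{4} to reduce to showing $V_{0}^{I_{1}}\neq 0$, then invoke the standard fixed-point property of a pro-$p$-group acting smoothly on a nonzero $p$-torsion module. The only cosmetic difference is that the paper first passes to the $p_{L}$-torsion vector $w:=p_{L}^{n-1}v$ so as to work with a $k$-representation of $I_{1}$ and quote the fact directly, whereas you spell out the class-equation argument on the finite $o$-submodule $W$ without this preliminary reduction.
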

 \begin{proof}  Par la proposition  \ref{4}, il suffit de voir que si $V_{0}\in {\cal M}_{o-tor}(K)$  est non nul, alors $V_{0}^{p_{L}=0, I_{1}}\neq 0$.Soit $v\in V_{0}$ non nul, et soit $n \in \bf N$ minimal tel que $p_{L}^{n}v=0$. On a $n\neq 0$ donc $V_{0}$ contient un \'el\'ement non nul $w:=p_{L}^{n-1}v$ annul\'e par $p_{L}$. 
 La sous-repr\'esentation de $V_{0}|_{I_{1}}$ engendr\'ee par $w$  est non nulle et appartient \`a ${\cal M}_{k}(I_{1})$; elle a un vecteur non nul fixe par $I_{1}$ qui est un pro-$p$-groupe.
    \end{proof}

 Soit $V\in {\cal M}_{o}(G)$ et soit $V_{0}\in  {\cal M}_{o}(KZ)$ avec $V_{0}\subset V|_{KZ}$ g\'en\'eratrice. Suivant Hu \cite{Hu }Lemme 3.5,  $ \Delta_{V}(V_{0})$ est le premier terme (pour $n=0$) d'une suite  $( \Delta_{V}^{(n)}(V_{0}))_{n\in \bf N}$ de repr\'esentations de $IZ$ contenues dans  $<B^{+}V_{0}>|_{IZ}$, d\'efinies par 
r\'ecurrence
$$ \Delta_{V}^{(n)}(V_{0}):=<B^{+}V_{0}> \  \ \cap  \ <K \w \Delta_{V}^{(n-1)}(V_{0})> $$
pour $n\geq 1$.
Comme $<B^{+}V_{0}>$ est stable par $IZ$, les $ \Delta_{V}^{(n)}(V_{0})$ sont stables par $IZ$.

\begin{lemma} \label{suite2}  Soit $V\in {\cal M}_{o}(G)$ et soit $V_{0}\in  {\cal M}_{o}(KZ)$ avec $V_{0}\subset V|_{KZ}$ g\'en\'eratrice.  La suite $( \Delta_{V}(V_{0})^{(n)})_{n\in \bf N}$ est croissante, et
$$ \Delta_{V}^{(n)}(V_{0})=\Delta_{V}(V_{0}) + \sum_{u\in U(q)} ut  \Delta_{V}^{(n-1)}(V_{0}) \ . $$
 L'union des  $( \Delta_{V}(V_{0})^{(n)})_{n\in \bf N}$ est \'egale \`a $<B^{+}V_{0}>$ si  $ \Delta_{V}(V_{0})$ engendre $V$.
\end{lemma}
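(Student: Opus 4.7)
The plan is to establish the recursive formula first, from which the increasing property follows immediately, and then use the formula to derive the union identity via a Bruhat-type decomposition argument.

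For the \emph{recursive formula}, I rely on the Iwahori decomposition $K = I \sqcup \bigsqcup_{u \in U(q)} usI$ together with the identity $s\w = s(st) = t$. By induction using Proposition~\ref{4}, $\Delta_V^{(n-1)}(V_0)$ is $IZ$-stable; since $\w$ normalizes $I$, this gives
$$<K\w\Delta_V^{(n-1)}(V_0)> \ = \ \w\Delta_V^{(n-1)}(V_0) \ + \ \sum_{u \in U(q)} ut\Delta_V^{(n-1)}(V_0).$$
Intersecting with $<B^+V_0>$: the second summand already lies in $<B^+V_0>$ (since $ut \in B^+$), while $\w\Delta_V^{(n-1)}(V_0) \cap <B^+V_0> \subset \w<B^+V_0> \cap <B^+V_0> = \Delta_V(V_0)$, yielding the $\subset$ direction. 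The reverse inclusion hinges on the identity $\w\Delta_V(V_0) = \Delta_V(V_0)$: the inclusion $\w\Delta_V(V_0) \subset \Delta_V(V_0)$ follows from $\w\Delta_V(V_0) \subset \w<B^+V_0> \cap \w^2<B^+V_0>$ together with $\w^2 = p_F\id_2 \in Z$, and equality from $\w^2\Delta_V(V_0) = \Delta_V(V_0)$ (as $p_F\id_2$ acts invertibly via $Z \subset G$). Then $\Delta_V(V_0) = \w\Delta_V(V_0) \subset \w\Delta_V^{(n-1)}(V_0) \subset <K\w\Delta_V^{(n-1)}(V_0)>$, while $\sum_u ut\Delta_V^{(n-1)}(V_0) \subset <K\w\Delta_V^{(n-1)}(V_0)>$ is trivial. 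The increasing property follows immediately by induction from the formula.

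For the \emph{union identity}, set $D := \bigcup_n \Delta_V^{(n)}(V_0) \subset <B^+V_0>$. The formula gives $utD \subset D$ for $u \in U(q)$, which combined with $IZ$-stability makes $D$ a $B^+$-stable submodule. The next step is $<K\Delta_V(V_0)> \subset D$: from $s = t\w^{-1}$ (consequence of $\w = st$) and $\w\Delta_V(V_0) = \Delta_V(V_0)$ we get $s\Delta_V(V_0) \subset t\Delta_V(V_0)$, hence $us\Delta_V(V_0) \subset ut\Delta_V(V_0) \subset \Delta_V^{(1)}(V_0) \subset D$, and the Iwahori decomposition of $K$ concludes. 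Combining the hypothesis $V = <G\Delta_V(V_0)>$ with the decomposition $G = B^+K \sqcup \w B^+K$ of Lemma~\ref{base}, one obtains $V \subset B^+<K\Delta_V(V_0)> + \w B^+<K\Delta_V(V_0)> \subset D + \w D$.

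Finally, for any $v_0 \in V_0 \subset V = D + \w D$, write $v_0 = d + \w d'$ with $d, d' \in D$. Since $v_0, d \in <B^+V_0>$ we have $\w d' \in <B^+V_0>$, while by Lemma~\ref{base} $\w d' \in \w D \subset \w<B^+V_0> = <(B-B^+)V_0>$, so $\w d' \in <B^+V_0> \cap <(B-B^+)V_0> = \Delta_V(V_0) \subset D$ and hence $v_0 \in D$. Thus $V_0 \subset D$, and $B^+$-stability yields $<B^+V_0> \subset D$. The main obstacle is this final step: the decomposition $V = D + \w D$ requires the full generating hypothesis together with Lemma~\ref{base}, and the cancellation $\w d' \in \Delta_V(V_0)$ is the key observation that folds $v_0$ back into the $B^+$-stable subspace $D$.
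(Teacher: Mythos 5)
Your proof of the recursive formula is essentially the same as the paper's: the Iwahori decomposition $K = I \sqcup \bigsqcup_{u\in U(q)} usI$ combined with $s\w = t$ and the $\w$-stability of $\Delta_V(V_0)$, then intersecting with $<B^+V_0>$ (where the key point, which you and the paper both use implicitly, is that the sum $\sum_u ut\Delta_V^{(n-1)}(V_0)$ already lives in $<B^+V_0>$, so the intersection distributes).

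For the union identity, you take a genuinely different and rather slicker route. The paper introduces the ambient increasing sequence $W_n := <K\w\Delta_V^{(n)}(V_0)>$ inside $V$, proves $W_n = \w\Delta_V^{(n-1)}(V_0) + \Delta_V^{(n)}(V_0)$, shows that $\bigcup_n W_n$ is stable under $K$ and $\w$ (hence under $G$, since these generate), identifies it with the $G$-subrepresentation $V'$ generated by $\Delta_V(V_0)$, and then intersects with $<B^+V_0>$ to get $\bigcup_n \Delta_V^{(n)}(V_0) = V' \cap <B^+V_0>$. You instead stay inside $<B^+V_0>$ throughout: $D := \bigcup_n \Delta_V^{(n)}(V_0)$ is $B^+$-stable and contains $<K\Delta_V(V_0)>$ (via $s\Delta_V(V_0) = t\Delta_V(V_0)$), the decomposition $G = B^+K \sqcup \w B^+K$ of Lemma~\ref{base} gives $V = D + \w D$, and the cancellation observation --- that for $v_0 \in V_0$ the component $\w d'$ is forced into $<B^+V_0> \cap <(B-B^+)V_0> = \Delta_V(V_0) \subset D$ --- yields $V_0 \subset D$ and hence $<B^+V_0> \subset D$. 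Your argument is somewhat more self-contained (no need to identify the ambient union with a $G$-subrepresentation), whereas the paper's version establishes the more general identity $\bigcup_n \Delta_V^{(n)}(V_0) = V' \cap <B^+V_0>$ valid even when $\Delta_V(V_0)$ does not generate $V$.
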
 

\begin{proof} 

a) Un syst\`eme de repr\'esentants de  $K/I$ est $\{1,U(q)s\}$, donc un  syst\`eme de repr\'esentants de $Kw/I$ est  $\{\w,  U(q)t\}$. Pour toute sous-repr\'esentation $M$ de $V|_{I}$,  on a
 $$<K \w M> =\w M +  \sum_{u\in U(q)} ut M \ . $$
 Prenant $M=  \Delta_{V}(V_{0})$ qui est stable par $\w$ on obtient 
 $$<K \w \Delta_{V}(V_{0})>  \ = \   \Delta_{V}(V_{0})+  \sum_{u\in U(q)} ut  \Delta_{V}(V_{0}) \ . $$
On a donc $ \Delta_{V}^{(1)}(V_{0}) = <K \w \Delta_{V}(V_{0})>$  car cette repr\'esentation est   contenue dans $<B^{+} V_{0}>$. 
On voit que $\Delta_{V}(V_{0})\subset  \Delta_{V}^{(1)}(V_{0})$, puis 
par induction sur $n$, que 
 $   \Delta_{V}^{(n-1)}(V_{0})\subset    \Delta_{V}^{(n)}(V_{0})$ pour tout $n\geq 1$.
 Lorsque $M$ est contenu dans $<B^{+}V_{0}>$, on voit que 
 $$<B^{+}V_{0}>\cap <K \w M> \ =\ \w (<\w B^{+}V_{0}>\ \cap \  M)\ +   \  \sum_{u\in U(q)} ut M \ . $$
 Si $\Delta_{V}(V_{0}) \subset M$ le premier terme du membre de droite est $\w \Delta_{V}(V_{0})= \Delta_{V}(V_{0})$.
 Prenant $M=   \Delta_{V}^{(n-1)}(V_{0}) $  on obtient 
$$ \Delta_{V}^{(n)}(V_{0})=\Delta_{V}(V_{0}) + \sum_{u\in U(q)} ut  \Delta_{V}^{(n-1)}(V_{0}) \  $$
ce qui entraine
  $$<K \w \Delta_{V}^{(n-1)}(V_{0})>  \ = \ \w \Delta_{V}^{(n-1)}(V_{0}) + \Delta_{V}^{(n)} (V_{0}) \  . $$
Donc la filtration croissante
$$( <K\w \Delta_{V}^{(n)}(V_{0}) >)_{n\in {\bf N}}   $$
est  contenue dans la sous-repr\'esentation $V'$ de $V$ engendr\'ee par    $\Delta_{V}(V_{0})$, et sa limite   est stable  par $K$ et $\w$. Comme $G$ est engendr\'e par $K$ et $\w$,  cette limite est  $V'$. L'union des intersections avec  $<B^{+}V_{0}>$ est $V' \ \cap  \ <B^{+}V_{0}>$.  L'union des  $( \Delta_{V}(V_{0})^{(n)})_{n\in \bf N}$ est donc  \'egale \`a $V' \ \cap  \ <B^{+}V_{0}>$.

\end{proof}

\begin{remark} Si $V_{0}\in {\cal M}_{o}(KZ)$ est engendr\'ee par  $V_{0}^{I_{1}}$, alors  $V$ est engendr\'e par $\Delta_{V}(V_{0})$  qui  contient  $V_{0}^{I_{1}}$ (lemme \ref{4}). C'est toujours le cas si $V_{0}$ est irr\'eductible.
\end{remark}

  \begin{lemma}  \label{6}  Soit  $V\in {\cal M}_{o}(G)$  de type fini (resp. de pr\'esentation finie) et soit $M\in  {\cal M}_{o}(B^{+})$ de type fini contenu dans $V|_{B^{+}}$. Alors il existe une presentation (resp. finie)
  $\ind_{KZ}^{G}(V_{0})\to V$ de $V$ avec $V_{0}\subset V|_{KZ}$ de type fini,  $\Delta_{V} (V_{0})$ engendre $V$ et $M\ \subset \ <B^{+}V_{0}>$.

\end{lemma}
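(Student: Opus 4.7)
On construit $V_{0}$ par deux \'elargissements successifs. Partant d'une pr\'esentation $\ind_{KZ}^{G}(V_{0}^{(0)})\to V$ (finie dans le cas de pr\'esentation finie) avec $V_{0}^{(0)}\subset V|_{KZ}$ de type fini sur $o[KZ]$, on pose
$$V_{0}^{*}\ :=\ V_{0}^{(0)}\ +\ \sum_{i=1}^{k} o[KZ]\,m_{i}\ ,$$
o\`u $m_{1},\ldots,m_{k}$ engendrent $M$ comme $B^{+}$-module. Chaque $o[KZ]m_{i}$ \'etant cyclique, $V_{0}^{*}$ est de type fini sur $o[KZ]$; l'analogue de la proposition \ref{fp} pour le couple $(G,KZ)$, appliqu\'e en relevant les $m_{i}$ dans $\ind_{KZ}^{G}(V_{0}^{(0)})$, assure la finitude de la pr\'esentation $\ind_{KZ}^{G}(V_{0}^{*})\to V$. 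On pose ensuite
$$V_{0}\ :=\ V_{0}^{*}\ +\ o[K]\,\w V_{0}^{*}\ ,$$
qui contient d\'ej\`a $\w V_{0}^{*}$.

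La finitude de $o[K]\w V_{0}^{*}$ sur $o[KZ]$, seul point technique r\'eel, d\'ecoule de la d\'ecomposition finie $K\w K=\bigsqcup_{j=1}^{q+1}Kg_{j}$ (on a $K\w K=KtK$ puisque $s\in K$, et $[K:I]=q+1$). Si $v_{1},\ldots,v_{n}$ engendrent $V_{0}^{*}$ sur $o[KZ]$, alors pour $k_{1},k_{2}\in K$ et $z\in Z$, on \'ecrit $k_{2}\w k_{1}=kg_{j}$ avec $k\in K$, et comme $z$ est central, $k_{2}\w k_{1}(zv_{i})=k z g_{j}v_{i}\in o[KZ]g_{j}v_{i}$; d'o\`u
$$o[K]\w V_{0}^{*}\ =\ \sum_{i,j}o[KZ]g_{j}v_{i}\ ,$$
engendr\'e par $(q+1)n$ \'el\'ements. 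En relevant ces \'el\'ements dans $\ind_{KZ}^{G}(V_{0}^{*})$ et en appliquant derechef l'analogue de la proposition \ref{fp}, la pr\'esentation $\ind_{KZ}^{G}(V_{0})\to V$ reste finie.

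Les autres propri\'et\'es sont formelles. Comme $V_{0}\supset V_{0}^{(0)}$, $V_{0}$ engendre $V$ sur $G$; comme $V_{0}\supset V_{0}^{*}\supset \{m_{i}\}$ et $M=\langle B^{+}m_{i}\rangle$, on a $M\subset\langle B^{+}V_{0}\rangle$. Enfin, $\w V_{0}^{*}\subset V_{0}\subset\langle B^{+}V_{0}\rangle$; et $V_{0}^{*}\subset V_{0}\subset\langle B^{+}V_{0}\rangle$ entra\^ine, via la proposition \ref{4}, que $\w V_{0}^{*}\subset \w\langle B^{+}V_{0}\rangle=\langle(B-B^{+})V_{0}\rangle$. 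Donc $\w V_{0}^{*}\subset\Delta_{V}(V_{0})$; comme $V_{0}^{*}$ engendre $V$ sur $G$ et $\w\in G$, l'ensemble $\w V_{0}^{*}$ engendre aussi $V$ sur $G$, et par cons\'equent $\Delta_{V}(V_{0})$ engendre $V$.
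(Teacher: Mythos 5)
Your proof is correct and follows essentially the same strategy as the paper's: enlarge $V_{0}$ so that it contains a finite $B^{+}$-generating set of $M$ and also the $\w$-translate of a generating $o[KZ]$-submodule, which forces $\Delta_{V}(V_{0})=\langle B^{+}V_{0}\rangle\cap\w\langle B^{+}V_{0}\rangle$ to contain a $G$-generating set. The paper does this in one stroke, taking $V_{0}$ to be the image in $V$ of $W_{0}:=\langle K(\w[1,V'_{0}]+[1,V'_{0}])\rangle\subset\ind_{KZ}^{G}(V'_{0})$ with $V'_{0}$ already containing generators of $M$ (via cor.\ \ref{cor1}), whereas you proceed in two explicit enlargements; your coset count in $K\w K$ makes precise the finite generation of $o[K]\w V_{0}^{*}$ over $o[KZ]$, a point the paper leaves implicit.
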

  
   \begin{proof}  Si $V$ est de presentation finie, on choisit, comme on le peut,
    une pr\'esentation finie $\ind_{KZ}^{G}(V'_{0})\to V$
  de $V$ avec $V'_{0}\subset V|_{KZ}$ contenant un syst\`eme fini de g\'en\'erateurs de $M$ (proposition \ref{cor1}). Dans $\ind_{KZ}^{G}(V'_{0})$,
     $$W_{0} \ := \ <K(\w [1,V'_{0}] + [1,V'_{0}]) >$$est un $o[KZ]$-sous-module de type fini; son image $V_{0}$ dans $V$ contient $V'_{0} \cup \w V'_{0}$. Par la proposition \ref{fp}, $$\ind_{KZ}^{G}(V_{0} )\to V$$ est une pr\'esentation finie de $V$, qui v\'erifie les  propri\'et\'es: 
     
    a) $\Delta_{V}(V_{0})$  engendre $V$, car  $\Delta_{V}(V_{0})$  contient
   $V'_{0} $.

b) $M\  \subset \   <B^{+} V_{0}>$.

Si $V$ est simplement de type fini, c'est le m\^eme argument en supprimant ``finie'' dans pr\'esentation finie, et en ajoutant $V_{0}$ de type fini. 
   
 \end{proof}

 \begin{lemma} \label{rel}    Soit $V\in {\cal M}_{o}(G)$ et $V_{0}\in {\cal M}_{o}(KZ)$  avec $V_{0}\subset V|_{KZ}$ et g\'en\'erateur. Alors la relation 
$$\w v_{0}+ \sum_{u\in U(q)} ut v_{u}  = 0$$
 entre $q+1$ \'el\'ements $v_{0}, (v_{u})_{u\in U(q)}$ de 
$<B^{+}V_{0}>$ implique que ces \'el\'ements appartiennent \`a   $\Delta_{V}(V_{0})$.
\end{lemma}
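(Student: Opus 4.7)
The plan is to start from the given relation $\w v_0 + \sum_u ut v_u = 0$ and show, by direct manipulations, that each of the $q+1$ summands lies in $\Delta_V(V_0) = <B^+V_0> \cap \w<B^+V_0>$, using the identification $<(B-B^+)V_0> = \w<B^+V_0>$ from Proposition \ref{4}. I first handle $v_0$: rewriting the relation as $\w v_0 = -\sum_u ut v_u$, the right-hand side lies in $<B^+V_0>$ because each $ut \in B^+$ preserves this submodule. Since $\w^{-1} = p_F^{-1}\w$ and $<B^+V_0>$ is $Z$-stable, this gives $v_0 = \w^{-1}(\w v_0) \in \w^{-1}<B^+V_0> = \w<B^+V_0>$. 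Combined with $v_0 \in <B^+V_0>$, this yields $v_0 \in \Delta_V(V_0)$.

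For each fixed $u \in U(q)$, I apply $(ut)^{-1} \in (B^+)^{-1}$ to the relation in order to isolate $v_u$:
$$v_u = -(ut)^{-1}\w v_0 - \sum_{u' \neq u}(ut)^{-1} u' t \, v_{u'}.$$
It will then suffice to show that every term on the right lies in $<(B-B^+)V_0> = \w<B^+V_0>$. For the first term, $v_0 \in <B^+V_0>$ gives $\w v_0 \in <(B-B^+)V_0>$ by Proposition \ref{4}, and $(ut)^{-1}$ preserves $<(B-B^+)V_0>$ by Lemma \ref{5}. For the remaining terms, I compute $(ut)^{-1}u' t = t^{-1}(u^{-1}u')t$; the Teichm\"uller representatives $u \neq u'$ of $U_0/tU_0 t^{-1} \cong U(q)$ are distinct modulo $tU_0 t^{-1}$, so $u^{-1}u' \in U_0 \setminus tU_0 t^{-1}$ and conjugation by $t^{-1}$ produces a unipotent element $\tilde u \in U(p_F^{-1}O_F) \setminus U_0$.

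The one mildly delicate point is checking that $\tilde u \cdot B^+ \subset B - B^+$. Writing any $b \in B^+$ in the standard form $b_0 z t^n$ and computing $\tilde u b$, the ratio of the $(1,2)$ and $(2,2)$ entries becomes the sum of an element of $O_F$ (contributed by $b_0$) and an element of valuation $-1$ (contributed by $\tilde u$), hence has valuation $-1$; since this ratio lies in $O_F$ for every element of $B^+$, we deduce $\tilde u b \notin B^+$. Consequently $\tilde u \cdot <B^+V_0> \subset <(B-B^+)V_0>$, so $(ut)^{-1}u't\, v_{u'} \in \w<B^+V_0>$ for each $u' \neq u$. Summing, $v_u \in \w<B^+V_0>$; together with $v_u \in <B^+V_0>$ this gives $v_u \in \Delta_V(V_0)$, completing the proof.
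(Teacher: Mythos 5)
Your proof is correct, and it diverges from the paper's in a meaningful way for the second half. The argument for $v_{0}$ is essentially the same as the paper's: both observe that $\w v_{0} = -\sum_{u}ut\,v_{u}$ lands in $<B^{+}V_{0}>$, hence $\w v_{0}\in \Delta_{V}(V_{0})$, and then use $\w\Delta_{V}(V_{0})=\Delta_{V}(V_{0})$ (equivalently, your $\w^{-1}=p_{F}^{-1}\w$ together with $Z$-stability). For the $v_{u}$, however, the paper proceeds by a symmetry argument: it packages the relation as an element $f=[1,\w v_{0}]+\sum_{u}us[1,\w v_{u}]$ in the kernel $R$ of the $K$-equivariant map $\ind_{I}^{K}(\w<B^{+}V_{0}>)\to V$; since $R$ is $K$-stable, translating $f$ by $(us)^{-1}$ produces $f_{u}\in R$ with $f_{u}(1)=\w v_{u}$, so each $v_{u}$ plays the role of $v_{0}$ in a new relation and part a) applies verbatim. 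You instead isolate $v_{u}$ by applying $(ut)^{-1}$ and verify directly that every term on the right lies in $\w<B^{+}V_{0}>=<(B-B^{+})V_{0}>$, using Lemma \ref{5} for the $\w v_{0}$ term and the explicit computation $t^{-1}(u^{-1}u')t\cdot B^{+}\subset B-B^{+}$ (which is correct: the $(1,2)/(2,2)$ ratio of $\tilde{u}b$ acquires valuation $-1$ because $u^{-1}u'\notin tU_{0}t^{-1}$) for the cross terms. Both arguments are valid; the paper's reduction-by-$K$-symmetry is more conceptual and avoids the matrix check, while yours is more elementary and makes transparent exactly which unipotent elements push $B^{+}$ into $B-B^{+}$.
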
 

\begin{proof} Comparer avec  \cite{Hu} Lemme 3.1.  a) Comme $$ <\w B^{+} V_{0}>\cap <U(q)s \w B^{+} V_{0}> \ = \  <\w B^{+} V_{0}>\cap <B^{+}t V_{0}> $$  est contenue dans $\Delta_{V}(V_{0})$, la relation implique $\w v_{0}\in \Delta_{V}(V_{0})$. Comme $\Delta_{V}(V_{0})=\w\Delta_{V}(V_{0})$ on en d\'eduit $v_{0}\in \Delta_{V}(V_{0})$. 

b) Comme $1, U(q)s$ est un syst\`eme de repr\'esentants de $K/I$, tout \'el\'ement $f$ de $ \ind_{I}^{K}(<\w B^{+} V_{0}>)$ s'\'ecrit   $[1,\w v_{0}]+ \sum_{u\in U(q)}us [1,\w v_{u}]$ pour $v_{0}, (v_{u})_{u\in U(q)}$ uniques de 
$<B^{+}V_{0}>$. Clairement $f$ appartient au
 noyau $R$ de l'application naturelle $K$-\'equivariante
$$ \ind_{I}^{K}(<\w B^{+} V_{0}>) \ \to \  V \   $$
si et seulement si la relation du lemme est v\'erifi\'ee.  Comme $R$ est $K$-\'equivariant, si $f\in R$ alors
il existe un \'el\'ement $f_{u}\in R$ tel que $f_{u}(1)=\w v_{u}$ pour tout $u\in U(q)$.
On d\'eduit de a) que $v_{u}\in \Delta_{V}(V_{0})$ pour tout $u\in U(q)$.

\end{proof}

\begin{remark} Notons que  $$V\quad =\quad <K\w B^{+} V_{0}> \ + \ V_{0} \quad  .$$
\end{remark}
\begin{proof} $B^{+}K$ \'etant $I$-stable \`a gauche,
nous avons $$K\w B^{+}K = \w B^{+}K \cup U(q)tB^{+}K=  \w B^{+}K \cup B^{+}t K=G-B_{0}Z \ . 
 $$
 \end{proof}

\bigskip On consid\`ere la fonction
$$\ell_{V_{0}}:\ <B^{+}V_{0}> \ \to \ {\bf N} \cup \infty \ .$$
associ\'ee \`a a filtration croissante $(\Delta_{V}^{(n)}(V_{0}))_{n\in bf N}$. Si $v$ n'appartient \`a aucun 
$\Delta_{V}^{(n)}(V_{0})$ alors $\ell_{V_{0}}(v)=\infty$. Sinon, 
 $\ell_{V_{0}}(v)$  est le plus petit entier $n\in \bf N$ tel que $v\in   \Delta_{V}^{(n)}(V_{0})$. 

\bigskip Pour tout entier  $m\in \bf N$, notons $U_{m}:= U(p_{F}^{m}O_{F})$ et $U^{-}_{m}:= U^{-}(p_{F}^{m}O_{F})$.

 \begin{lemma}\label{Am}  Soit $V\in {\cal M}_{o}(G)$ de pr\'esentation $\ind_{KZ}^{G}(V_{0}) \to V$ avec $V_{0}\subset V|_{KZ}$.  Soit  $v \ \in  \ (\cup_{n\geq 1}\Delta_{V}^{(n)}(V_{0})) \ - \ \Delta_{V}(V_{0})$. Alors  pour tout $m\geq 1$, 
$$\ell_{V_{0}}(A_{m}v) = \ell_{V_{0}}(v)+m   \quad, \quad 
 A_{m} \ := \ \sum_{u\in U_{0}/U_{m}}u t^{m} \ . $$

\end{lemma}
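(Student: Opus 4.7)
The plan is to prove the lemma by induction on $m$, reducing to the case $m=1$ via the operator identity $A_m = A_1 \circ A_{m-1}$, and then to use Lemma~\ref{rel} as the decisive input for the lower bound on $\ell_{V_0}(A_1 v)$.

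First I would verify $A_m = A_1 A_{m-1}$ as operators on $<B^+V_0>$. This follows from the group-theoretic decomposition $U_0/U_m \simeq (U_0/U_1)\cdot t(U_0/U_{m-1})t^{-1}$ together with the conjugation identity $tU_0 t^{-1} = U_1$: for $u \in U(q)$ and $u' \in U_0/U_{m-1}$ one has $(ut)(u't^{m-1}) = u(tu't^{-1})t^m$, and summing over $u,u'$ recovers $A_m$. Granted this, once we know $\ell_{V_0}(A_1 w) = \ell_{V_0}(w)+1$ whenever $\ell_{V_0}(w) \geq 1$, induction on $m$ closes the argument: by the inductive hypothesis $\ell_{V_0}(A_{m-1}v) = \ell_{V_0}(v)+m-1 \geq 1$, so the base case applies to $A_{m-1}v$.

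The upper bound $\ell_{V_0}(A_1 w) \leq \ell_{V_0}(w)+1$ is immediate from the recursive formula $\Delta_V^{(n+1)}(V_0) = \Delta_V(V_0) + \sum_{u\in U(q)} ut\, \Delta_V^{(n)}(V_0)$ given by Lemma~\ref{suite2}. The hard step, and the real content of the lemma, is the matching lower bound: if $\ell_{V_0}(w)=n\geq 1$ then $A_1 w \notin \Delta_V^{(n)}(V_0)$. I would argue by contradiction. Assume $A_1 w = \sum_{u\in U(q)} utw \in \Delta_V^{(n)}(V_0)$; via Lemma~\ref{suite2} decompose it as $w_0 + \sum_u ut w_u$ with $w_0 \in \Delta_V(V_0)$ and $w_u \in \Delta_V^{(n-1)}(V_0)$, and rearrange to $w_0 = \sum_u ut(w - w_u)$. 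Since $w_0 \in \Delta_V(V_0) \subset \omega <B^+V_0>$, write $w_0 = \omega y$ for some $y \in <B^+V_0>$. The resulting relation $\omega(-y) + \sum_{u\in U(q)} ut(w-w_u) = 0$ is precisely of the type handled by Lemma~\ref{rel}, so each $w-w_u$ lies in $\Delta_V(V_0) \subset \Delta_V^{(n-1)}(V_0)$, forcing $w \in \Delta_V^{(n-1)}(V_0)$ and contradicting the minimality of $n$.

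The main obstacle is thus the lower bound at $m=1$; the multiplicativity reduction and the upper bound are essentially formal consequences of Lemma~\ref{suite2}. Once the base case is established, induction gives $\ell_{V_0}(A_m v) = \ell_{V_0}(A_1(A_{m-1}v)) = \ell_{V_0}(A_{m-1}v)+1 = \ell_{V_0}(v)+m$, as required.
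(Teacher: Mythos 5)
Your proof is correct and follows essentially the same route as the paper: the key step is the $m=1$ lower bound, established by assuming $A_1 v\in\Delta_V^{(n)}(V_0)$, decomposing via Lemma~\ref{suite2}, rearranging into a relation of the form $\omega(\cdot)+\sum_{u\in U(q)}ut(\cdot)=0$, and invoking Lemma~\ref{rel} to force $v\in\Delta_V^{(n-1)}(V_0)$, a contradiction; the case $m>1$ then follows from $A_m=A_1^m$. You fill in the verification of $A_m = A_1 A_{m-1}$ and the induction bookkeeping, which the paper merely asserts, but the argument is otherwise the same.
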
 

\begin{proof} Comparer avec\cite{Hu} lemme 3.8.  

a) Le cas $m=1$.  Posons   $\ell_{V_{0}}(v)=n \geq 1$. Par  le lemme \ref{suite2}, $A _{1}v\in  \Delta_{V}^{(n+1)}(V_{0})$. Supposons  $A_{1}v \in \Delta_{V}^{(n)}(V_{0})$. 
On \'ecrit
$$A_{1}v = \w v_{0} + \sum_{u\in U(q)}ut v_{u}$$
avec des \'el\'ements $v_{0}\in   \Delta_{V}(V_{0}) =\w \Delta_{V}(V_{0})$ et $( v_{u})_{u\in U(q)}$ dans $ \Delta_{V}^{(n-1)}(V_{0})$.
Les \'el\'ements $v,v_{0},( v_{u})_{u\in U(q)}$ sont li\'es par la relation
$$0= \w v_{0} +   \sum_{u\in U(q)}ut( v_{u}-v) \ . $$
Par le lemme \ref{rel},  les \'el\'ements $( v_{u}-v)_{u\in U(q)}$ appartiennent \`a $\Delta_{V}(V_{0})$. Donc $v\in \Delta_{V}^{(n-1)}(V_{0})$ ce qui contredit l'hypoth\`ese $\ell_{V_{0}}(v)=n$.

b) Le cas  $m>1$.  Le $m$-i\`eme it\'er\'e $A_{1}\circ \ldots \circ A_{1}$ de $A_{1}$ est \'egal \`a $A_{m}$.
\end{proof}

\begin{lemma} Soient $a\in \bf N$  non nul et soit $g\in U_{a}^{-}$. Alors 
l'application 
 $$u \mapsto u_{g} \quad : \quad U_{0} \to U_{0} \quad , \quad gu \in  u_{g} T_{0} U_{a}^{-}$$
est bien d\'efinie,  et induit par passage au quotient une bijection de $U_{0}/U_{b}$ pour tout $b\in \bf N$.
\end{lemma}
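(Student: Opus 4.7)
The plan is to do the calculation explicitly in matrix coordinates. Write
\[
g = \begin{pmatrix} 1 & 0 \\ c & 1 \end{pmatrix}, \quad c \in p_{F}^{a}O_{F}, \qquad u = \begin{pmatrix} 1 & x \\ 0 & 1 \end{pmatrix}, \quad x \in O_{F}.
\]
A direct product gives
\[
gu = \begin{pmatrix} 1 & x \\ c & 1+cx \end{pmatrix}.
\]
Since $a \geq 1$, one has $cx \in p_{F}^{a}O_{F}$, hence $1+cx \in 1 + p_{F}^{a}O_{F} \subset O_{F}^{*}$. This is the key observation that makes everything work.

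Next, to exhibit the decomposition $gu = u_{g}t_{g}g'$ with $u_{g}\in U_{0}$, $t_{g}\in T_{0}$, $g'\in U_{a}^{-}$, I would set $\beta := 1+cx$ and look for $y\in O_{F}$, $\alpha\in O_{F}^{*}$, $c'\in p_{F}^{a}O_{F}$ such that
\[
\begin{pmatrix} 1 & y \\ 0 & 1 \end{pmatrix} \begin{pmatrix} \alpha & 0 \\ 0 & \beta \end{pmatrix} \begin{pmatrix} 1 & 0 \\ c' & 1 \end{pmatrix} \ = \ gu.
\]
Matching entries forces (uniquely) $\beta = 1+cx$, $c' = c/(1+cx) \in p_{F}^{a}O_{F}$, $y = x/(1+cx) \in O_{F}$, and $\alpha = 1/(1+cx) \in O_{F}^{*}$. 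Existence and uniqueness of the factorization both fall out of this system; in particular $u_{g}$ is well defined and the map $x\mapsto y := x/(1+cx)$ describes it in coordinates.

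For the last assertion, I would compare two choices $x_{1},x_{2}\in O_{F}$: a short calculation gives
\[
y_{1}-y_{2} \ =\ \frac{x_{1}-x_{2}}{(1+cx_{1})(1+cx_{2})}.
\]
Since the denominator lies in $O_{F}^{*}$, one has $y_{1}\equiv y_{2}\pmod{p_{F}^{b}}$ if and only if $x_{1}\equiv x_{2}\pmod{p_{F}^{b}}$ for every $b\in \mathbf{N}$. This simultaneously shows that $u\mapsto u_{g}$ passes to $U_{0}/U_{b}$ and that the induced map is injective. Surjectivity on $U_{0}/U_{b}$ follows either by the same argument applied to $g^{-1}\in U_{a}^{-}$ (which gives an inverse map $y \mapsto y/(1-cy)$, well defined since $1-cy \in 1+p_{F}^{a}O_{F}\subset O_{F}^{*}$), or simply by finiteness of $U_{0}/U_{b}$. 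There is no genuine obstacle here; the only thing to watch is to use $a\geq 1$ at the start to ensure $1+cx$ is a unit, which is what makes all subsequent divisions legal.
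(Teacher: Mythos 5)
Your proof is correct and takes a more explicitly computational route than the paper's. The paper argues structurally: it introduces the group $C_{a,b}$ generated by $U_a^-$ and $U_b$, observes that it has an Iwahori decomposition $C_{a,b}=U_a^-(T_0\cap C_{a,b})U_b = U_b(T_0\cap C_{a,b})U_a^-$, uses uniqueness in $C_{a,0}$ to make $u_g$ well defined, gets $u\in U_b\Rightarrow u_g\in U_b$ from $gu\in C_{a,b}$, and then proves the converse direction (hence injectivity on the quotient) via a short explicit matrix identity extracting the equality $y\,x_b = y_0$. You instead work entirely in coordinates from the outset: the formula $u_g = x/(1+cx)$ (with $1+cx\in O_F^*$ because $a\geq 1$), and the identity $y_1-y_2=(x_1-x_2)/((1+cx_1)(1+cx_2))$, give well-definedness, descent to the quotient, and injectivity all at once, with surjectivity either from finiteness of $U_0/U_b$ or from the explicit inverse $y\mapsto y/(1-cy)$. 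The paper's approach is a little more abstract and would adapt more readily to higher-rank groups where explicit entrywise formulas become unwieldy; yours is self-contained, shorter for $GL(2)$, and has the advantage of producing a closed formula for $u_g$. Both are fine; for the case at hand your argument is if anything cleaner.
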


\begin{proof} Le
 groupe $C_{a,b} := <U_{a}^{-}, U_{b}>$ engendr\'e par $U_{a}^{-}$ et $U_{b}$ a une  d\'ecomposition d'Iwahori  (c'est facile \`a v\'erifier) : 
$$C_{a,b}\quad = \quad U_{a}^{-}\ (T_{0}\cap C_{a,b})\ U_{b}\quad = \quad U_{b}\ (T_{0}\cap C_{a,b})\ U_{a}^{-} \quad .$$
Pour $g\in U^{-}_{a}$ et 
$u\in U_{0} $, l'\'el\'ement $gu\in C_{a,0}$ s'\'ecrit uniquement $gu = u_{g}hu'$ avec $u_{g}\in U_{0}, \ h\in T_{0} ,\ u'\in U^{-}_{a}$. Donc l'application $u\mapsto u_{g}$ est bien d\'efinie. Si $u\in U_{b}$
l'\'el\'ement $gu$ appartient \`a $ C_{a,b}$, alors $u_{g}\in U_{b}$. Inversement si $u_{g}\in U_{b}$
alors $gu \in U_{b}T_{0} U^{-}_{a}$. L'\'egalit\'e dans $M(2,O_{F})$
$$\pmatrix {1&0 \cr y_{a }&1} \pmatrix {1&y_{0} \cr 0&1}  \quad = \quad \pmatrix {1&x_{b} \cr 0&1} 
\pmatrix {x &0 \cr 0&y} \pmatrix {1&0 \cr x_{a }&1}$$
implique $y x_{b}=y_{0}$. Si $y\in O_{F}^{*},  x_{b}\in p_{F}^{b}O_{F},$ alors  $y_{0}\in p_{F}^{b}O_{F}$. Donc  $u_{g}\in U_{b}$  implique $u\in U_{b}$.
\end{proof}

  \begin{proposition} \label{5} Soit  $V\in {\cal M}_{o}(G)$ de type fini et soit
 $V_{0}\in {\cal M}_{o}(KZ)$ de type fini  contenue dans $V|_{KZ}$ g\'en\'erateur et telle que
 $\Delta_{V}(V_{0})$ engendre $V$. Alors  tout $v\in <B^{+} V_{0}>^{U_{0}} $ engendrant 
 une sous-repr\'esentation admissible $V(v)$ de $V$, appartient \`a $ \Delta_{V}(V_{0}).$

 \end{proposition}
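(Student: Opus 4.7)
The plan is to argue by contradiction: suppose $v \in \langle B^+ V_0\rangle^{U_0}$ does not lie in $\Delta_V(V_0)$; I will build an infinite strictly increasing chain of $o$-submodules of $V(v)^{U_0}$, contradicting the admissibility of $V(v)$.

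The first step is to use the hypothesis that $\Delta_V(V_0)$ engendre $V$: by Lemma \ref{suite2}, this forces
$$\langle B^+ V_0\rangle \ =\ \bigcup_{n\in\N}\Delta_V^{(n)}(V_0),$$
so the level $\ell_{V_0}(v)$ is a finite non-negative integer. Since $\Delta_V^{(0)}(V_0) = \Delta_V(V_0)$, the standing assumption $v \notin \Delta_V(V_0)$ places $v$ in $\bigcup_{n\geq 1}\Delta_V^{(n)}(V_0) \setminus \Delta_V(V_0)$; Lemma \ref{Am} therefore applies and yields $\ell_{V_0}(A_m v) = \ell_{V_0}(v) + m$ for every $m\geq 1$.

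The second step is to check that $A_m v$ sits inside $V(v)^{U_0}$ for every $m$. Because $v$ is fixed by $U_0$ and $t^m U_0 t^{-m} = U_m$, the vector $t^m v$ is $U_m$-invariant, so the sum $A_m v = \sum_{u\in U_0/U_m} u t^m v$ is independent of the choice of coset representatives and is itself $U_0$-invariant by the standard averaging argument. Moreover $A_m v$ lies in $V(v)$ because $V(v)$ is $G$-stable.

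Putting these together, the $o$-submodules
$$M_m \ :=\ \Delta_V^{(\ell_{V_0}(v)+m)}(V_0) \cap V(v)^{U_0}, \qquad m\geq 0,$$
form a strictly increasing chain of $o$-submodules of $V(v)^{U_0}$, since the level computation puts $A_m v$ in $M_m \setminus M_{m-1}$ for every $m\geq 1$. By admissibility, $V(v)^{U_0}$ is a finitely generated $o$-module, hence a noetherian $o$-module (since $o$ is a DVR), so no such infinite strictly increasing chain exists; contradiction. The main obstacle has in fact already been overcome by Lemma \ref{Am}; the only delicate points here are justifying that Lemma \ref{Am} is applicable (which is where the hypothesis that $\Delta_V(V_0)$ generates $V$ enters, via Lemma \ref{suite2}) and the verification that $A_m v$ remains $U_0$-invariant, for which the key identity is $t^m U_0 t^{-m} = U_m$.
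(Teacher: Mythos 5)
There is a genuine gap at the final step. You assert that admissibility of $V(v)$ forces $V(v)^{U_{0}}$ to be a finitely generated $o$-module, but $U_{0}=U(O_{F})$ is a compact subgroup of $G$ that is \emph{not open} (it is $1$-dimensional inside the $4$-dimensional group $G$). Admissibility only controls $V(v)^{H}$ for compact \emph{open} $H$, so nothing prevents $V(v)^{U_{0}}$ from being infinitely generated, and your noetherian chain argument does not get off the ground. Indeed, if $U_{0}$-invariance alone were enough, there would be no reason for the paper to prove the auxiliary lemma about the map $u\mapsto u_{g}$ for $g\in U_{a}^{-}$ that immediately precedes Proposition \ref{5}.

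This is precisely the point the paper's proof addresses: it picks $a\geq 1$ so that the compact \emph{open} subgroup $C_{a,0}=\langle U_{a}^{-},U_{0}\rangle$ (with its Iwahori decomposition $U_{a}^{-}(T_{0}\cap C_{a,0})U_{0}$) fixes $v$, which is possible by smoothness, and then shows --- using the preceding lemma on $u\mapsto u_{g}$ to handle $g\in U_{a}^{-}$, together with the easy $U_{0}$-computation you already gave --- that $C_{a,0}$ also fixes every $A_{m}v$. Only then does admissibility apply, to the module $V(v)^{C_{a,0}}$, and the level computation from Lemma \ref{Am} yields the contradiction. So your Step 4 needs to be strengthened from $U_{0}$-invariance to $C_{a,0}$-invariance before the finiteness/noetherianity argument can be used; the rest of your reduction (finiteness of $\ell_{V_{0}}(v)$ via Lemma \ref{suite2}, applicability of Lemma \ref{Am}, well-definedness of $A_{m}v$) is correct and matches the paper.
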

\begin{proof} Voir \cite{Hu} Proposition 4.4 (ii). 

Soit $m$ un entier $\geq 1$.  Gardons les notations du lemme pr\'ec\'edent et de sa preuve. On choisit, comme on le peut, un entier $a\geq 1$ tel que  le groupe $C_{a,0}$ fixe $v$.
Montrons que $C_{a,0}$ fixe aussi $A_{m}v$.
 Pour $g\in U_{0}$, la multiplication \`a gauche par $g$ dans $U_{0}$ induit une bijection de $U_{0}/U_{m}$, donc 
 $gA_{m}v= g \sum_{u\in U_{0}/U_{m}}u t^{m} v = A_{m}v$.
Pour $g \in U_{a}^{-}$,  on a 
$$gA_{m}v= \sum_{u\in U_{0}/U_{m}}u_{g} t^{m} v \quad = \quad A_{m}v \quad ,$$
par le lemme pr\'ec\'edent.
Le $o$-module de type fini $V(v)^{C_{a,0}}$ contient $A_{m}v$ pour tout entier naturel non nul $m$.
Les \'el\'ements $(A_{m}v)_{m\geq m_{v}}$ ne peuvent pas \^etre lin\'eairement ind\'ependants sur $o$.
Le lemme \ref{Am}  implique $v\in  \Delta_{V}(V_{0}).$
\end{proof}

 \begin{theorem} \label{aut} Soit  $V\in {\cal M}_{o-tor}(G)$ admissible de pr\'esentation finie. Alors 
 $M^{U_{0}} $ est fini, pour tout $M\in {\cal M}_{o-tor}(B^{+})$ de type fini contenu dans $V|_{B^{+}}$.
 \end{theorem}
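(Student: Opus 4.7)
Mon plan consiste \`a encha\^{\i}ner le lemme \ref{6}, la proposition \ref{5} et la proposition \ref{1} b) pour ramener $M^{U_{0}}$ \`a $\Delta_{V}(V_{0})$, puis \`a constater que ce dernier est fini gr\^ace au lemme \ref{le3} 2).

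Premi\`erement, j'applique le lemme \ref{6} \`a la repr\'esentation $V\in {\cal M}_{o-tor}(G)$ de pr\'esentation finie et au sous-module $M\in {\cal M}_{o}(B^{+})$ de type fini. Ceci fournit une pr\'esentation finie $\ind_{KZ}^{G}(V_{0})\to V$ avec $V_{0}\subset V|_{KZ}$ de type fini, $\Delta_{V}(V_{0})$ engendrant $V$, et $M\subset <B^{+}V_{0}>$.  En particulier $M^{U_{0}}\subset <B^{+}V_{0}>^{U_{0}}$, et il suffit donc de montrer la finitude de ce dernier $o$-module.

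Deuxi\`emement, pour tout $v\in <B^{+}V_{0}>^{U_{0}}$ je v\'erifie les hypoth\`eses de la proposition \ref{5}: la sous-repr\'esentation $V(v):=<Gv>$ de $V$ est admissible, car $V$ est admissible, $o$ est noeth\'erien, et toute sous-repr\'esentation d'une repr\'esentation admissible sur un anneau noeth\'erien est admissible (le $o$-module $V(v)^{H}\subset V^{H}$ est de type fini pour tout sous-groupe ouvert compact $H$). La proposition \ref{5} donne alors $v\in \Delta_{V}(V_{0})$, d'o\`u l'inclusion $<B^{+}V_{0}>^{U_{0}}\subset \Delta_{V}(V_{0})$.

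Troisi\`emement, comme la pr\'esentation $\ind_{KZ}^{G}(V_{0})\to V$ est finie, $R_{V}(V_{0})$ est de type fini; la proposition \ref{1} b) donne donc $\Delta_{V}(V_{0})\in {\cal M}_{o}^{tf}(B_{0}Z)$. Puisque $V$ est admissible, donc $Z$-localement fini, et de torsion, $\Delta_{V}(V_{0})$ partage ces propri\'et\'es. Le lemme \ref{le3} 2) entra\^{\i}ne alors que $\Delta_{V}(V_{0})$ est un $o$-module fini, d'o\`u la finitude de $M^{U_{0}}$.

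Le point le plus d\'elicat est la v\'erification de l'hypoth\`ese d'admissibilit\'e sur $V(v)$ requise par la proposition \ref{5}; une fois cette h\'er\'edit\'e obtenue gr\^ace \`a la noeth\'erianit\'e de $o$, l'argument se r\'eduit m\'ecaniquement \`a une combinaison des r\'esultats pr\'ec\'edents.
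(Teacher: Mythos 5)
Votre d\'emonstration est correcte et suit essentiellement le m\^eme chemin que l'article, qui se contente de renvoyer laconiquement \`a la proposition \ref{5}, au lemme \ref{6} et au th\'eor\`eme \ref{tech} b). Votre seule valeur ajout\'ee est d'expliciter le dernier renvoi : vous remplacez l'invocation du th\'eor\`eme \ref{tech} b) dans le cas admissible (o\`u les \'enonc\'es valent modulo les $o$-modules finis, affirmation que l'introduction ne d\'emontre pas formellement) par la cha\^{\i}ne proposition \ref{1} b) (qui donne $\Delta_{V}(V_{0})\in {\cal M}_o^{tf}(B_0Z)$) puis lemme \ref{le3} 2) (qui, gr\^ace \`a la $Z$-locale finitude h\'erit\'ee de l'admissibilit\'e de $V$, conclut \`a la finitude), ce qui est pr\'ecis\'ement le contenu implicite du renvoi de l'article.
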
 

 \begin{proof}  Proposition \ref{5}, Lemme \ref{6}, Th\'eor\`eme \ref{tech} b).

 \end{proof} 
 
 \begin{corollary}  \label{aut} $D_{V}(V_{0})^{*}$ est un $\Lambda_{o}(U_{0})$-module de type fini pour tout  $V_{0}\in {\cal M}_{o}(B_{0}Z)$ de type fini contenu dans $V|_{B_{0}Z}$ et g\'en\'erateur.
  \end{corollary}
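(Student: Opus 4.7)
The plan is to deduce the corollary directly from the just-proved Theorem \ref{aut} combined with the earlier Proposition \ref{mtf}. The corollary sits in a clean ``plug-in'' position: Theorem \ref{aut} produces the finiteness of $U_{0}$-invariants on any finitely generated torsion $B^{+}$-submodule, and Proposition \ref{mtf} is tailored precisely to turn that kind of finiteness into a finite-type conclusion over $\Lambda_{o}(U_{0})$ via Pontryagin duality and the topological Nakayama lemma (Proposition \ref{noeth}).

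Concretely, I would proceed as follows. First, observe that $V$ is admissible of finite presentation as a $G$-representation, and so $V|_{B}$ is of finite presentation by the proposition relating presentations with respect to $KZ$ and $B_{0}Z$; moreover admissibility of $V$ forces $V|_{B}$ to be $Z$-locally finite. Thus $V|_{B}$ satisfies the standing hypotheses of Proposition \ref{mtf}. Next, set
\[
M \;:=\; \langle B^{+}V_{0}\rangle\ \subset\ V|_{B^{+}}.
\]
Since $V_{0}$ is finitely generated over $B_{0}Z$, the submodule $M$ is finitely generated over the monoid $B^{+}$, and it is torsion because $V$ is. Since $V_{0}$ generates $V$ and $V = \langle B^{+}V_{0}\rangle + \langle (B-B^{+})V_{0}\rangle$ with the second piece obtained from $M$ by translating back under $G = B^{+}K \cup \omega B^{+}K$, one sees that $M$ already generates $V$ as a $G$-representation, using that $G$ is generated by $B^{+}$ together with $K$ and $\omega$.

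Now apply Theorem \ref{aut} to $M$: its $U_{0}$-invariants $M^{U_{0}}$ form a \emph{finite} $o$-module, hence a fortiori $M^{U_{0},\,p_{L}=0}$ is finite. This is exactly the hypothesis required by Proposition \ref{mtf}, which then yields that $D_{V}(V_{0})^{*}$ is a $\Lambda_{o}(U_{0})$-module of finite type.

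There is essentially no obstacle left, since all the heavy lifting — the finiteness of $U_{0}$-invariants of arbitrary finitely generated $B^{+}$-submodules (itself resting on Hu's analysis transported through Lemma \ref{Am} and Proposition \ref{5}) — has been done in Theorem \ref{aut}. The only point requiring minor care is that the $V_{0}$ appearing in the corollary is a generic finitely generated generating $B_{0}Z$-submodule (not assumed to come from a specified finite presentation), but this causes no trouble: $D_{V}(V_{0})$ does not depend on the choice of $V_{0}$ modulo ${\cal M}_{o}^{tf}(B_{0}Z)$ by Proposition \ref{1}, and the finite-type property over $\Lambda_{o}(U_{0})$ is invariant under modification by a finite submodule in view of Proposition \ref{noeth}.
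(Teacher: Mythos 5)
Your proof is correct and follows essentially the same route as the paper: the paper's proof is simply the citation ``Proposition \ref{mtf}'', with the implicit understanding that Theorem \ref{aut} (proved immediately before) furnishes the required hypothesis that $M=\langle B^{+}V_{0}\rangle$ has finite $U_{0}$-invariants, and hence finite $M^{U_{0},p_{L}=0}$. You have merely spelled out the steps left implicit in that one-line proof, including the (correct) observation via Proposition \ref{1} that finite-type over $\Lambda_{o}(U_{0})$ is insensitive to the choice of finitely generated generating $V_{0}$, so nothing new is added and nothing is missing.
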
 
 \begin{proof} Proposition \ref{mtf}.
 \end{proof} 
 
\begin{corollary} L'ensemble ${\cal P}^{+}(V)$ des $M \in  {\cal M}_{o-tor}(B^{+})$ contenus dans $V|_{B^{+}}$   et engendrant $V$ a un unique \'el\'ement minimal $V^{+}$.
\end{corollary}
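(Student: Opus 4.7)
My plan is to realize $V^{+}$ as the minimum of the subfamily $\mathcal{P}^{+}_{fg}(V) \subset \mathcal{P}^{+}(V)$ of those elements which are finitely generated as $o[B^{+}]$-modules, using Corollary \ref{aut} together with the noetherianity of the Iwasawa algebra $\Lambda_{o}(U_{0})$.

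First I would note that $\mathcal{P}^{+}_{fg}(V)$ is nonempty and that every $M \in \mathcal{P}^{+}(V)$ contains an element of $\mathcal{P}^{+}_{fg}(V)$: since $V$ is of finite type as a $G$-representation (being admissible of finite presentation, by Theorem \ref{lf}), I may pick a finite subset $S \subset M$ generating $V$ as a $G$-module, and then $\langle B^{+} S\rangle \subset M$ is a finitely generated $o[B^{+}]$-subrepresentation of $V$ which still generates $V$, hence lies in $\mathcal{P}^{+}_{fg}(V)$. Consequently, a minimum of $\mathcal{P}^{+}_{fg}(V)$ is automatically a minimum of all of $\mathcal{P}^{+}(V)$.

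Next, fix $M_{0} \in \mathcal{P}^{+}_{fg}(V)$. By Corollary \ref{aut}, $M_{0}^{*} = \Hom_{o}^{cont}(M_{0}, L/o)$ is a finitely generated $\Lambda_{o}(U_{0})$-module; since $U_{0} \simeq O_{F}$ is a uniform pro-$p$ group, $\Lambda_{o}(U_{0})$ is noetherian, so $M_{0}^{*}$ is a noetherian $\Lambda_{o}(U_{0})$-module. Via Pontryagin duality, any decreasing chain $M_{0} \supset M_{1} \supset M_{2} \supset \cdots$ of $U_{0}$-subrepresentations of $M_{0}$ corresponds to an ascending chain of $\Lambda_{o}(U_{0})$-kernels in $M_{0}^{*}$, which must stabilize; hence the chain $(M_{i})$ itself stabilizes. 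Applied to decreasing chains inside $\{M \in \mathcal{P}^{+}_{fg}(V) : M \subset M_{0}\}$, this shows that every such chain terminates, so Zorn's lemma yields a minimal element $V^{+}$ of this subfamily, which by the previous paragraph is in fact minimal in all of $\mathcal{P}^{+}(V)$.

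For uniqueness, given two minimal elements $V^{+}$ and $V'^{+}$ of $\mathcal{P}^{+}(V)$, their sum $V^{+} + V'^{+}$ lies in $\mathcal{P}^{+}_{fg}(V)$, and repeating the noetherian stabilization argument inside $V^{+} + V'^{+}$ produces a unique minimum below it, which must coincide with both $V^{+}$ and $V'^{+}$. The main obstacle here is the stabilization step: without Corollary \ref{aut} (and the underlying Theorem \ref{aut}) the intersection of a decreasing chain of generating $B^{+}$-submodules need not generate $V$, and indeed the statement would fail. It is precisely the finiteness of $M^{U_{0}}$ for finitely generated $M \in \mathcal{P}^{+}(V)$, combined with the noetherianity of $\Lambda_{o}(U_{0})$, that forces decreasing chains in $\mathcal{P}^{+}_{fg}(V)$ contained in a fixed $M_{0}$ to terminate, and thereby produces and pins down the canonical minimum $V^{+}$.
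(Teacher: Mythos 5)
Your existence argument is sound: reducing to the subfamily $\mathcal{P}^{+}_{fg}(V)$, identifying $M_{0}^{*}$ as a noetherian $\Lambda_{o}(U_{0})$-module, and using Pontryagin duality to turn descending chains into ascending chains of kernels all works. (Incidentally, the detour through $\mathcal{P}^{+}_{fg}(V)$ and Corollary \ref{aut} is not strictly necessary: since $V$ is admissible and killed by a power of $p_{L}$, any $M\in\mathcal{P}^{+}(V)$ already satisfies $M^{U_{0},p_{L}=0}\subset V^{U_{0},p_{L}=0}$ finite, so $M^{*}$ is a finitely generated $\Lambda_{o}(U_{0})$-module by Proposition \ref{noeth}, and every descending chain in $\mathcal{P}^{+}(V)$ stabilizes directly.)

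The gap is in the uniqueness step. You assert that ``repeating the noetherian stabilization argument inside $V^{+}+V'^{+}$ produces a unique minimum below it,'' but the descending chain condition only guarantees the \emph{existence} of minimal elements, never their uniqueness; and indeed $V^{+}$ and $V'^{+}$ are both minimal in $\{M\in\mathcal{P}^{+}(V): M\subset V^{+}+V'^{+}\}$, so the argument is circular. The ingredient you are missing --- and which the paper explicitly invokes via \cite{SVig} Lemma 2.2 --- is that $\mathcal{P}^{+}(V)$ is \emph{stable under intersection}: if $M,M'$ are $B^{+}$-stable subsets of $V$ each generating $V$ over $G$, then $M\cap M'$ still generates $V$. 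Granting this, uniqueness is immediate ($V^{+}\cap V'^{+}\in\mathcal{P}^{+}(V)$ lies below both minimal elements, forcing $V^{+}=V^{+}\cap V'^{+}=V'^{+}$), and in fact intersection-stability combined with the descending chain condition shows that the intersection of \emph{all} elements of $\mathcal{P}^{+}(V)$ already belongs to $\mathcal{P}^{+}(V)$, which is precisely how the paper defines $V^{+}$. Without that lemma, noetherianity alone cannot pin down a unique minimum.
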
 

 \begin{proof} Comparer avec \cite{SVig} Proposition 9.9.  Comme $\Lambda_{o}(U_{0})$ est noetherien,  par dualit\'e de Pontryagin, toute suite d\'ecroissante dans ${\cal P}^{+}(V)$ est stationnaire.
 L'ensemble  ${\cal P}^{+}(V)$ est stable par intersection \cite{SVig} Lemma 2.2. Donc l'intersection $V^{+}$ des \'el\'ements de ${\cal P}^{+}(V)$ appartient \`a  ${\cal P}^{+}(V)$. 
 \end{proof}

\begin{lemma}
Soit $f: V_{1} \to V_{2}$ un morphisme dans  $ {\cal M}_{o-tor}^{apf}(G)$. Alors $f(V_{1}^{+}) \subset V_{2}^{+}$.
\end{lemma}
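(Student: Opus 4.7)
The plan is to factor $f$ as a surjection followed by an injection, reducing to two sub-cases which I handle via the minimality characterization of $V^+$ as the intersection of ${\cal P}^+(V)$. First, write $f = i \circ q$ where $q : V_1 \twoheadrightarrow f(V_1)$ is the canonical surjection and $i : f(V_1) \hookrightarrow V_2$ is the inclusion; by Theorem \ref{lf}, $f(V_1) \in {\cal M}_{o-tor}^{apf}(G)$, so $f(V_1)^+$ is defined. It therefore suffices to establish two sub-claims: (S) for a surjection $q : V_1 \twoheadrightarrow V_2$ in ${\cal M}_{o-tor}^{apf}(G)$, one has $q(V_1^+) \subset V_2^+$; and (I) for an inclusion $V_1 \subset V_2$ in ${\cal M}_{o-tor}^{apf}(G)$, one has $V_1^+ \subset V_2^+$.

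For (S), note first that $q(V_1^+)$ is a $B^+$-submodule of $V_2$ that generates $V_2$ as a $G$-module (since $V_1^+$ generates $V_1$ and $q$ is surjective), so $q(V_1^+) \in {\cal P}^+(V_2)$, and by minimality $V_2^+ \subset q(V_1^+)$. For the reverse inclusion I would set $N := V_1^+ \cap q^{-1}(V_2^+)$, a $B^+$-submodule of $V_1$. Then $q(N) = V_2^+ \cap q(V_1^+) = V_2^+$ generates $V_2$ over $G$, so $<G \cdot N> + \ker q = V_1$. If one can show $N \in {\cal P}^+(V_1)$, i.e.\ that $N$ itself generates $V_1$ over $G$, then the minimality of $V_1^+$ forces $V_1^+ \subset N \subset q^{-1}(V_2^+)$, hence $q(V_1^+) \subset V_2^+$.

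For (I), set $M := V_2^+ \cap V_1$; this is a $B^+$-submodule of $V_1$, and if $M \in {\cal P}^+(V_1)$ then minimality of $V_1^+$ yields $V_1^+ \subset M \subset V_2^+$.

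The main obstacle in both cases is precisely the generation condition: one must verify that the intersection with $V_1$ still generates $V_1$ as a $G$-module. I would proceed by induction on the length of $V_1$, which is finite by Theorem \ref{lf}. In the base case of irreducible $V_1$, it suffices to show non-vanishing ($N \neq 0$ for (S), $M \neq 0$ for (I)), after which irreducibility forces $<G \cdot N> = V_1$, respectively $<G \cdot M> = V_1$. Non-vanishing follows from Proposition \ref{4'} (any nonzero smooth torsion representation has nonzero $I_1$-invariants) together with the fact that $V^+$ must contain the canonical admissible substructure of $V$, via Proposition \ref{5} and the Hu diagrams of Theorem \ref{Hu}. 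For the inductive step, I would pick a nonzero irreducible subrepresentation $W \subset V_1$, apply the induction hypothesis to both $W$ and $V_1/W$ (both in ${\cal M}_{o-tor}^{apf}(G)$ by the Serre property in Theorem \ref{lf}), and recombine using Pontryagin duality together with the finite generation of $D_V(V_0)^*$ over $\Lambda_o(U_0)$ established in Corollary \ref{aut}, which ensures that the relevant descending chains of $B^+$-submodules are stationary and that $+$ behaves compatibly with short exact sequences.
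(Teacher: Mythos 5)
Your high-level strategy matches the paper's: factor $f$ through its image, reduce to the surjection case and the inclusion case, and use the minimality characterization of $V^+$ as the unique minimal element of ${\cal P}^+(\cdot)$. The paper closes both cases in one line by showing $f^{-1}(V_2^+) \in {\cal P}^+(V_1)$ (citing \cite{SVig}, Lemma~2.3 for a surjection and Lemma~2.1 for an inclusion), whence $V_1^+ \subset f^{-1}(V_2^+)$ and $f(V_1^+) \subset V_2^+$. You never actually close either sub-case, and there are genuine gaps.

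For the surjection, your detour through $N := V_1^+ \cap q^{-1}(V_2^+)$ is unnecessary and is left hanging at ``if one can show $N \in {\cal P}^+(V_1)$.'' The direct route is shorter and does close: $q^{-1}(V_2^+)$ is a $B^+$-submodule (preimage of one under a $B^+$-equivariant map) which contains $\ker q$, and its image generates $V_2$; since $\ker q$ is $G$-stable, $\langle G\, q^{-1}(V_2^+)\rangle$ contains $\ker q$ and surjects onto $V_2$, hence equals $V_1$. So $q^{-1}(V_2^+) \in {\cal P}^+(V_1)$, minimality gives $V_1^+ \subset q^{-1}(V_2^+)$, and $q(V_1^+) \subset V_2^+$. (Your opening observation $V_2^+ \subset q(V_1^+)$ is correct but is the wrong-direction inclusion and plays no role.)

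For the inclusion, the entire content is the assertion that $M := V_2^+ \cap V_1$ generates $V_1$ over $G$ — this is exactly \cite{SVig} Lemma~2.1, which the paper invokes rather than reproves. Your proposed replacement by induction on length does not work as written: the base case appeal to Proposition~\ref{4'} only gives $M^{I_1,p_L=0}\neq 0$ provided you already know $M\neq 0$, which you do not establish, and the cited Propositions \ref{5}, \ref{Hu} concern $\Delta_V(V_0)$ and $\langle B^+V_0\rangle^{U_0}$, not the intersection $V_2^+\cap V_1$; meanwhile the inductive step explicitly invokes ``$+$ behaves compatibly with short exact sequences,'' which is precisely the functoriality you are in the middle of proving, so the argument is circular. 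You should either import the two lemmas from \cite{SVig} as the paper does, or supply an actual proof that $V_2^+\cap V_1$ generates $V_1$; as it stands, this step is missing.
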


\begin{proof} On factorise le morphisme $f: V_{1} \to V_{1}/Ker f \to f(V_{1}) \to V_{2}$ dans la cat\'egorie ab\'eliene  ${\cal M}_{o-tor}^{apf}(G)$.  On est ramen\'e au cas o\`u $f$ est surjectif ou une inclusion.
Si $f$ est surjective ou une inclusion, alors $f^{-1}(V_{2}^{+}) \subset  {\cal P}^{+}(V_{1})$  (\cite{SVig} Lemma 2.3  pour une surjection et Lemma 2.1 pour une inclusion) donc $f(V_{1}^{+} ) \subset V_{2}^{+}$. 
\end{proof}
L'application $V\mapsto V^{+}$ d\'efinit donc un foncteur de   ${\cal M}_{o-tor}^{apf}(G)$ vers   ${\cal M}_{o-tor}(B^{+})$.
\begin{remark}
 Les travaux de Hu  le sugg\'erent de se poser la question suivante : Existe-t-il un plongement  canonique de    ${\cal M}_{o-tor}^{apf}(G)$ dans  la cat\'egorie des diagrammes  ? Il n'est pas difficile de construire des foncteurs de    ${\cal M}_{o-tor}^{apf}(G)$ dans  la cat\'egorie des diagrammes. 
 Par exemple, remplacer ${\cal P}^{+}(V)$ par le sous-ensemble ${\cal P}_{I}^{+}(V)$ de ses \'el\'ements $I$-invariants. Cet ensemble a aussi un \'el\'ement minimal $V^{+}_{I}$ et $d_{V}:=V^{+}_{I}  \cap \w V^{+}_{I}$ est stable par $IZ \cap \w IZ$. On a donc le diagramme $$d_{V}\quad  \subset \quad <Kd_{V}> \ \quad $$Sa construction  est fonctorielle.
 \end{remark}

 Nous supposons dor\'enavant que $F$ est une extension finie de $\bf Q_{p}$. On a d\'efini au th\'eor\`eme \ref{3},
 un foncteur contravariant 
 $V\mapsto {\cal D}(V)$  de la cat\'egorie des repr\'esentations admissibles  de  pr\'esentation finie dans $ {\cal M}_{o-tor}(G)$  vers la cat\'egorie des $(\varphi, \Gamma)$-modules \'etales sur $O_{\cal E}$.
 On d\'eduit de la partie 2) du corollaire \ref{aut} que ${\cal D}(V)$ est un $O_{\cal E}$-module de type fini. On note ${\cal V}(V) \in   {\cal M}_{o}^{f}(\Gal _{\bf Q_{p}})$ la repr\'esentation galosienne associ\'ee par l'\'equivalence de cat\'egories de Fontaine.
 
 \begin{corollary}\label{cor5} Supposons que $F$ est une extension finie de $\bf Q_{p}$. Alors 
 $\cal V$ est  un foncteur contravariant de $ {\cal M}_{o-tor}^{apf}(G)$  vers $ {\cal M}_{o}^{f}(\Gal _{\bf Q_{p}})$.
 \end{corollary}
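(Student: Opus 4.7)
The plan is to observe that essentially all of the substantive work has been done in the preceding sections, and the corollary is a packaging statement that combines Theorem~\ref{th3}, Corollary~\ref{aut}, and Fontaine's equivalence of categories. First, I would verify that any $V \in {\cal M}_{o-tor}^{apf}(G)$ satisfies the hypotheses of Theorem~\ref{th3}: by the proposition relating $G$- and $B$-presentations, $V|_B$ has finite presentation, and admissibility forces $Z$-local finiteness. Theorem~\ref{th3} then produces an étale $(\varphi,\Gamma)$-module
$${\cal D}(V) := {\cal O}_{\cal E}\otimes_{\Lambda(U_0)} D_V(V_0)^*$$
over ${\cal O}_{\cal E}$, independent of the choice of finite presentation.

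Second, I would invoke Corollary~\ref{aut} to conclude that $D_V(V_0)^*$ is finitely generated over $\Lambda_o(U_0)$. Since finite generation of modules is preserved under base change along the ring map $\Lambda_o(U_0) \to {\cal O}_{\cal E}$ (obtained from the trace $O_F \to \bf Z_p$), ${\cal D}(V)$ is finitely generated over ${\cal O}_{\cal E}$. Thus ${\cal D}(V)$ is an étale $(\varphi,\Gamma)$-module of finite type over ${\cal O}_{\cal E}$.

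Third, I would apply Fontaine's equivalence between the category of étale $(\varphi,\Gamma)$-modules of finite type over ${\cal O}_{\cal E}$ and ${\cal M}_o^f(\Gal_{\bf Q_p})$. Defining ${\cal V}(V)$ to be the Galois representation corresponding to ${\cal D}(V)$ under this equivalence yields a contravariant assignment ${\cal V}: {\cal M}_{o-tor}^{apf}(G) \to {\cal M}_o^f(\Gal_{\bf Q_p})$. Contravariance and functoriality on morphisms are inherited from the (contravariant) functor ${\cal D}$ of Theorem~\ref{th3} via the (covariant) Fontaine equivalence.

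There is no real obstacle; the only minor point to check is functoriality on morphisms. Given $f: V_1 \to V_2$ in ${\cal M}_{o-tor}^{apf}(G)$, one restricts to $B$ and uses Corollary~\ref{cor1} to pick compatible finite presentations so that $f$ induces a morphism $D_{V_2}(V_0^{(2)}) \to D_{V_1}(V_0^{(1)})$ modulo ${\cal M}_o^{tf}(B_0Z)$, hence a well-defined map ${\cal D}(V_2) \to {\cal D}(V_1)$ after tensoring with ${\cal O}_{\cal E}$ (which kills the ``error terms'' in ${\cal M}_o^{tf}(B_0Z)$), and finally ${\cal V}(V_1) \to {\cal V}(V_2)$ after Fontaine.
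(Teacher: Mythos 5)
Your proposal matches the paper's argument: the corollary is indeed just the packaging of Theorem~\ref{th3} (which gives the étale $(\varphi,\Gamma)$-module ${\cal D}(V)$ independent of the finite presentation), Corollary~\ref{aut} (which gives finite generation of $D_V(V_0)^*$ over $\Lambda_o(U_0)$, hence of ${\cal D}(V)$ over ${\cal O}_{\cal E}$ after base change), and Fontaine's equivalence. You also correctly observe that admissibility ensures $Z$-local finiteness, and that the $B$-restriction of a finitely presented $G$-representation is finitely presented, so the hypotheses of Theorem~\ref{th3} are met.

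One small bookkeeping slip in your functoriality paragraph: since $D_V(V_0)$ is built from the sub-object $\langle B^+ V_0\rangle\subset V$, the construction $V\mapsto D_V(V_0)$ is \emph{covariant}, so $f\colon V_1\to V_2$ induces $D_{V_1}(V_0^{(1)})\to D_{V_2}(V_0^{(2)})$ modulo ${\cal M}_o^{tf}(B_0Z)$ (here Proposition~\ref{11} is the relevant compatibility statement), and it is the Pontryagin dual $D_V(V_0)^*$ that reverses the arrow, giving ${\cal D}(V_2)\to{\cal D}(V_1)$ and finally ${\cal V}(V_2)\to{\cal V}(V_1)$. You wrote some of these arrows in the opposite direction, but this does not affect the substance; the conclusion (contravariance of $\cal V$) is as you state, and the approach is the same as the paper's.
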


 Lorsque $F= \bf Q_{p}$, 
chaque repr\'esentation irr\'eductible admissible  $V\in {\cal M}_{o}(G)$ a une pr\'esentation finie (\cite{Serre80}; il y a plusieurs d\'emonstrations, la premi\`ere due \`a Colmez).
On d\'eduit de la proposition \ref{suite}, du lemme \ref{ext}, et du th\'eor\`eme \ref{lf} que 
${\cal M}_{o-tor}^{alf}(G) = {\cal M}_{o-tor}^{apf}(G)$. Le foncteur ${\cal D}(V)$ est exact car le probl\`eme pour l'exactitude vient seulement de la trace de $O_{F}$ \`a $\bf Z_{p}$.  
Ceci termine la preuve du th\'eor\`eme \ref{0} donn\'e dans l'introduction.

\end{document}